\newtheorem{proposition}{Proposition}[section]
\newtheorem{theorem}[proposition]{Theorem}
\newtheorem{lemma}[proposition]{Lemma}
\newtheorem{corollary}[proposition]{Corollary}
\theoremstyle{definition}
\newtheorem{remark}[proposition]{Remark}
\newtheorem{definition}[proposition]{Definition}
\newtheorem{example}[proposition]{Example}
\newtheorem{question}[proposition]{Question}
\title{Delta invariants of projective bundles and projective cones of Fano type}
\author{Kewei Zhang}
\address{School of Mathematical Sciences, Beijing Normal University, Beijing, 100875, China.}
\email{kwzhang@pku.edu.cn}
\author{Chuyu Zhou}
\address{\'Ecole Polytechnique F\'ed\'erale de Lausanne (EPFL), MA C3 615, Station 8, 1015 Lausanne, Switzerland}
\email{chuyu.zhou@epfl.ch}
\date{} % delete this line to display the current date
\newcommand{\Fut}{{\rm{Fut}}}
\newcommand{\ord}{{\rm {ord}}}
\newcommand{\vol}{{\rm {vol}}}
\newcommand{\Spec}{{\rm {Spec}}}
\newcommand{\Proj}{{\rm{Proj}}}
\newcommand{\Pic}{{\rm {Pic}}}
\newcommand{\Cl}{{\rm {Cl}}}
\newcommand{\dt}{{\rm {dt}}}
\newcommand{\idd}{\sqrt{-1}\partial\bar{\partial}}
\newcommand{\Ric}{{\operatorname{Ric}}}
\newcommand{\bA}{\mathbb{A}}
\newcommand{\bC}{\mathbb{C}}
\newcommand{\bN}{\mathbb{N}}
\newcommand{\bP}{\mathbb{P}}
\newcommand{\bQ}{\mathbb{Q}}
\newcommand{\bR}{\mathbb{R}}
\newcommand{\bZ}{\mathbb{Z}}
\newcommand{\mD}{\mathcal{D}}
\newcommand{\mL}{\mathcal{L}}
\newcommand{\mO}{\mathcal{O}}
\newcommand{\mS}{\mathcal{S}}
\newcommand{\mV}{\mathcal{V}}
\newcommand{\tF}{\tilde{F}}
\newcommand{\tY}{\tilde{Y}}
\newcommand{\tX}{\tilde{X}}
\newcommand{\tV}{\tilde{V}}
\begin{document}

\begin{abstract}
In this paper, we will give a precise formula to compute delta invariants of projective bundles and projective cones of Fano type.
\end{abstract}

\maketitle
\tableofcontents

\section{Introduction}

\subsection{Motivation}

Given an arbitrary Fano manifold $V$, it is often the case that $V$ does not admit any K\"ahler--Einstein (KE) metric. But still, $V$ always admits twisted KE or conical KE metrics. To study these metrics and their degenerations, some analytic and algebraic thresholds play important roles. For instance,
the greatest Ricci lower bound $\beta(V)$ of Tian \cite{Tia92} measures how far $V$ is away from a KE manifold. As shown in \cite{BBJ18,CRZ19}, $\beta(V)$ is equal to the algebraic $\delta$-invariant introduced in \cite{FO18,BJ20}, which serves as the right threshold for $V$ to be Ding-stable (cf. \cite{BoJ18,BBJ18,BLZ19}).

So an important problem in algebraic geometry is to compute the $\delta$-invariants of Fano varieties. Although some efforts to tackle this problem have been made in low dimensions (see e.g. \cite{PW18,CRZ19,CZ19}), overall this is still a highly nontrivial problem. However as we shall see in this paper, when the variety enjoys certain symmetry, the difficulty could be substantially reduced.

More precisely, we will investigate projective bundles and projective cones of Fano type. These objects enjoy a natural $\bC^*$-action in the fiber direction. On the analytic side, this torus action allows us to carry out the momentum construction due to Calabi, using which we can control the greatest Ricci lower bound from below. On the algebraic side, by using this torus action we can easily bound delta invariant from above. Surprisingly, the bounds coming from these two directions coincide and hence give us the precise formula for the $\delta$-invariant. This generalizes the example considered in \cite[Section 3.1]{Sze11}. More generally we will also consider the (singular) log Fano setting and derive formulas for the corresponding $\delta$-invariants. In the log case, one can still carry out the Calabi trick as in \cite{LL19}. But we will take a purely algebraic approach, again making use of the $\bC^*$-action. Indeed, by \cite{BJ20,Golota19}, to compute $\delta$, it is enough to consider $\bC^*$-invariant divisorial valuations, which can greatly simplify the computation.

\subsection{Main results}
We work over complex number $\bC$.
We will first deal with the smooth projective bundles of Fano type, and then consider the more general log Fano case.

Let $V$ be an $n$-dimensional Fano manifold with Fano index $I(V)\geq2$. So we can find an ample line bundle $L$ such that
\begin{equation}
\label{eq:L=-1/rK_V}
L=-\frac{1}{r} K_V\text{ for some rational number $r>1$.}
\end{equation}
We put
$$
\tilde{Y}:=\bP_V(L^{-1}\oplus\mathcal{O}_V)\xrightarrow{p}V,
$$
which is the $\bP^1$-bundle over $V$ with respect to $L^{-1}\oplus \mO_V$.
Let $V_0$ denote the zero section and $V_\infty$ the infinity section of $\tilde{Y}$. Then
$$
-K_{\tilde{Y}}=p^*(-K_V)+V_0+V_\infty\sim_\bQ(r+1)V_\infty-(r-1)V_0
$$
is ample and hence $\tilde{Y}$ is an $(n+1)$-dimensional Fano manifold. We put
\begin{equation}
\label{eq:def-beta_0}
\beta_0:=\bigg(\frac{n+1}{n+2}\cdot\frac{(r+1)^{n+2}-(r-1)^{n+2}}{(r+1)^{n+1}-(r-1)^{n+1}}-(r-1)\bigg)^{-1}.
\end{equation}
Using binomial formula, one can easily verify the following elementary fact:
\begin{equation}
\label{eq:beta_0<1}
\beta_0\in(1/2,1).
\end{equation}
Our first main result is stated as follows.
\begin{theorem}\label{smoothdeltabundle}
	\label{thm:delta-Y-lower-upper-bound}
	One has
	\begin{align}
	    \delta(\tilde{Y})=\min\bigg\{\frac{\delta(V)r\beta_0}{1+\beta_0(r-1)},\beta_0\bigg\}.
	\end{align}
\end{theorem}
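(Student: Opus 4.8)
The plan is to bound $\delta(\tilde Y)$ from above and below by the same quantity. The natural $\bC^*$-action on $\tilde Y$ in the fiber direction fixes the two sections $V_0$ and $V_\infty$, and by \cite{BJ20,Golota19} it suffices to test $\delta$ against $\bC^*$-invariant divisorial valuations. Any such valuation over $\tilde Y$ either lies over a point where the $\bC^*$-action is ``transverse'' (i.e. is pulled back, in a suitable sense, from a valuation on the base $V$) or is supported on the invariant divisors $V_0$, $V_\infty$. This dichotomy is the structural heart of the argument: one shows that for the purpose of computing $\delta$, it is enough to consider (i) the valuation $\ord_{V_\infty}$ (equivalently $\ord_{V_0}$), and (ii) valuations of the form ``$\ord_F$ on $V$ composed with the projection $p$'', plus possibly a twist by the fiber coordinate. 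Computing $A_{\tilde Y}$ and $S_{\tilde Y}$ for these two families, and taking the infimum of $A/S$, will produce the two expressions $\beta_0$ and $\frac{\delta(V)r\beta_0}{1+\beta_0(r-1)}$ respectively; their minimum is then an upper bound for $\delta(\tilde Y)$.

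First I would set up the intersection theory on $\tilde Y$. Writing $-K_{\tilde Y}\sim_\bQ (r+1)V_\infty-(r-1)V_0$ and using $V_0\cdot V_\infty=0$, $V_\infty|_{V_\infty}=p^*L$, $V_0|_{V_0}=-p^*L$, one computes the volume $\vol(-K_{\tilde Y})$ and, for each $t$, the volume $\vol(-K_{\tilde Y}-tV_\infty)$ as a polynomial in $t$ with coefficients involving $L^n=I(V)^n(-K_V)^n/\text{const}$; integrating this gives $S_{\tilde Y}(V_\infty)$, and since $A_{\tilde Y}(V_\infty)=1$ one recognizes $1/S_{\tilde Y}(V_\infty)=\beta_0$ after simplification — this is exactly where the binomial expression in \eqref{eq:def-beta_0} comes from. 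Next, for a divisor $F$ over $V$ computing (or approximately computing) $\delta(V)$, I would take its natural extension $\tilde F$ to $\tilde Y$ (the strict transform under $p$, or equivalently $F\times \bP^1$), and relate $A_{\tilde Y}(\tilde F)$ and $S_{\tilde Y}(\tilde F)$ to $A_V(F)$ and $S_V(F)$ via the projection formula and adjunction along the fibers; the one-dimensional fiber integral contributes the factors $r$, $\beta_0$ and $(r-1)$ seen in the formula. Combining the two bounds gives $\delta(\tilde Y)\le \min\{\tfrac{\delta(V)r\beta_0}{1+\beta_0(r-1)},\beta_0\}$.

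For the lower bound I would argue that \emph{every} $\bC^*$-invariant divisorial valuation $v$ on $\tilde Y$ has $A_{\tilde Y}(v)/S_{\tilde Y}(v)$ bounded below by the same minimum. The key reduction: by the $\bC^*$-equivariant structure of $\tilde Y\to V$, such a $v$ is determined by its restriction to the generic fiber together with a valuation datum on $V$; concretely one does a base change / Okounkov-body type analysis fibered over $V$, reducing the computation of $S_{\tilde Y}(v)$ to an integral over $V$ of fiberwise $S$-functions on $\bP^1$ weighted against $\vol$ on $V$. Then a convexity/averaging estimate — the same mechanism that makes $\delta$ of a fibration bounded below by $\delta$ of base and fiber interact linearly — shows the ratio $A/S$ for $v$ is at least a convex combination controlled by $\delta(V)$ (from the base direction) and $\beta_0$ (from the fiber $\bP^1$ direction), hence at least the stated minimum. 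I expect the main obstacle to be precisely this step: making rigorous the ``fibered Okounkov body'' computation of $S_{\tilde Y}(v)$ for an \emph{arbitrary} $\bC^*$-invariant $v$ (not just the two extremal families), and checking that the resulting estimate is sharp so that the lower bound matches the upper bound exactly rather than with a loss. The twisting of $v$ by the fiber coordinate — which shifts the relevant linear system and interpolates between the $V_0$ and $V_\infty$ behaviour — is the subtle bookkeeping one must get right, and this is where the specific coefficient $\frac{r\beta_0}{1+\beta_0(r-1)}$ must be seen to emerge naturally from the geometry rather than being imposed.
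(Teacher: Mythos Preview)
Your overall strategy is the paper's: upper bound via the section divisor and via pullbacks $E_{\tilde Y}$ of divisors over $V$; lower bound by restricting to $\bC^*$-invariant valuations via \cite{BJ20,Golota19}. Two points.

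First, a minor correction: $\beta_0 = A_{\tilde Y}(V_0)/S_{\tilde Y}(V_0)$ is computed by the \emph{zero} section, not $V_\infty$; the two are not interchangeable. The ratio for $V_\infty$ is the quantity $\beta'_{0,0}$ in the paper's notation, and the reduction from the three-term minimum of Theorem~\ref{deltabundle} to the two-term formula here uses $\beta_0 \le \beta'_{0,0}$, which is equivalent to $r\le n+1$ and holds because $V$ is a smooth Fano.

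Second, on your ``main obstacle'': the paper resolves it not by a fibered Okounkov body or a convexity estimate, but by an exact linear decomposition. A $\bC^*$-invariant prime divisor $\tilde F \ne V_0$ with center in $V_0$ is, by \cite{BHJ17}, quasimonomial along $V_0$ and $F_{\tilde Y}$ for some prime divisor $F$ over $V$: one has $\ord_{\tilde F} = \ord_{\tilde F}(V_0)\cdot \ord_{V_0} + c\cdot \ord_{F_{\tilde Y}}$ as valuations. Then $A_{\tilde Y}$ is linear in these weights by \cite{JM12}, and---this is the step your sketch was circling---so is $S_{\tilde Y}$: the $\bC^*$-weight decomposition $H^0(\tilde Y,-mK_{\tilde Y}) = \bigoplus_j \tilde R_{m,j}$ with $\tilde R_{m,j}\cong H^0(V,(m(r-1)+j)L)$ makes the filtration by $\ord_{\tilde F}$ compatible with the grading, so $S_m(\tilde F)$ splits as the \emph{same} linear combination of $S_m(V_0)$ and $S_m(F_{\tilde Y})$ with no loss. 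Hence $A_{\tilde Y}(\tilde F)/S_{\tilde Y}(\tilde F)$ is a ratio of matching positive linear combinations and therefore at least $\min\{\beta_0,\, \delta_{\tilde Y}(\ord_{F_{\tilde Y}})\}$; the analogous statement holds with $V_\infty$ in place of $V_0$. Your convexity/averaging heuristic would only give an inequality for $S$, risking a gap between the upper and lower bounds; the point is that here one has equality, and that is precisely what makes the lower bound sharp.
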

In particular, $\tilde{Y}$ cannot admit KE metrics.
% \footnote{By \cite{Ko}, $\tilde{Y}$ always admits a K\"ahler Ricci soliton metric.}.
But as we shall see in Section \ref{sec:calabi}, $\tilde{Y}$ does admit a family of twisted conical KE metrics.
When $\delta(V)\geq1/r+\beta_0(1-1/r)$ (this holds for example when $V$ is K-semistable), we deduce that
\begin{equation}
    \delta(V)=\beta_0.
\end{equation}
As we will show, in this case $V_0$ computes $\delta(\tilde{Y})$.
This generalizes the example $\tilde{Y}=Bl_1\bP^2$ treated in \cite{Sze11}. Indeed, when 
$\tilde{Y}=Bl_1\bP^2$, one has $V=\bP^1$, $n=1$ and $r=2$, so that $\delta(\tilde{Y})=\beta_0=6/7$, which agrees with the result obtained in \cite{Sze11,Li11}. In the case of $\delta(V)\leq1/r+\beta_0(1-1/r)$, Theorem \ref{thm:delta-Y-lower-upper-bound} gives
\begin{equation}
    \label{eq:delta-Y=delta-x}
    	\delta(\tilde{Y})=\frac{\delta(V)r\beta_0}{1+\beta_0(r-1)}.
\end{equation}
In this case, there always exists a prime divisor $F$ over $V$ computing $\delta(V)$ (see \cite[Theorem 6.7]{BLZ19}). This divisor naturally induces a divisor $\overline{F}$ over $\tilde{Y}$, and we will show that $\delta(\tilde{Y})$ is computed by $\overline{F}$.

% See Section \ref{sec:dim4-exam} for an explicit example.

\begin{remark}
    In \cite{zhuang20}, Zhuang computed the $\delta$-invariants of product spaces. In particular, let $\tilde{Y}=V\times\bP^1$ be the trivial $\bP^1$-bundle over $V$, then
$$
\delta(\tilde{Y})=\min\{\delta(V),1\}.
$$
So to some extent, Theorem \ref{thm:delta-Y-lower-upper-bound} generalizes this product formula.
\end{remark}

The smoothness assumption of $\tilde{Y}$ in Theorem \ref{thm:delta-Y-lower-upper-bound} is only used for a simpler exposition of our argument in Section \ref{sec:calabi}. As we now show, one can consider the following more general singular setting.

 Let $V$ be a normal projective variety of dimension $n$ and $L$ an ample line bundle on $V$. We define the affine cone over $V$ associated to $L$ to be
$$X:= \Spec \oplus_{k\in \bN}H^0(V,kL) ,$$
and the projective cone over $X$ associated to $L$ to be 
$$Y:=\Proj \oplus_{k,j\in \bN} H^0(V,kL)s^j.$$
It is clear that both $X$ and $Y$ are normal varieties of dimension $n+1$ (to see $X$ is a normal variety, we first focus on an integrally closed sub-ring $\oplus_{k\in \bN}H^0(V,kmL)$ where $m$ is sufficiently divisible such that $mL$ is very ample, then $\oplus_{k\in \bN}H^0(V,kL)$ is integral over $\oplus_{k\in \bN}H^0(V,kmL)$), and $Y$ is obtained by adding an infinite divisor $V_\infty$ to $X$, where $V_\infty$ is the infinite divisor on $Y$ defined by $s=0$.

We provide another viewpoint of $X$ and $Y$. Let  $p:\tilde Y:=\bP_V(L^{-1}\bigoplus\mO_V)\to V$ be the projective $\bP^1$-bundle over $V$ associated to $L^{-1}\bigoplus\mO_V$.  Let $V_\infty$ and $V_0$ be the infinite and zero sections of $p$  respectively. Then we know
there is a contraction  $\phi:\tilde{Y}\to Y$ which only contracts divisor $V_0$, and $\tilde{Y}\setminus V_0\cong Y\setminus o$, where $o$ is the cone vertex of $X$.

If the base $V$ is a $\bQ$-Fano variety and $L\sim_\bQ -\frac{1}{r}K_V$ is an ample line bundle on $V$ for some positive rational number $r$, then it is not hard to see that $\tY$ (for $r>1$) and $Y$ (for $r>0$) are $\bQ$-Fano varieties. Moreover, both $(\tilde{Y}, aV_0+bV_\infty)$ and $(Y,cV_\infty)$ are log Fano varieties for any rational $0\leq a<1,0\leq b<1,0\leq c<1$ if $r>1$,  and $1-r<a<1, 0\leq b<1, 0\leq c<1$ if $0<r\leq 1$. The main results of this paper is about the computation of delta invariants of these log Fano varieties.

\begin{theorem}\label{deltabundle}
Let V be a $\bQ$-Fano variety of dimension n and $L\sim_\bQ -\frac{1}{r}K_V$ an ample line bundle on V for some positive rational number $r$. Let $a,b$ be rational numbers such that $0\leq a<1,0\leq b<1$ if $r>1$, and $1-r<a<1, 0\leq b<1$ if $0<r\leq 1$. Then we have the following formula computing delta invariants,
$$\delta(\tilde{Y},aV_0+bV_\infty)= \min\bigg\{\frac{r\delta(V)}{\frac{n+1}{n+2}\frac{B^{n+2}-A^{n+2}}{B^{n+1}-A^{n+1}}}, \frac{1-a}{\frac{n+1}{n+2}\frac{B^{n+2}-A^{n+2}}{B^{n+1}-A^{n+1}}-A}, \frac{1-b}{B-\frac{n+1}{n+2}\frac{B^{n+2}-A^{n+2}}{B^{n+1}-A^{n+1}}}\bigg\}, $$
where $A=r-(1-a)$ and $B=r+(1-b)$. In particular, when $V$ is a Fano manifold and $a=b=0$ (in this case,  $r>1$ automatically), we have 
\begin{align*}
\delta(\tilde{Y})= \min\bigg\{\frac{r\delta(V)}{\frac{n+1}{n+2}\frac{(r+1)^{n+2}-(r-1)^{n+2}}{(r+1)^{n+1}-(r-1)^{n+1}}}, \frac{1}{\frac{n+1}{n+2}\frac{(r+1)^{n+2}-(r-1)^{n+2}}{(r+1)^{n+1}-(r-1)^{n+1}}-(r-1)}\bigg\}.
\end{align*}
\end{theorem}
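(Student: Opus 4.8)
The plan is to compute $\delta(\tilde Y,aV_0+bV_\infty)$ by running over all $\bC^*$-invariant divisorial valuations over $\tilde Y$, which is legitimate by \cite{BJ20,Golota19}; here $\bC^*$ acts along the fibres of $p$. Set $\Delta:=aV_0+bV_\infty$ and $T:=\frac{n+1}{n+2}\cdot\frac{B^{n+2}-A^{n+2}}{B^{n+1}-A^{n+1}}$. The first step is a classification: restricting a $\bC^*$-invariant valuation $v$ to $\bC(V)$ yields a valuation $w$ over $V$, and on a local trivialisation $\bC(\tilde Y)=\bC(V)(t)$ with $V_0=\{t=0\}$, $V_\infty=\{t=\infty\}$ one checks that $v\big(\sum_i a_it^i\big)=\min_i\big(w(a_i)+i\,v(t)\big)$ --- the minimum being attained without cancellation because $\gr_v$ inherits the $\bC^*$-grading, so that $\mathrm{in}_v(a_it^i)$ sits in weight $i$ and terms of different $i$ cannot interfere. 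Hence, up to scaling, $v$ is one of: $\ord_{V_0}$; $\ord_{V_\infty}$; or a quasi-monomial valuation $v_{F,\lambda}$ defined on the base change $\tilde Y':=\bP_{V'}(\mu^*L^{-1}\oplus\mO_{V'})\to\tilde Y$ (with $\mu:V'\to V$ a model extracting a prime divisor $F$ over $V$) by putting weight $1$ on $\overline F:=(p')^{*}F$ and weight $\lambda\ge0$ on $V_0$ (when $v(t)\ge0$) or on $V_\infty$ (when $v(t)\le0$); the case $\lambda=0$ gives $v_{F,0}=\overline F$, and there is no type mixing both sections since $V_0\cap V_\infty=\emptyset$.

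Next I would compute the log discrepancies and $S$-invariants of the three types. From $-K_{\tilde Y}-\Delta\sim_\bQ BV_\infty-AV_0=(B-A)V_0+Bp^*L$ and $V_\infty-V_0\sim_\bQ p^*L$, intersection theory on the $\bP^1$-bundle gives $\vol\!\big(-K_{\tilde Y}-\Delta-tV_0\big)=r^{-n}(-K_V)^n\big(B^{n+1}-(A+t)^{n+1}\big)$ for $t\in[0,B-A]$ (the pseudo-effective threshold of $V_0$, forced by intersecting with a fibre of $p$) and $0$ afterwards, and symmetrically for $V_\infty$; integrating, $S_{\tilde Y,\Delta}(V_0)=T-A$ and $S_{\tilde Y,\Delta}(V_\infty)=B-T$, while evidently $A_{\tilde Y,\Delta}(V_0)=1-a$ and $A_{\tilde Y,\Delta}(V_\infty)=1-b$. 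Since $\tilde Y\to V$ is a $\bP^1$-bundle one has $K_{\tilde Y'/\tilde Y}=(p')^{*}(K_{V'}-\mu^*K_V)$, whence $A_{\tilde Y,\Delta}(v_{F,\lambda})=A_V(F)+\lambda(1-a)$ on the $V_0$-side (and $A_V(F)+\lambda(1-b)$ on the $V_\infty$-side). For the $S$-invariant of $v_{F,\lambda}$, the key point is that with respect to an admissible flag on $\tilde Y'$ beginning $V_0'\supset(V_0'\cap\overline F)$ one has $v_{F,\lambda}=\lambda\,\nu_1+\nu_2$, where $(\nu_1,\nu_2,\dots)$ is the associated Okounkov valuation; thus $v_{F,\lambda}$ is the composition of a $\lambda$-independent Okounkov valuation with the nonnegative linear form $\lambda\,\xi_1+\xi_2$, so $S_{\tilde Y,\Delta}(v_{F,\lambda})$ is that form evaluated at the barycenter of the Okounkov body --- in particular exactly linear in $\lambda$:
$$
S_{\tilde Y,\Delta}(v_{F,\lambda})=\lambda\,S_{\tilde Y,\Delta}(V_0)+S_{\tilde Y,\Delta}(\overline F)=\lambda\,(T-A)+\tfrac{T}{r}\,S_V(F),
$$
the value $S_{\tilde Y,\Delta}(\overline F)=\tfrac{T}{r}S_V(F)$ coming from the fibration of this Okounkov body over $[0,B-A]$ with fibre the Okounkov body of $(A+y)\mu^*L$ on $V'$, together with the rescaling $(A+y)\mu^*L\sim_\bQ\tfrac{A+y}{r}\mu^*(-K_V)$.

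Given these formulas the theorem follows. For the lower bound, the classification reduces matters to the three types; since $\big(A_{\tilde Y,\Delta}(v_{F,\lambda}),\,S_{\tilde Y,\Delta}(v_{F,\lambda})\big)=\lambda\big(A_{\tilde Y,\Delta}(V_0),S_{\tilde Y,\Delta}(V_0)\big)+\big(A_{\tilde Y,\Delta}(\overline F),S_{\tilde Y,\Delta}(\overline F)\big)$ (on the $V_0$-side, and likewise with $V_\infty$), the mediant inequality together with $A_V(F)/S_V(F)\ge\delta(V)$ gives $A_{\tilde Y,\Delta}(v_{F,\lambda})/S_{\tilde Y,\Delta}(v_{F,\lambda})\ge\min\{r\delta(V)/T,\ (1-a)/(T-A),\ (1-b)/(B-T)\}$, while $\ord_{V_0},\ord_{V_\infty}$ realise exactly the last two values; hence $\delta(\tilde Y,\Delta)\ge\min\{\cdots\}$. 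The matching upper bound is immediate, testing against $V_0$, $V_\infty$, and $\overline F$ for $F$ a prime divisor computing $\delta(V)$ (which exists by \cite[Theorem~6.7]{BLZ19}). Clearing denominators identifies $(1-a)/(T-A)$ and $(1-b)/(B-T)$ with the second and third terms, so
$$
\delta(\tilde Y,\Delta)=\min\left\{\frac{r\delta(V)}{T},\ \frac{1-a}{T-A},\ \frac{1-b}{B-T}\right\};
$$
and for $a=b=0$ with $V$ smooth (so $r>1$, $A=r-1$, $B=r+1$) the third entry is redundant, since $1/\big((r+1)-T\big)\ge1/\big(T-(r-1)\big)$, i.e. $T\ge r$, which is elementary from the binomial expansion. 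The step I expect to be the main obstacle is the $S$-invariant computation for $v_{F,\lambda}$: correctly identifying it with the linear form $\lambda\xi_1+\xi_2$ on the Okounkov body of $-K_{\tilde Y}-\Delta$ with respect to a suitable admissible flag, and evaluating the two barycentric coordinates $S_{\tilde Y,\Delta}(V_0)=T-A$ and $S_{\tilde Y,\Delta}(\overline F)=\tfrac Tr S_V(F)$; the linearity in $\lambda$ --- on which the mediant argument for the lower bound rests --- then comes for free, and the classification step, though it needs the $\bC^*$-grading remark to rule out cancellation, is comparatively routine.
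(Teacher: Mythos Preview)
Your proposal is correct and follows the same architecture as the paper: reduce to $\bC^*$-invariant divisorial valuations via \cite{BJ20,Golota19}, classify these as quasi-monomial combinations of $\ord_{V_0}$ or $\ord_{V_\infty}$ with the pullback $\ord_{\overline F}$ of a prime divisor $F$ over $V$, establish that both $A_{\tilde Y,\Delta}$ and $S_{\tilde Y,\Delta}$ are linear in this decomposition, and conclude by the mediant inequality. The one technical difference is in how the linearity $S_{\tilde Y,\Delta}(v_{F,\lambda})=\lambda\,S_{\tilde Y,\Delta}(V_0)+S_{\tilde Y,\Delta}(\overline F)$ and the value $S_{\tilde Y,\Delta}(\overline F)=\tfrac{T}{r}S_V(F)$ are obtained: the paper works with $m$-basis type divisors adapted to the weight decomposition $\tilde R_m=\bigoplus_j\tilde R_{m,j}$ along $\ord_{V_0}$ (its Lemmas in Section~\ref{section:bundle}), whereas you package the same computation as a barycenter of an Okounkov body fibred over $[0,B-A]$. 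Both are valid and deliver identical formulas; the filtration argument is perhaps more elementary to write out in full, while your Okounkov picture makes the linearity in $\lambda$ transparent.

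One small slip to fix: for the upper bound $\delta(\tilde Y,\Delta)\le r\delta(V)/T$ you test against $\overline F$ for a prime divisor $F$ \emph{computing} $\delta(V)$, citing \cite[Theorem~6.7]{BLZ19}. That result only guarantees such a minimizer under a hypothesis of the form $\delta(V)\le$ some threshold, so the existence is not unconditional. This is harmless, since your own identity $\delta_{(\tilde Y,\Delta)}(\ord_{\overline F})=\tfrac{r}{T}\,\delta_V(\ord_F)$ holds for \emph{every} $F$; taking the infimum over all $F$ gives $\delta(\tilde Y,\Delta)\le r\delta(V)/T$ directly, with no minimizer needed --- and this is exactly how the paper argues the upper bound.
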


Note that when $V$ is a Fano manifold, we naturally have $r\leq n+1$, e.g \cite[Chapter 5]{Kollar96}, thus one can easily have the comparison
$$\frac{1}{\frac{n+1}{n+2}\frac{(r+1)^{n+2}-(r-1)^{n+2}}{(r+1)^{n+1}-(r-1)^{n+1}}-(r-1)}\leq  \frac{1}{r+1-\frac{n+1}{n+2}\frac{(r+1)^{n+2}-(r-1)^{n+2}}{(r+1)^{n+1}-(r-1)^{n+1}}}.$$
This is precisely what we get in Theorem \ref{smoothdeltabundle}. If we write $\beta_{a,b}=\frac{1-a}{\frac{n+1}{n+2}\frac{B^{n+2}-A^{n+2}}{B^{n+1}-A^{n+1}}-A}$ and $\beta_{a,b}'=\frac{1-b}{B-\frac{n+1}{n+2}\frac{B^{n+2}-A^{n+2}}{B^{n+1}-A^{n+1}}}$, then the above formula can be simplified as 
$$\delta(\tilde{Y},aV_0+bV_\infty)= \min\bigg\{\frac{r\delta(V)}{1-a+A\beta_{a,b}}\beta_{a,b}, \beta_{a,b},\frac{(1-b)\beta_{a,b}}{(B-A)\beta_{a,b}-(1-a)}\bigg\}, $$
$$\delta(\tY)=\min\bigg\{\frac{r\delta(V)\beta_{0,0}}{1+(r-1)\beta_{0,0}}, \beta_{0,0}, \frac{\beta_{0,0}}{2\beta_{0,0}-1}\bigg\}.$$
We just note here that $\beta_{a,b}$ is the delta invariant computed by the divisor $V_0$, while $\beta_{a,b}'$ is computed by the divisor $V_\infty$, and $\frac{r\delta(V)}{1-a+A\beta_{a,b}}\beta_{a,b}$ is computed by some divisor arising from $V$. 

The following formula tells us the delta invariant of the projective cone.

\begin{theorem}\label{deltacone}
Let V be a $\bQ$-Fano variety of dimension n and $L\sim_\bQ -\frac{1}{r}K_V$ an ample line bundle on V for some positive rational number $r$, then we have the following formula computing delta invariants
$$\delta(Y,cV_\infty)= \min\bigg\{\frac{(n+2)r}{(n+1)(r+1-c)}\delta(V), \frac{(n+2)r}{(n+1)(r+1-c)}, \frac{(n+2)(1-c)}{r+1-c}\bigg\},$$
where $0\leq c<1$ is a rational number.
\end{theorem}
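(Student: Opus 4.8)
The plan is to transfer the computation from $Y$ to $\tY$ by means of the divisorial contraction $\phi\colon\tY\to Y$ (which contracts $V_0$) and then rerun the proof of Theorem~\ref{deltabundle}: the log Fano cone $(Y,cV_\infty)$ turns out to be governed by the same formulas as the pair $(\tY,aV_0+bV_\infty)$ in the degenerate case $A=0$ (formally $a=1-r$, $b=c$), which is exactly why its $\delta$-invariant has the simpler shape claimed. Concretely, since $(Y,cV_\infty)$ carries the fibrewise $\bC^*$-action coming from the grading, by \cite{Golota19,BJ20} it is enough to compute $\inf_w A_{(Y,cV_\infty)}(w)/S_{(Y,cV_\infty)}(w)$ over $\bC^*$-invariant divisorial valuations $w$ over $Y$; as $\phi$ is $\bC^*$-equivariant and birational, $\bC(\tY)=\bC(Y)$, so these are precisely the $\bC^*$-invariant divisorial valuations over $\tY$ classified in the proof of Theorem~\ref{deltabundle} (a valuation on the base $V$ together with a weight in the fibre direction); in particular $\ord_{V_0}$, $\ord_{V_\infty}$ and the valuations $\overline F$ induced by prime divisors $F$ over $V$ occur among them.

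Next I would transport the logarithmic discrepancy and the $S$-functional through $\phi$. From $-K_{\tY}\sim_\bQ(r+1)V_\infty-(r-1)V_0$, $-K_Y\sim_\bQ(r+1)V_\infty$ and $\phi^*V_\infty=V_\infty$ one gets $K_{\tY}+cV_\infty-\phi^*(K_Y+cV_\infty)\sim_\bQ(r-1)V_0$, hence for every divisorial valuation $w$
$$A_{(Y,cV_\infty)}(w)=A_{\tY}(w)+(r-1)\,w(V_0)-c\,w(V_\infty),$$
while $\phi^*(-K_Y-cV_\infty)\sim_\bQ(r+1-c)V_\infty$ gives
$$S_{(Y,cV_\infty)}(w)=\frac{1}{\vol_{\tY}\big((r+1-c)V_\infty\big)}\int_0^\infty\vol_{\tY}\big((r+1-c)V_\infty-t\,F_w\big)\,dt,$$
the volumes being taken on a model extracting $w$. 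In particular $A_{(Y,cV_\infty)}(V_0)=r$, $A_{(Y,cV_\infty)}(V_\infty)=1-c$ and $A_{(Y,cV_\infty)}(\overline F)=A_V(F)$. Since $p^*L=V_\infty-V_0$ and $V_\infty|_{V_\infty}\cong L$, one has $\vol_{\tY}(\mu V_\infty-\nu V_0)=(L^n)(\mu^{n+1}-\nu^{n+1})$ for $\mu\ge\nu\ge0$, so every quantity above is the specialisation to $A=0$, $B=r+1-c$ of the corresponding one in the proof of Theorem~\ref{deltabundle}, where $\tfrac{n+1}{n+2}\cdot\tfrac{B^{n+2}-A^{n+2}}{B^{n+1}-A^{n+1}}$ then collapses to $\tfrac{n+1}{n+2}(r+1-c)$.

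I would then run the optimisation of Theorem~\ref{deltabundle} verbatim, grouping the $\bC^*$-invariant valuations by fibre weight and base part: $V_\infty$ contributes $A/S=\tfrac{(n+2)(1-c)}{r+1-c}$; $V_0$ contributes $\tfrac{(n+2)r}{(n+1)(r+1-c)}$; letting the base part run through the infimum defining $\delta(V)$ contributes $\tfrac{(n+2)r}{(n+1)(r+1-c)}\,\delta(V)$; and a general mixed $\bC^*$-invariant valuation has ratio at least the minimum of these three, by the same interpolation estimate as there, while the three displayed valuations realise them. Hence $\delta(Y,cV_\infty)$ equals their minimum. The dichotomy $r>1$ versus $0<r\le1$ matters only for sign bookkeeping in the previous paragraph (when $0<r\le1$ the $V_0$-part of $-K_{\tY}$ is effective) and is otherwise irrelevant, since $V_0$ is no longer a boundary component of $(Y,cV_\infty)$ but an exceptional valuation whose logarithmic discrepancy $r$ is supplied by $\phi$ rather than by a coefficient.

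The only genuinely non-formal input, exactly as in Theorem~\ref{deltabundle}, is the control of $S_{(Y,cV_\infty)}(w)$ for the two-parameter family of mixed $\bC^*$-invariant valuations, that is, of $\vol_{\tY}((r+1-c)V_\infty-t\,F_w)$, which via the $\bP^1$-bundle structure of $\tY$ reduces to a fibrewise computation with the volume functions of twisted anticanonical classes on $V$; one must additionally check that the specialisation $A=0$ causes no degeneracy there, and it does not, because $(r+1-c)V_\infty$ is still big and semiample. Granting this, reading off the closed-form constants is a short direct computation.
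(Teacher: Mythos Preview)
Your proposal is correct and follows essentially the same route as the paper. The paper's proof of Theorem~\ref{deltacone} explicitly pulls back through $\phi$ (see equation~(\ref{coneexpansion})), computes $A_{(Y,cV_\infty)}(V_0)=r$ via the crepant formula $K_{\tY}+V_0+cV_\infty=\phi^*(K_Y+cV_\infty)+rV_0$, and then observes (Lemma~\ref{specialcomp}) that all the volume computations are literally those of Lemma~\ref{delta0bundle} with $a=1-r$, $b=c$, i.e.\ $A=0$, $B=r+1-c$; for the lower bound the paper simply says ``one can use totally the same way as in the final proof of Theorem~\ref{deltabundle}'' and omits the repetition, which is exactly the $\bC^*$-invariant interpolation you spell out.
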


\begin{remark}
In fact, \cite[Remark 4.5]{Li2020} has essentially obtained above result for $0<r\leq 1$.
\end{remark}

Assume $c=0$. If $\delta(V)\geq 1$, we naturally have $r\leq n+1$, see \cite{Fuj18,Liu18}. If $\delta(V)<1$ then we have $r\delta(V)\leq n+1$ by \cite[Theorem D]{BJ20}. Hence the following result is deduced.

\begin{corollary}\label{kssdeltacone}
Let V be a $\bQ$-Fano variety of dimension n and $L\sim_\bQ -\frac{1}{r}K_V$ an ample line bundle on V for some positive rational number $r$.
\begin{enumerate}
\item If V is K-semistable, i.e. $\delta(V)\geq 1$, then $\delta(Y)$ is achieved by $V_0$, that is, $\delta(Y)=\frac{A_Y(V_0)}{S_Y(V_0)}=\frac{(n+2)r}{(n+1)(r+1)}\leq 1$,
\item If $V$ is not K-semistable, i.e. $\delta(V)<1$, then $\delta(Y)=\frac{(n+2)r}{(n+1)(r+1)}\delta(V)<1$.
\end{enumerate}
\end{corollary}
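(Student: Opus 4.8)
The plan is to deduce Corollary~\ref{kssdeltacone} directly from Theorem~\ref{deltacone} by specializing to $c=0$ and then comparing the three quantities in the resulting minimum. Abbreviate
$$T_1=\frac{(n+2)r}{(n+1)(r+1)}\delta(V),\qquad T_2=\frac{(n+2)r}{(n+1)(r+1)},\qquad T_3=\frac{n+2}{r+1},$$
so that Theorem~\ref{deltacone} with $c=0$ reads $\delta(Y)=\min\{T_1,T_2,T_3\}$; moreover, from the proof of that theorem, $T_2=A_Y(V_0)/S_Y(V_0)$, while $T_1$ is the candidate coming from a divisor over $V$ and $T_3$ the candidate coming from $V_\infty$. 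First I would record the two elementary relations $T_1=\delta(V)\,T_2$ and $T_3/T_2=(n+1)/r$, from which $T_1\le T_2\iff\delta(V)\le1$ and $T_2\le T_3\iff r\le n+1\iff T_2\le1$.

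For part~(1): since $V$ is K-semistable we have $\delta(V)\ge1$, and, as recalled in the discussion preceding the corollary, also $r\le n+1$. Hence $T_2\le T_1$ and $T_2\le T_3$, so $\delta(Y)=T_2=\frac{(n+2)r}{(n+1)(r+1)}=A_Y(V_0)/S_Y(V_0)$, which is $\le1$ because $r\le n+1$; in particular $V_0$ computes $\delta(Y)$.

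For part~(2): since $\delta(V)<1$ we get $T_1<T_2$, and since $r\delta(V)\le n+1$ by \cite[Theorem D]{BJ20} we get $T_1\le T_3$; hence $\delta(Y)=T_1=\frac{(n+2)r}{(n+1)(r+1)}\delta(V)$. It then remains to verify $\delta(Y)<1$, which I would do by cases: if $r\le n+1$ then $T_2\le1$, so $\delta(Y)=\delta(V)\,T_2\le\delta(V)<1$; if $r>n+1$ then $\delta(Y)\le T_3=\frac{n+2}{r+1}<1$. This final case split is the only point that needs a little attention, since $T_1<1$ is not visibly forced without using $r\delta(V)\le n+1$; the rest amounts to routine comparisons of explicit rational functions of $r$.
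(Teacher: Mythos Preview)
Your proof is correct and follows essentially the same route as the paper: both specialize Theorem~\ref{deltacone} to $c=0$, then invoke $r\le n+1$ (from \cite{Fuj18,Liu18}) in the K-semistable case and $r\delta(V)\le n+1$ (from \cite[Theorem D]{BJ20}) in the K-unstable case to compare the three terms. Your explicit case split verifying $\delta(Y)<1$ in part~(2) is a nice addition that the paper leaves implicit.
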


As we have mentioned in the beginning of this paper, there may not have any KE metric on an arbitrarily given Fano manifold, however, there may be conical KE metrics along some smooth divisors. 
Before we state next theorem, we first fix some notation. Let $V$  be a projective Fano manifold of dimension $n$, and $S$ is a smooth divisor on $V$ such that $S\sim_\bQ -\lambda K_V$ for some positive rational number $\lambda$. Write
$$E(V,S):=\{a\in [0,1)| \textit{$(V,aS)$ is K-semistable} \}. $$
As an application of Theorem \ref{deltacone}, we prove the following theorem on optimal angle of K-stability.

\begin{theorem}\label{thm:optimalangle}
Notation as above, suppose V and S are both K-semistable and $0<\lambda<1$, then $E(V,S)= [0,1-\frac{r}{n}]$, where $r=\frac{1}{\lambda}-1$. In particular, if $V$ and $S$ are both K-polystable, then $(V,aS)$ is K-polystable for any $a\in [0, 1-\frac{r}{n})$.
\end{theorem}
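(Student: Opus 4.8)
The plan is to pin down $E(V,S)$ as an explicit interval in two steps: first show it has the form $[0,a_0]$, then identify $a_0=1-\tfrac rn$, the upper bound coming from a single valuation and the lower bound from a degeneration to a projective cone to which Theorem \ref{deltacone} applies. The key elementary point is that $S\sim_\bQ-\lambda K_V$ forces $-(K_V+aS)\sim_\bQ(1-a\lambda)(-K_V)$ for all $a$, so rescaling the anticanonical polarization yields, for every prime divisor $F$ over $V$,
$$S_{(V,aS)}(F)=(1-a\lambda)\,S_V(F),\qquad A_{(V,aS)}(F)=A_V(F)-a\,\ord_F(S),$$
hence
$$\beta_{(V,aS)}(F)=\beta_V(F)-a\big(\ord_F(S)-\lambda S_V(F)\big)$$
is affine in $a$. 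By the valuative criterion for K-semistability, $E(V,S)=\{a\in[0,1):\beta_{(V,aS)}(F)\ge 0\text{ for all }F\}$ is an intersection of closed intervals, each containing $0$ since $\delta(V)\ge1$; thus $E(V,S)=[0,a_0]$ for some $a_0$, and it remains to compute $a_0$.

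For $a_0\le 1-\tfrac rn$ I would test with $F=S$ itself. One has $A_{(V,aS)}(S)=1-a$, and since $-(K_V+aS)-tS\sim_\bQ(\tfrac1\lambda-a-t)S$ a direct volume computation gives $S_{(V,aS)}(S)=\tfrac1{n+1}(\tfrac1\lambda-a)$, so
$$\beta_{(V,aS)}(S)=\tfrac1{n+1}\Big(n+1-\tfrac1\lambda-na\Big).$$
In particular $\beta_V(S)\ge 0$ (a consequence of $V$ being K-semistable) forces $\tfrac1\lambda\le n+1$, i.e.\ $r\le n$, so that $1-\tfrac rn\in[0,1)$ and the statement makes sense; and $\beta_{(V,aS)}(S)<0$ exactly when $a>1-\tfrac1n\big(\tfrac1\lambda-1\big)=1-\tfrac rn$. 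Hence $(V,aS)$ is K-unstable for $a\in(1-\tfrac rn,1)$ and $a_0\le1-\tfrac rn$.

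For the reverse inequality, adjunction gives $-K_S\sim_\bQ(-K_V-S)|_S=(\tfrac1\lambda-1)\,S|_S=r\,S|_S$, so $S|_S\sim_\bQ-\tfrac1rK_S$ and the projective cone $Y_S$ over $S$ with respect to $S|_S$ is, by the discussion preceding Theorem \ref{deltabundle}, a normal $\bQ$-Fano variety of dimension $n$ with $(Y_S,cV_\infty)$ log Fano for $c\in[0,1)$. Next I would exhibit a degeneration of $(V,aS)$ to $(Y_S,aV_\infty)$: the filtration of $R:=\bigoplus_k H^0(V,kS)$ by order of vanishing along $S$ has associated graded ring $\bigoplus_{m,j\ge0}H^0(S,mS|_S)\,s^j$, the homogeneous coordinate ring of $Y_S$, because the restriction maps $H^0(V,kS)\to H^0(S,kS|_S)$ are surjective (Kodaira vanishing: $V,S$ are smooth and $(k-1)S-K_V$ is ample); the associated Rees degeneration over $\bA^1$ then has general fibre $V$, special fibre $Y_S$, and carries $S=\{s_S=0\}$ to $V_\infty=\{s=0\}$. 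Applying Theorem \ref{deltacone} with base $S$ (of dimension $n-1$) and $c=1-\tfrac rn$, for which $r+1-c=\tfrac{r(n+1)}{n}$, and using $\delta(S)\ge1$, gives $\delta\big(Y_S,(1-\tfrac rn)V_\infty\big)=\min\{\delta(S),1,1\}=1$. By lower semicontinuity of the $\delta$-invariant in $\bQ$-Gorenstein families of klt log Fano pairs (cf.\ \cite{BLZ19}), $\delta\big(V,(1-\tfrac rn)S\big)\ge\delta\big(Y_S,(1-\tfrac rn)V_\infty\big)=1$, so $1-\tfrac rn\in E(V,S)$ and $a_0\ge1-\tfrac rn$. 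Combined with the previous paragraph, $E(V,S)=[0,1-\tfrac rn]$.

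It remains to treat K-polystability; the case $a=0$ is the hypothesis that $V$ is K-polystable, so fix $a\in(0,1-\tfrac rn)$. Suppose $(V,aS)$, which is K-semistable by the above, is not K-polystable; then it admits a non-product special test configuration $(\mathcal V,a\mathcal S)$ with Donaldson--Futaki invariant $0$. By the same rescaling, $a'\mapsto\mathrm{DF}(\mathcal V,a'\mathcal S)$ is affine in $a'$; it is $\ge 0$ on $[0,1-\tfrac rn]$ (since $(V,a'S)$ is K-semistable there) and vanishes at the interior point $a$, hence vanishes identically, so in particular $\mathrm{DF}(\mathcal V)=0$. As $V$ is K-polystable, $\mathcal V$ is a product test configuration induced by a one-parameter subgroup $\sigma\subseteq{\rm Aut}(V)^0$; then $\mathcal S=\overline{\bigcup_t\sigma(t)S}$, viewed as a test configuration of $S$ polarized by a positive multiple of $-K_S$, satisfies $\mathrm{DF}(\mathcal V,a'\mathcal S)=\Fut_V(\sigma)+a'c\,\mathrm{DF}_S(\mathcal S)=a'c\,\mathrm{DF}_S(\mathcal S)$ for a constant $c>0$ (because the total space $\mathcal V$ is a product and $\Fut_V(\sigma)=0$), so $\mathrm{DF}_S(\mathcal S)=0$. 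Since $S$ is K-polystable this makes $\mathcal S$ a product test configuration, and as its $\bG_m$-action on the generic fibre is trivial, $\mathcal S$ must be the trivial test configuration, i.e.\ $\sigma$ preserves $S$ --- contradicting that $(\mathcal V,a\mathcal S)$ is non-product. Hence $(V,aS)$ is K-polystable. I expect the main obstacle to be precisely this last step: rigorously ruling out a one-parameter subgroup of ${\rm Aut}(V)$ which moves $S$ yet whose induced degeneration of the boundary has vanishing Donaldson--Futaki invariant; making it precise should use the K-polystability of $S$ together with (an equivariant refinement of) the structure of K-polystable degenerations of log Fano pairs. A secondary point requiring care is the construction of the degeneration $(V,aS)\rightsquigarrow(Y_S,aV_\infty)$ --- its flatness and the identification of the special fibre with the normal $\bQ$-Fano $Y_S$ --- which is where the Kodaira-vanishing input enters.
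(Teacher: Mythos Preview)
Your determination of $E(V,S)=[0,1-\tfrac rn]$ is correct, and the route to the upper bound is cleaner than the paper's. The paper (Lemma \ref{converseinclusion}) establishes K-instability for $a>1-\tfrac rn$ by computing the Futaki invariant of the cone degeneration as an affine function of $a$ and then must invoke Theorem \ref{deltacone} a second time (Lemma \ref{deltaconeoverS}) to rule out the borderline case $\Fut=0$; your single valuative test with $F=S$ replaces all of this. Under the degeneration $S$ corresponds to the infinity section on the cone, and your inequality $\beta_{(V,aS)}(S)<0$ is exactly the term $\frac{(n+1)(1-a)}{r+1-a}<1$ of Theorem \ref{deltacone} read back to the base --- so the two arguments are secretly the same test, but you avoid passing through the cone. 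The lower bound is the paper's Lemma \ref{inclusion}, with your Rees--algebra/Kodaira-vanishing description making explicit what the paper imports from \cite{LZ19}.

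The K-polystability step, as you anticipate, does not close. Two issues. First, the decomposition $\mathrm{DF}(\mV,a'\mS)=\Fut_V(\sigma)+a'c\,\mathrm{DF}_S(\mS)$ is not the shape of the log Donaldson--Futaki invariant; a direct intersection computation on $\bar{\mV}=V\times\bP^1$ gives $\mathrm{DF}(\mV,a'\mS)=a'k$ where $k$ is the coefficient in $\bar{\mS}\sim p_1^*S+kF$, and your vanishing forces only the numerical condition $k=0$, not $\mathrm{DF}_S(\mS)=0$. Second, and more seriously: even granting that $\mS$ is a product test configuration of $S$ --- so $\mS_0\cong S$ abstractly --- this does not make $(\mV,a\mS)$ a product test configuration of the \emph{pair}. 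For that you need $\sigma\in\mathrm{Aut}(V,S)$, i.e.\ $\mS_0=S$ as a divisor in $V$; an abstract isomorphism $\mS_0\cong S$ does not enforce this (a one-parameter subgroup of $\PGL$ moving a hyperplane to another hyperplane is the toy picture), and your appeal to ``trivial $\bG_m$-action on the generic fibre'' conflates the action on the central fibre with a non-existent action on the general one. The paper closes this differently: it passes to the optimal endpoint $a=1-\tfrac rn$, identifies the K-polystable degeneration of $(V,aS)$ there as the cone $(C_p(S,M),aS_\infty)$ via \cite{LZ19} and uniqueness \cite{BX19}, and then composes degenerations to obtain a chain $S\rightsquigarrow\mS_0\rightsquigarrow S_\infty\cong S$ before invoking K-polystability of $S$. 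To repair your approach you would need comparable extra input tying the abstract isomorphism $\mS_0\cong S$ to its embedding in $V$.
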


This result has been essentially known to experts (cf. \cite{LS14,LZ19,LL19} etc.), however, according to the authors' knowledge, it has not been explicitly written down. Since it can be derived by Theorem \ref{deltacone}, we just put it here and provide a complete algebraic proof. As a corollary, we directly have following result, which  provides an answer to the question posed in \cite[Remark 3.6]{Der16b}

\begin{corollary}\label{Pncase}
For the pair $(\bP^n, S_d)$ where $S_d$ is a smooth hypersurface of degree $1\leq d\leq n$. If $S_d$ is K-polystable (this is expected to be true), then we have $E(\bP^n, S_d)=[0, 1-\frac{r}{n}]$, where $r=\frac{n+1-d}{d}$. 
\end{corollary}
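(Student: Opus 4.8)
The plan is to realize this as a direct specialization of Theorem \ref{thm:optimalangle}. First I would take $V=\bP^n$ and $S=S_d$, the smooth degree-$d$ hypersurface. Since $-K_{\bP^n}=\mO(n+1)$ and $S_d\sim \mO(d)$, we have $S_d\sim_\bQ -\lambda K_{\bP^n}$ with $\lambda=\frac{d}{n+1}$. The hypothesis $1\le d\le n$ translates into $\frac{1}{n+1}\le\lambda\le\frac{n}{n+1}$, so in particular $0<\lambda<1$ as required by Theorem \ref{thm:optimalangle} (when $d=n$ we are at the boundary $\lambda=\frac{n}{n+1}$, and the formula still makes sense with $r=\frac1\lambda-1=\frac1n$, giving $1-\frac rn=1-\frac1{n^2}$; the case $d<n$ is the generic one). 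Then $r=\frac1\lambda-1=\frac{n+1}{d}-1=\frac{n+1-d}{d}$, matching the statement.

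Next I must verify the two K-semistability hypotheses of Theorem \ref{thm:optimalangle}, namely that $V=\bP^n$ and $S=S_d$ are K-semistable. That $\bP^n$ is K-semistable (indeed K-polystable, as it admits the Fubini--Study KE metric) is classical. For $S_d$, we are granted the assumption that it is K-polystable (this is the hypothesis "If $S_d$ is K-polystable" in the statement, and is expected in the stated degree range), which in particular implies K-semistability. With both hypotheses in place, Theorem \ref{thm:optimalangle} applies verbatim and yields $E(\bP^n,S_d)=[0,1-\frac rn]$ with $r=\frac{n+1-d}{d}$, which is exactly the asserted equality. Moreover the "in particular" clause of Theorem \ref{thm:optimalangle} upgrades this: since $\bP^n$ and $S_d$ are K-polystable, $(\bP^n,aS_d)$ is K-polystable for all $a\in[0,1-\frac rn)$, which answers the question of \cite[Remark 3.6]{Der16b}.

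There is essentially no obstacle here beyond bookkeeping: the only nontrivial input is Theorem \ref{thm:optimalangle}, already established, together with the (assumed) K-polystability of $S_d$. The one point requiring a word of care is the numerical check that the parameter ranges are compatible --- that $\lambda=\frac d{n+1}\in(0,1)$ for $1\le d\le n$, with the endpoint $d=n$ handled by noting $r=\frac1n>0$ so that $1-\frac rn=1-\frac1{n^2}\in(0,1)$ is a legitimate angle --- but this is immediate. I would therefore present the argument as a short paragraph: compute $\lambda$ and $r$, invoke K-polystability of $\bP^n$ and of $S_d$, and cite Theorem \ref{thm:optimalangle}.
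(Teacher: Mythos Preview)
Your proposal is correct and follows exactly the same approach as the paper: set $V=\bP^n$, $S=S_d$, compute $\lambda=\tfrac{d}{n+1}\in(0,1)$ and $r=\tfrac{n+1-d}{d}$, note that $\bP^n$ is K-polystable and $S_d$ is K-polystable by hypothesis, and apply Theorem~\ref{thm:optimalangle}. The paper's own proof is the one-line ``just replace $(\bP^n,S_d)$ by $(V,S)$, then apply Theorem~\ref{thm:optimalangle}'', so your additional numerical checks are simply making explicit what the paper leaves implicit.
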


The paper is organized as follows. In Section \ref{preliminary}, we give a brief introduction to delta invariant and greatest lower Ricci bound, and also include some results on projective bundles and cones of Fano type. In Section \ref{section:bundle} and Section \ref{section:cone}, we prove Theorem \ref{deltabundle} and Theorem \ref{deltacone} respectively, by purely algebraic method. In Section \ref{section:optimalangle}, we give a complete proof of Theorem \ref{thm:optimalangle}. In the last section, as an application of our main results, we give some examples on computing delta invariants of some special hypersurfaces. Finally in Section \ref{analyticpart}, we use Calabi ansatz to prove a weaker version of Theorem \ref{smoothdeltabundle}.

\textbf{Acknowledgments.}
The first author would like to thank Yanir Rubinstein for bringing the example in \cite{Sze11} to his attention. C.Zhou would like to thank his advisor Prof. Chenyang Xu for his constant support and encouragement. We thank Yuchen Liu and Ziquan Zhuang for useful discussions. 
K. Zhang is supported by the China post-doctoral grant BX20190014.

\section{Preliminaries}\label{preliminary}

In this section, we will collect some fundamental results on projective bundles and projective cones, then we give a quick introduction of delta invariant which will play a central role in the subsequent contents, and finally we briefly recall the definition of the greatest Ricci lower bound. We say $(V,\Delta)$ is a log pair if $V$ is a projective normal variety and $\Delta$ is an effective $\bQ$-divisor on $V$ such that $K_V+\Delta$ is $\bQ$-Cartier. Suppose $f: W\to V$ is a proper birational mophism between normal varieties and $E$ is a prime divisor on $W$, then we define
$$A_{V,\Delta}(E):=\ord_E(K_W-f^*(K_V+\Delta)) $$
to be the log discrepancy of the divisor $E$ associated to the pair $(V,\Delta)$. The log pair $(V,\Delta)$ is called a log Fano variety if it admits klt singularities and $-(K_V+\Delta)$ is ample. If $\Delta=0$, we just say $V$ is a $\bQ$-Fano variety. For the concepts of klt singularities, please refer to \cite{KM98,Kollar13}.

\subsection{Projective bundles and Projective cones}
Throughout, $(V,L)$ will be a polarized pair where $V$ is a normal projective variety of dimension $n$ and $L$ an ample line bundle on $V$. Just as in the introduction, we fix some notation below,

$$Y:=\Proj \oplus_{k\in \bN}\oplus_{l\in \bN}H^0(V,kL)s^l,  \tY:=\bP_V(L^{-1}\oplus \mO_V),$$
$$X:=\Spec \oplus_{k\in \bN} H^0(V,kL)= Y\setminus V_\infty, \tX:=\tY\setminus V_\infty,$$
where $V_\infty$ is the infinite section of $\tY$. There is a natural contraction $\phi: \tY\to Y$ (resp. $\phi: \tX\to X$) which contracts $V_0$, where $V_0$ is the zero section of $\tY$. We just list the properties of $\tY$ and $Y$ in the following lemma.

\begin{lemma}\label{lemma 2.1}
Notation as above, let $p: \tY:=\bP_V(L^{-1}\oplus \mO_V)\to V$ be the natural projection, then we have
\begin{enumerate}
\item $V_\infty-V_0\sim p^*L, V_\infty|_{V_\infty}\sim L, V_0|_{V_0}\sim L^{-1}.$
\item $\mO_{\tilde{Y}}(1)\sim V_\infty,$
\item $\phi:\tilde{Y}\to Y$ is the blowup of vertex point $o\in X\subset Y$, and $\tilde{Y}\setminus V_0\cong Y\setminus o$, where $o$ is the cone vertex of $X$.
\item If $V$ is $\bQ$-Gorenstein, then $K_{\tilde{Y}}=p^*K_V\otimes\mO_{\tilde{Y}}(-2)\otimes p^*L\sim p^*K_V-V_\infty-V_0.$
\item If $V$ is a $\bQ$-Fano variety and $L\sim_\bQ -\frac{1}{r}K_V$ is an ample line bundle on V for some positive rational number r, then both $(\tY,aV_0+bV_\infty)$ and $(Y,cV_\infty)$ are log Fano pairs, where  $0\leq a<1,0\leq b<1,0\leq c<1$ if $r>1$, and $1-r<a<1, 0\leq b<1,0\leq c<1$ if $0<r\leq 1$.
\end{enumerate}
\end{lemma}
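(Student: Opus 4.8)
The plan is to treat the five assertions in order, since later parts build on earlier ones. For (1), I would recall the standard description of the projective bundle $\tY=\bP_V(\mE)$ with $\mE=L^{-1}\oplus\mO_V$. The two summands $L^{-1}$ and $\mO_V$ give rise to the two sections $V_0$ and $V_\infty$; concretely $V_\infty$ is the section corresponding to the quotient $\mE\twoheadrightarrow\mO_V$ and $V_0$ to the quotient $\mE\twoheadrightarrow L^{-1}$. The key computation is the conormal bundle of each section: for a section $\sigma$ of $\bP_V(\mE)\to V$ coming from a quotient $\mE\to Q$ with kernel $K$, one has $N_{\sigma}^\vee\cong K\otimes Q^{-1}$ pulled back to the section. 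Applying this to $V_\infty$ (kernel $L^{-1}$, quotient $\mO_V$) gives $N_{V_\infty}^\vee\cong L^{-1}$, i.e. $V_\infty|_{V_\infty}\sim L$; applying it to $V_0$ (kernel $\mO_V$, quotient $L^{-1}$) gives $V_0|_{V_0}\sim L^{-1}$. The relation $V_\infty-V_0\sim p^*L$ then follows by comparing the two natural $\mO_{\tY}(1)$-type classes, or equivalently from the exact sequence relating the tautological quotients; restricting $V_\infty-V_0$ to $V_\infty$ recovers $L-0=L$ as a consistency check.

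For (2), I would normalize the choice of $\mO_{\tY}(1)$: using the convention that $\mO_{\tY}(1)$ is the tautological quotient line bundle of $p^*\mE$, the section $V_\infty$ (coming from the quotient $\mE\to\mO_V$) is exactly the zero locus of the corresponding section of $\mO_{\tY}(1)$, so $\mO_{\tY}(1)\sim V_\infty$. Part (3) is the geometric identification of the contraction: the linear system $|mV_\infty|$ for $m\gg0$ is base-point free (since $V_\infty\sim\mO_{\tY}(1)$ and $\mE$ is globally generated after twisting), it is positive on every curve not contained in $V_0$, and it has degree zero on the fibers of $V_0\to V$ restricted appropriately — more precisely $V_\infty|_{V_0}\sim(V_\infty-V_0)|_{V_0}+V_0|_{V_0}\sim L^{-1}+L\sim 0$ wait, one must be careful: $V_\infty$ and $V_0$ are disjoint sections, so $V_\infty|_{V_0}=0$ as a divisor, hence $\mO_{\tY}(1)|_{V_0}\cong\mO_{V_0}$. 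Thus the morphism $\phi$ defined by $|mV_\infty|$ contracts $V_0$ to a point and is an isomorphism away from $V_0$; identifying the image with $Y=\Proj\bigoplus_{k,l}H^0(V,kL)s^l$ is then a matter of computing the section ring $\bigoplus_m H^0(\tY,mV_\infty)$ and checking it agrees with the cone's coordinate ring, using $p_*\mO_{\tY}(m)=\mathrm{Sym}^m\mE=\bigoplus_{i+j=m}L^{-i}$ and discarding the negative pieces. That $\phi$ is the blowup of the vertex $o$ follows since $V_0$ with normal bundle $L^{-1}|_{V_0}$ is the exceptional divisor over the cone point.

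For (4), I would use the relative Euler sequence $0\to\mO_{\tY}\to p^*\mE^\vee\otimes\mO_{\tY}(1)\to T_{\tY/V}\to0$, giving $K_{\tY/V}=\det(p^*\mE\otimes\mO_{\tY}(-1))=p^*\det\mE\otimes\mO_{\tY}(-2)$; since $\det\mE=L^{-1}$, this yields $K_{\tY}=p^*K_V+p^*L^{-1}+\mO_{\tY}(-2)\cdot(\text{sign check})$, and combining with (1) and (2) rewrites this as $p^*K_V-V_\infty-V_0$. Part (5) is then essentially arithmetic: using (4) and (1), $-(K_{\tY}+aV_0+bV_\infty)\sim_\bQ p^*(-K_V)+(1-a)V_0+(1-b)V_\infty$, and writing everything in terms of $V_\infty$ and $p^*L$ one checks ampleness via the Nakai–Moishezon-type criterion on a $\bP^1$-bundle (positive on fibers iff the coefficients of $V_0$ and $V_\infty$ are positive, positive on base directions via $L$ ample), which gives the stated ranges for $a,b$ depending on whether $r>1$ or $0<r\le1$; kltness of $(\tY,aV_0+bV_\infty)$ is automatic since $\tY$ is smooth over a $\bQ$-Fano (hence klt) base and $aV_0+bV_\infty$ is a simple normal crossings divisor with coefficients $<1$. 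For the cone $(Y,cV_\infty)$ one transports this through $\phi$: since $\phi$ is the blowup of the vertex and $V_0$ has coefficient strictly below the klt threshold in the pushforward computation, $(Y,cV_\infty)$ is klt with $-(K_Y+cV_\infty)$ ample for $c$ in the stated range, by the standard cone-singularity criterion (klt at the vertex iff the log discrepancy $A_{\tY}(V_0)$ against the naturally associated boundary is positive, which is exactly the numerical condition appearing here).

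The main obstacle will be bookkeeping the sign and normalization conventions consistently — which summand gives $V_0$ versus $V_\infty$, whether $\mO_{\tY}(1)$ is the tautological sub or quotient, and the resulting signs in $K_{\tY/V}$ — because a single convention slip propagates through all five parts. Once the conventions in (1) and (2) are pinned down, parts (3)–(5) are formal consequences of adjunction, the projection formula, and standard ampleness/kltness criteria for $\bP^1$-bundles and cones.
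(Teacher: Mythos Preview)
Your proposal is correct in outline, but it is organized differently from the paper and contains one concrete normalization slip worth flagging.

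The paper does not compute normal bundles of sections or run Nakai--Moishezon; instead it writes $\tY\cong\Proj_V\bigoplus_{i,j}L^is^j$ directly (implicitly twisting $L^{-1}\oplus\mO_V$ by $L$), from which (2) and (3) fall out at once: $\mO_{\tY}(1)$ is cut by $s=0$, which is $V_\infty$, and the map to $Y=\Proj\bigoplus_{i,j}H^0(V,iL)s^j$ is visibly the blowup of the vertex. For (1) the paper observes that $V_\infty-V_0$ is trivial on each $\bP^1$-fiber, hence equals $p^*M$ for some line bundle $M$, and restriction to $V_0$ identifies $M\sim L$. Part (4) is then declared to follow from (1) and (2), and the klt claim for $(Y,cV_\infty)$ is deferred to the cone-singularity theorem stated immediately afterwards. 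Your route via the section/normal-bundle formula and the relative Euler sequence is a perfectly good alternative, just heavier; the paper's graded-algebra shortcut buys (2) and (3) for free and avoids any case analysis for ampleness.

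The slip: in your (2) you take $\mO_{\tY}(1)$ relative to $\mE=L^{-1}\oplus\mO_V$ and claim the section coming from $\mE\to\mO_V$ has zero locus $V_\infty$. In fact the global section $1\in H^0(\mO_V)\subset H^0(\mE)$ vanishes where the tautological quotient kills the $\mO_V$-summand, which is $V_0$; so $\mO_{\bP(\mE)}(1)\sim V_0$, not $V_\infty$. The statement (and (4)) are using $\mO_{\tY}(1)$ relative to $L\oplus\mO_V$, i.e.\ after the twist $\bP(L^{-1}\oplus\mO_V)\cong\bP(L\oplus\mO_V)$; with that normalization one indeed gets $\mO_{\tY}(1)\sim V_\infty$ and $K_{\tY}=p^*K_V\otimes\mO_{\tY}(-2)\otimes p^*L$. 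You anticipated exactly this kind of bookkeeping issue in your final paragraph; once you fix the twist, the rest of your argument goes through as written.
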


\begin{proof}
By the construction we see that
$$\tilde{Y}\cong\Proj_V S(L\oplus \mO_V)=\Proj_V \oplus_{i,j\in \bN} L^is^j \to \Proj \oplus_{i,j\in \bN} H^0(V, iL)s^j:=Y$$
is the natural blowup of the vertex $o\in Y$, thus $\mO_{\tilde{Y}}(1)$ is defined by $s=0$, which is the infinite section $V_\infty$, and $V_0|_{V_0}\sim L^{-1}$. It's clear that $V_\infty-V_0$ is a relative trivial line bundle for the projection map $p: \tilde{Y}\to V$, thus there is a line bundle $M$ on $V$ such that $p^*M\sim V_\infty-V_0$. Restrict it to $V_0$ we see $M\sim L$, and restrict it to $V_\infty$ we the see that $V_\infty|_{V_\infty}\sim L$. By now we complete the proof of the first three statements. The fourth one follows directly from (1) and (2). The only issue is to make sure $(Y,cV_\infty)$ are indeed log Fano varieties, specially with klt singularities. This follows from the following Theorem \ref{conesing}.
\end{proof}

Let $\Delta$ be an effective $\bQ$-divisor on $V$ such that $K_V+\Delta$ is $\bQ$-Cartier, and $\Delta_X$ and $\Delta_{\tilde{X}}$ be the corresponding extending divisors on 
$X$ and $\tilde{X}$ respectively. We have following results \cite{Kollar13}.

\begin{lemma}\label{conelemma}
Notation as above,  we have
\begin{enumerate}
\item $\Cl(\tilde{X})\cong \Cl(V).$ 
\item $\Pic(X)=0.$ 
\item $\Cl(X)\cong \Cl(V)/\bZ [L].$ 
\item $K_{\tilde{X}}+\Delta_{\tilde{X}}+V_0\sim p^*(K_V+\Delta),$ where we also use p to denote the projection $\tX\to V$.
\item We assume more that $K_X+\Delta_X$ is $\bQ$-Cartier,
then there is some rational number $r$ such that $L\sim_\bQ -\frac{1}{r}(K_V+\Delta)$. 
\item Conversely, if $K_V+\Delta$ is proportional to $L$, then $K_X+\Delta_X$ is $\bQ$-Cartier. 
\item We still assume $K_X+\Delta_X$ is $\bQ$-Cartier, then $K_{\tilde{X}}+\Delta_{\tilde{X}}+V_0=\phi^*(K_X+\Delta_X)+rV_0,$ where $r$ is as above and $r=A_{X,\Delta_X}(V_0).$
\end{enumerate}
\end{lemma}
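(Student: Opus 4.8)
The plan is to treat the seven statements as standard facts about affine and projective cones (all are recorded in \cite{Kollar13}), organized around two structural observations: that $\tX=\tY\setminus V_\infty$ is the total space of a line bundle over $V$ with zero section $V_0$, and that $\phi\colon\tX\to X$ is the blowdown of $V_0$ to the vertex $o$, where $X=\Spec R$ with $R=\bigoplus_{k\in\bN}H^0(V,kL)$ an $\bN$-graded ring whose degree-zero part is $\bC$. I would establish (1), (2), (3) first, deduce (4) from Lemma \ref{lemma 2.1}, and then read off (5), (6), (7).

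For (1), since $p\colon\tX\to V$ is an affine line bundle, the flat pullback $p^*\colon\Cl(V)\to\Cl(\tX)$ is an isomorphism by homotopy invariance of the class group ($p^*$ is split injective via the zero section, and surjective because the generic fibre is $\bA^1_{\bC(V)}$). For (2), a finitely generated projective module over an $\bN$-graded ring whose degree-zero part is a field is free, so $\Pic(X)=0$; equivalently, every invertible sheaf on $X$ is $\bC^*$-linearizable and hence trivial since all of $X$ degenerates to the fixed vertex. For (3), the vertex $o$ has codimension $n+1\ge2$ when $n\ge1$ (the case $n=0$ being immediate), so $\Cl(X)\xrightarrow{\ \sim\ }\Cl(X\setminus o)=\Cl(\tX\setminus V_0)$; the excision sequence $\bZ[V_0]\to\Cl(\tX)\to\Cl(\tX\setminus V_0)\to0$ together with (1) and the identity $[V_0]=-[L]$ inside $\Cl(\tX)\cong\Cl(V)$ --- obtained by restricting $V_0$ to itself via $V_0|_{V_0}\sim L^{-1}$ of Lemma \ref{lemma 2.1}(1), and noting $[V_0]$ is non-torsion since $L$ is ample --- then gives $\Cl(X)\cong\Cl(V)/\bZ[L]$. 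Claim (4) follows by restricting $K_{\tY}=p^*K_V-V_\infty-V_0$ of Lemma \ref{lemma 2.1}(4) to $\tX=\tY\setminus V_\infty$, where $V_\infty$ restricts to the zero divisor and $\Delta_{\tX}$ is $p^*\Delta$.

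Pushing (4) forward along $\phi$ (with $\Delta=0$) gives $K_X\sim(K_V)_X$, so the homomorphism $\Cl(V)\to\Cl(X)$ of (3) carries $[K_V+\Delta]$ to $[K_X+\Delta_X]$. For (6): if $K_V+\Delta\sim_\bQ-rL$, pick $m$ with $mr\in\bZ$ and $m(K_V+\Delta)\sim-mrL$; then $m(K_X+\Delta_X)\sim-mr\,L_X\sim0$ since $[L_X]=0$ in $\Cl(X)$ by (3), so $m(K_X+\Delta_X)$ is Cartier, i.e. $K_X+\Delta_X$ is $\bQ$-Cartier. For (5): conversely, $\bQ$-Cartierness together with $\Pic(X)=0$ forces a multiple of $K_X+\Delta_X$ to be linearly trivial, so its class is torsion in $\Cl(X)$; by (3) this puts $[K_V+\Delta]$ in $\bQ[L]\subset\Cl(V)\otimes\bQ$, i.e. $K_V+\Delta\sim_\bQ cL$ for a rational $c$, which we write as $-1/r$ (nonzero in our setting, where $-(K_V+\Delta)$ is ample).

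Finally, for (7): given (6), $\phi^*(K_X+\Delta_X)$ is defined, and since $\phi$ is an isomorphism over $X\setminus o$ and $V_0\not\subset\Supp\Delta_{\tX}$, comparison with (4) gives $\phi^*(K_X+\Delta_X)=K_{\tX}+\Delta_{\tX}+\mu V_0$ for a unique $\mu\in\bQ$. To compute $\mu$ I would combine $\phi^*(K_X+\Delta_X)\sim_\bQ0$ (from $\Pic(X)=0$) with $K_{\tX}+\Delta_{\tX}\sim_\bQ(r-1)V_0$ on $\tX$ --- the latter from (4), the relation $p^*L\sim V_\infty-V_0\sim-V_0$ on $\tX$ (Lemma \ref{lemma 2.1}(1)), and $K_V+\Delta\sim_\bQ-rL$ --- and the non-torsion of $[V_0]$ in $\Cl(\tX)$; this yields $\mu=1-r$, which rearranges to $K_{\tX}+\Delta_{\tX}+V_0=\phi^*(K_X+\Delta_X)+rV_0$. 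Reading off $K_{\tX}-\phi^*(K_X+\Delta_X)=(r-1)V_0-\Delta_{\tX}$ and adding $1$ (log discrepancy $=$ discrepancy $+\,1$) gives $A_{X,\Delta_X}(V_0)=r$. The step needing most care is (7): keeping the sign and the precise coefficient of $V_0$ straight --- distinguishing the $r$ and $r-1$ coming from the log-discrepancy convention and from the boundary $\Delta_{\tX}+V_0$ --- together with the soft input $K_X\sim(K_V)_X$ underlying (5)--(7); everything else is routine, which is why the paper simply cites \cite{Kollar13}.
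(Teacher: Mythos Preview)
Your proposal is correct and follows essentially the same route as the paper: the excision sequence for (3), Lemma~\ref{lemma 2.1} for (4), and the combination of $\Pic(X)=0$ with (3) and (4) for (5)--(7). The only noteworthy deviation is in (2), where you invoke the graded-ring fact that projective modules over an $\bN$-graded ring with degree-zero part a field are free, whereas the paper argues that any line bundle on $X$ pulls back trivially to $\tX$ (since its restriction to $V_0$ is the pullback from the point $o$) and then descends; both are standard and equally short.
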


\begin{proof}
For (1), one only needs to note that $\tilde{X}$ is an $\bA^1$-bundle over $V$.
For (2), we consider $V\cong V_0\hookrightarrow \tilde{X}\to X$, where the last arrow denoted by $\phi$ means the blowup  of the cone vertex. For any line bundle $M$ on $X$, it is pulled back to $\tilde{X}$ to be a trivial line bundle, thus $M$ is also a trivial line bundle. For (3), consider the exact sequence 
$$\bZ[V_0]\to \Cl(\tilde{X})\to \Cl(X\setminus o)\to 0,$$
 then the facts $\Cl(X\setminus o)\cong \Cl(X)$ and $V_0|_{V_0}\sim L^{-1}$ conclude the result. For (4), it is directly implied if we write down the differential form. For (5), as $m(K_X+\Delta_X)$ is a trivial line bundle for a divisible $m$ due to (2), $m(K_{X\setminus o}+\Delta_{X\setminus o})$ is also trivial, then by the exact sequence above,  one sees there is a rational number $r$ such that $p^*(K_V+\Delta)\sim_\bQ rV_0$. Hence (5) is finished by restriction. For (6), one only needs to note that $m(K_X+\Delta_X)$ is Cartier if and only if $m(K_X+\Delta_X)$ is a trivial line bundle if and only if $m(K_{X\setminus o}+\Delta_{X\setminus o})$ is a trivial line bundle, then apply the above exact sequence. For (7), we apply (5) to get a rational number $r$ such that $L\sim_\bQ -\frac{1}{r}(K_V+\Delta)$, so $K_{\tilde{X}}+\Delta_{\tilde{X}}+V_0\sim_{\bQ,\phi} rV_0$ by (4).
\end{proof}

The above lemma directly implies the following result on cone singularities, also see \cite{Kollar13}.

\begin{theorem}\label{conesing}
Let $(V,\Delta)$ be a log pair of dimension n, then
\begin{enumerate}
 \item If $(V,\Delta)$ is a log Fano pair and $L=-\frac{1}{r}(K_V+\Delta)$ is an ample line bundle on V for some rational $r>0$, then $(X,\Delta_X)$ admits klt singularities.
\item If $(V,\Delta)$ is a general type pair and $L=-\frac{1}{r}(K_V+\Delta)$ is an ample line bundle on V for some rational $r<0$, then $(X,\Delta_X)$ is not log canonical.
\item If $(V,\Delta)$ is a log canonical Calabi-Yau pair and $L$ is an ample line bundle on V, then $(X,\Delta_X)$ is log canonical.\end{enumerate}
\end{theorem}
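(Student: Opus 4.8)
The plan is to reduce all three statements to a single discrepancy computation on the blowup $\tilde X$, using the formulas recorded in Lemma \ref{conelemma}. Recall from Lemma \ref{conelemma}(4) that $K_{\tilde X}+\Delta_{\tilde X}+V_0\sim p^*(K_V+\Delta)$, and that $\phi:\tilde X\to X$ contracts only $V_0$ with $A_{X,\Delta_X}(V_0)=r$ whenever $K_X+\Delta_X$ is $\bQ$-Cartier (Lemma \ref{conelemma}(5)--(7)). So the crossing divisor on $\tilde X$ over the vertex is $V_0$, and the whole question is the sign and size of the coefficient of $V_0$ in $K_{\tilde X}+\Delta_{\tilde X}-\phi^*(K_X+\Delta_X)$, together with the singularities of $\tilde X$ away from $V_0$ (equivalently of $X$ away from the vertex $o$), which are the same as those of $V$ since $X\setminus\{o\}\cong\tilde X\setminus V_0$ is an $\bA^1$-bundle (really, a $\bG_m$-bundle) over $V$. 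Thus $(X,\Delta_X)$ has, away from $o$, exactly the singularity type of $(V,\Delta)$ (klt / lc / not lc respectively), and the only extra condition to check is at $V_0$.

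First I would handle the case $r>0$ (statement (1)). Here $(V,\Delta)$ is klt, so $\tilde X$ is klt away from $V_0$; and $V_0$ itself: since $K_{\tilde X}+\Delta_{\tilde X}+V_0=\phi^*(K_X+\Delta_X)+rV_0$ by Lemma \ref{conelemma}(7), the log discrepancy of $V_0$ over $(X,\Delta_X)$ is $A_{X,\Delta_X}(V_0)=r>0$, so $(X,\Delta_X)$ is klt in a neighbourhood of $V_0$ (more precisely, running a log resolution of $\tilde X$ that is an isomorphism near $V_0$, every exceptional divisor either maps into $X\setminus o$, where kltness of $V$ gives log discrepancy $>0$, or is $V_0$ with log discrepancy $r>0$). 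Hence $(X,\Delta_X)$ is klt. Note we also need $-(K_V+\Delta)$ ample precisely to make $L$ ample, which is the standing hypothesis, and this is what guarantees $X$ is the affine cone in the first place.

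For statement (3), $L$ is an arbitrary ample line bundle and $K_V+\Delta\sim_\bQ 0$, so in the notation of Lemma \ref{conelemma} we are in the limiting case ``$r=\infty$'': here $p^*(K_V+\Delta)\sim_\bQ 0$, so $K_{\tilde X}+\Delta_{\tilde X}+V_0\sim_\bQ 0$, i.e. the log discrepancy of $V_0$ with respect to $(X,\Delta_X+ (\text{boundary }V_\infty?))$... more cleanly: $\phi^*(K_X+\Delta_X)=K_{\tilde X}+\Delta_{\tilde X}+V_0$, so $A_{X,\Delta_X}(V_0)=0$; combined with log canonicity of $(V,\Delta)$ away from $V_0$, this gives that $(X,\Delta_X)$ is log canonical (and not klt, as expected). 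For statement (2), $r<0$ means $\phi^*(K_X+\Delta_X)=K_{\tilde X}+\Delta_{\tilde X}+(1-r)V_0$ with $1-r>1$, so $A_{X,\Delta_X}(V_0)=r<0$, which immediately violates log canonicity. The main obstacle, such as it is, is purely bookkeeping: one must be careful that $K_X+\Delta_X$ is genuinely $\bQ$-Cartier so that $\phi^*(K_X+\Delta_X)$ makes sense — this is exactly Lemma \ref{conelemma}(6), which applies because $K_V+\Delta\sim_\bQ L$ (up to scalar) in all three cases — and that pulling back along a log resolution of $\tilde X$ refining $\phi$ does not introduce discrepancies lower than those already seen on $V$ and at $V_0$, which follows because such a resolution is crepant over $X\setminus o$ for the $\bA^1$-bundle structure. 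Beyond that, the three statements are simply the three sign regimes $r>0$, $r<0$, $r=\infty$ of the single identity $A_{X,\Delta_X}(V_0)=r$.
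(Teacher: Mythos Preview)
Your plan is correct and matches the paper's (non-)proof, which simply asserts that Theorem \ref{conesing} follows from Lemma \ref{conelemma} and cites Koll\'ar. The reduction to the single identity $A_{X,\Delta_X}(V_0)=r$, together with the observation that all other discrepancies of $(X,\Delta_X)$ coincide with those of $(V,\Delta)$ via the $\bA^1$-bundle structure, is exactly the intended argument.

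One point to tighten before you write it up: you cannot in general choose a log resolution of $(\tilde X,\Delta_{\tilde X})$ that is an isomorphism near $V_0$, since $\Delta_{\tilde X}=p^{-1}\Delta$ and $\mathrm{Sing}(\tilde X)=p^{-1}\mathrm{Sing}(V)$ both meet $V_0$. The clean fix is to pick a log resolution $\pi:V'\to V$ of $(V,\Delta)$ and base-change the bundle: $\tilde X':=\tilde X\times_V V'\to\tilde X\to X$ is then a log resolution of $(X,\Delta_X)$, its exceptional and boundary divisors are exactly $V_0'$ and the vertical pullbacks $p'^*E_i$, and from $K_{\tilde X'}+V_0'=p'^*K_{V'}$ (Lemma \ref{conelemma}(4) upstairs) one reads off $A_{X,\Delta_X}(V_0')=r$ and $A_{X,\Delta_X}(p'^*E_i)=A_{V,\Delta}(E_i)$. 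This also replaces your phrase ``crepant over $X\setminus o$ for the $\bA^1$-bundle structure'', which is not the right word: the resolution is not crepant there, but its discrepancies agree with those of $(V,\Delta)$, which is what you actually need.
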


\begin{corollary}
Let V be a projective normal variety and L an ample line bundle on V. If $X:=\Spec \oplus_{k\in bN} H^0(V,kL)$ admits klt singularities, specially, $K_X$ is $\bQ$-Cartier, then there is a positive rational number $r>0$ such that $L\sim_\bQ -\frac{1}{r}K_V$. In particular, V is a $\bQ$-Fano variety.
\end{corollary}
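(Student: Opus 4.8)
The plan is to extract everything from Lemma~\ref{conelemma} and the geometry of the cone $X$ away from its vertex $o$.

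Since $X$ is klt, $K_X$ is $\bQ$-Cartier, so Lemma~\ref{conelemma}(5) and~(7) produce a rational number $r$ with $L\sim_\bQ-\tfrac{1}{r}K_V$ and $r=A_X(V_0)$, where $V_0$ is regarded as the $\phi$-exceptional prime divisor over $X$. (One may also see the relation $K_V\sim_\bQ-rL$ directly by restricting $K_{\tilde X}+V_0=\phi^*K_X+rV_0$ to $V_0\cong V$, using $V_0|_{V_0}\sim L^{-1}$ from Lemma~\ref{lemma 2.1}(1) together with $\phi^*K_X|_{V_0}=0$.) The point is the sign of $r$: klt-ness of $X$ forces every prime divisor over $X$ to have positive log discrepancy, so $r=A_X(V_0)>0$; hence $-K_V\sim_\bQ rL$ is ample and $K_V$ is $\bQ$-Cartier.

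It remains to prove that $V$ is klt, for then $V$ is normal, $\bQ$-Gorenstein, klt, with $-K_V$ ample, i.e.\ a $\bQ$-Fano variety. I would not try to deduce this by first showing that $\tilde Y$ (or $\tilde X$) is klt, because the identity $K_{\tilde X}=\phi^*K_X+(r-1)V_0$ only propagates klt-ness from $X$ to $\tilde X$ when $r\le 1$. Instead I would argue away from the vertex: by Lemma~\ref{lemma 2.1}(3) one has $\tilde X\setminus V_0\cong X\setminus o$, which is klt, being an open subset of the klt variety $X$; and $p\colon\tilde X\to V$ is an $\bA^1$-bundle (cf.\ the proof of Lemma~\ref{conelemma}) with section $V_0$, so $\tilde X\setminus V_0\to V$ is a smooth surjective morphism. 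Since klt-ness is preserved under smooth surjective morphisms in both directions (see e.g.\ \cite{KM98,Kollar13}), it follows that $V$ is klt, which completes the argument.

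The only step that is not routine bookkeeping with Lemmas~\ref{conelemma} and~\ref{lemma 2.1} is this last descent. It holds because for a smooth surjective morphism $f\colon W\to V$ the relative canonical base-changes compatibly, so any divisor $E$ over $V$ with $A_V(E)\le 0$ pulls back to a divisor $f^{-1}E$ over $W$ with $A_W(f^{-1}E)=A_V(E)\le 0$, contradicting that $W$ is klt. I expect this fact, together with the decision to work on $X\setminus o$ rather than on a resolution of $X$, to be where the only real content lies.
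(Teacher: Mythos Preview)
Your argument is correct. The paper states this corollary without proof, treating it as an immediate consequence of Lemma~\ref{conelemma} and the material cited from \cite{Kollar13}; your write-up supplies exactly the details one would expect: Lemma~\ref{conelemma}(5),(7) give $r=A_X(V_0)$, klt-ness of $X$ forces $r>0$, and then $-K_V\sim_\bQ rL$ is ample and $\bQ$-Cartier. Your descent step for klt-ness of $V$ via the smooth surjection $\tilde X\setminus V_0\cong X\setminus o\to V$ is the clean way to finish, and your remark that going through $\tilde X$ itself would only work directly for $r\le 1$ is a correct and useful observation.
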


\subsection{Delta invariant}

In this section we assume $(V,\Delta)$ is a log Fano variety of dimension $n$. Delta invariant is introduced in \cite{FO18} to measure the singularities of the anti-canonical divisor of a $\bQ$-Fano variety, which is proved to be a powerful K-stability threshold and has led to many progresses in the field of K-stability of Fano varieties. For various concepts of K-stability please refer to \cite{Fuj19c,BJ20,BX19,BLX19} etc.. We first define $m$-basis type divisors of $(V,\Delta)$.

\begin{definition}
For a sufficiently divisible fixed natural number $m$ such that $-m(K_V+\Delta)$ is an ample line bundle, one chooses any complete basis of $H^0(V,-m(K_V+\Delta))$, say $\{s_1,s_2,...,s_{r_m} \}$, where $r_m=\dim H^0(V,-m(K_V+\Delta))$, define $D_m:=\frac{\sum_{i=1}^{r_m}{\rm div}(s_i=0)}{mr_m}$. Divisors of this form are called $m$-basis type divisors. It is clear that $D_m\sim_\bQ -(K_V+\Delta)$. For a prime divisor $E$ over $V$ (i.e. there exists a proper birational morphism from a normal variety $W\to V$ such that $E$ is a prime divisor on $W$), we define the following $S_m$-invariant and $m$-th delta invariant of $(V,\Delta)$,
$$S_{m,(V,\Delta)}(E):=\sup_{D_m}\ord_E(D_m),$$
$$ \delta_{m,(V,\Delta)}(\ord_E):=\frac{A_{(V,\Delta)}(E)}{S_{m,(V,\Delta)}(E)}.$$
\end{definition}

The following lemma is due to \cite{BJ20,FO18}.

\begin{lemma}
Notation as in above definition,
\begin{enumerate}
\item For each sufficiently divisible $m$ and a prime divisor $E$ over V, there is an m-basis type divisor $D_m$ such that $S_{m,(V,\Delta)}(E)=\ord_E(D_m)$.
\item Let m tend to infinity, then the limits in above definition indeed exist, denoted by $S_{(V,\Delta)}(E)$ and $\delta_{(V,\Delta)}(\ord_E)$.
\item $S$-invariant can be computed by the following integration,
$$S_{(V,\Delta)}(E)=\frac{1}{\vol(-(K_V+\Delta))}\int_0^{\infty}\vol(-(K_V+\Delta)-tE)\dt, $$
$$ \delta_{(V,\Delta)}(\ord_E)=\frac{A_{(V,\Delta)}(E)}{S_{(V,\Delta)}(E)}.$$
\end{enumerate}
\end{lemma}

\begin{definition}
Delta invariant of the log Fano pair $(V,\Delta)$ is defined to be
$$\delta(V,\Delta):=\inf_E\delta_{(V,\Delta)}(\ord_E)=\inf_E \frac{A_{(V,\Delta)}(E)}{S_{(V,\Delta)}(E)}, $$
where $E$ runs through all prime divisors over $V$.
\end{definition}

We sometimes leave out $(V,\Delta)$ in the subscript if there is no confusion. It is clear that the above definition applies to any $\bQ$-ample line bundle $L$ on $V$, and we respectively get $m$-basis type divisors associated to $L$ just by replacing $-(K_V+\Delta)$ by $L$. In this case, we use $S_{m,L}(E)$ and $S_L(E)$ (resp. $\delta_{m,L}(\ord_E)$ and $\delta_L(\ord_E))$ to denote the $S_m$-invariant and $S$-invariant (resp. $m$-th delta invariant and delta invariant), and $\delta(L):=\inf_E \frac{A_{(V,\Delta)}(E)}{S_L(E)}$.

The following result is well known by works \cite{BJ20,FO18}, we just state it here.

\begin{theorem}
The log Fano pair $(V,\Delta)$ is K-semistable if and only if $\delta(V,\Delta)\geq 1$.
\end{theorem}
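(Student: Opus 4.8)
The plan is to deduce the equivalence from the valuative criterion for K-semistability, which is where the real content of \cite{FO18,BJ20} (building on the work of Fujita and Li, see also \cite{Fuj19c,Li2020}) resides. That criterion asserts: $(V,\Delta)$ is K-semistable if and only if
$$\beta_{(V,\Delta)}(E):=A_{(V,\Delta)}(E)-S_{(V,\Delta)}(E)\geq 0$$
for every prime divisor $E$ over $V$. Granting this, the theorem is a one-line manipulation: since $-(K_V+\Delta)$ is ample we have $S_{(V,\Delta)}(E)>0$ for every $E$, so $\delta_{(V,\Delta)}(\ord_E)=A_{(V,\Delta)}(E)/S_{(V,\Delta)}(E)\geq 1$ holds exactly when $\beta_{(V,\Delta)}(E)\geq 0$; taking the infimum over all prime divisors $E$ over $V$, the condition $\delta(V,\Delta)\geq 1$ is equivalent to $\beta_{(V,\Delta)}(E)\geq 0$ for all $E$, i.e. to K-semistability.

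For completeness I would also recall how the valuative criterion itself is proved. The implication ``$\beta_{(V,\Delta)}(E)<0$ for some $E$ $\Rightarrow$ $(V,\Delta)$ not K-semistable'' follows from the basis-type construction of \cite{FO18}: given $E$ with $A_{(V,\Delta)}(E)<S_{(V,\Delta)}(E)$, use that $S_{m,(V,\Delta)}(E)\to S_{(V,\Delta)}(E)$ as $m\to\infty$ (a convergence established in \cite{BJ20} via Okounkov-body techniques) to find, for $m$ sufficiently divisible and large, an $m$-basis type divisor $D_m$ with $\ord_E(D_m)=S_{m,(V,\Delta)}(E)>A_{(V,\Delta)}(E)$; the basis of $H^0(V,-m(K_V+\Delta))$ adapted to the $\ord_E$-filtration then produces a test configuration whose normalized Donaldson--Futaki (equivalently Ding) invariant is negative, so $(V,\Delta)$ is not K-semistable. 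The converse, ``$\beta_{(V,\Delta)}\geq 0$ everywhere $\Rightarrow$ K-semistable'', is the deeper one: starting from a normal test configuration $(\mathcal{V},\Delta_{\mathcal{V}};\mathcal{L})$ with negative Donaldson--Futaki invariant, one normalizes it and runs the MMP on the total space to reach a \emph{special} test configuration, whose central fiber is a log Fano pair carrying a $\bC^*$-action, without increasing the Donaldson--Futaki invariant (the Li--Xu reduction); this special degeneration determines a prime divisor $E$ over $V$ together with an identity of the shape $\mathrm{DF}(\mathcal{V},\Delta_{\mathcal{V}};\mathcal{L})=\beta_{(V,\Delta)}(E)/\vol(-(K_V+\Delta))$, obtained by a Riemann--Roch analysis of the leading Hilbert-polynomial coefficients along the degeneration; hence $\beta_{(V,\Delta)}(E)<0$, a contradiction.

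The main obstacle is exactly this last, deeper direction: it rests on the Li--Xu theory of special test configurations (construction of the special degeneration and control of the Donaldson--Futaki invariant under the MMP steps) and on the precise identification of the Donaldson--Futaki invariant of a special test configuration with $\beta_{(V,\Delta)}(E)$ up to the positive factor $1/\vol(-(K_V+\Delta))$. On the other side, the uniform convergence $S_{m,(V,\Delta)}(E)\to S_{(V,\Delta)}(E)$ is the nontrivial point of the elementary-looking direction and is itself proved by analyzing filtered linear series through Newton--Okounkov bodies. Once these two inputs are in hand, the chain of equivalences in the first paragraph completes the proof.
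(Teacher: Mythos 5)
The paper does not actually prove this theorem: after stating it, the authors immediately write that the result ``is well known by works \cite{BJ20,FO18}'' and leave it at that. So there is no in-paper argument for your proposal to be compared against, only a citation.

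That said, your sketch is a reasonably faithful account of the argument in the cited literature. The reduction to the Fujita--Li valuative criterion, namely that $\delta(V,\Delta)\geq 1$ holds if and only if $\beta_{(V,\Delta)}(E)=A_{(V,\Delta)}(E)-S_{(V,\Delta)}(E)\geq 0$ for every prime divisor $E$ over $V$, is indeed a one-line consequence of $S_{(V,\Delta)}(E)>0$ for ample $-(K_V+\Delta)$, and you correctly isolate the two nontrivial technical inputs (convergence of $S_{m}$ to $S$ via Okounkov-body estimates, and the Li--Xu reduction to special test configurations). One imprecision worth flagging: the test configuration obtained from a finitely generated approximation of the $\ord_E$-filtration does not in general satisfy ``Donaldson--Futaki $=$ Ding''; those two invariants coincide only for \emph{special} test configurations. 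What the filtration construction cleanly controls is the Ding invariant (or, in Fujita's original formulation, a modified invariant accounting for the log canonical threshold of the induced degeneration), and the passage from Ding-instability to K-instability itself relies on the equivalence of K-semistability and Ding-semistability, which is again established via the Li--Xu MMP reduction. So both implications you describe, not just the one you label ``deeper,'' ultimately rest on the special-test-configuration machinery; the division of labor in your paragraph is slightly off, even though the ingredients listed are the right ones.
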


\subsection{The greatest Ricci lower bound}
Let $V$ be a Fano manifold. Then the
greatest Ricci lower bound $\beta(V)$ of $V$ is defined to be
\begin{equation}
    \label{eq:def-beta-xi}
    \beta(V):=\sup\{\beta\in\bR\ |\ \exists\text{ K\"ahler form }\omega\in 2\pi c_1(V)\ \text{s.t. }\Ric(\omega)\geq\beta\omega \}.
\end{equation}
This invariant was first implicitly studied by Tian \cite{Tia92} and then explicitly introduced in \cite{Rub08,Rub09}. 
Recently it is shown independently by \cite{CRZ19} and \cite{BBJ18} that
\begin{equation}
\label{eq:beta-X=min-1-delta}
    \beta(X)=\min\{1,\delta(X)\}.
\end{equation}
Finally we remark that, suppose in addition that there is a semipositive $(1,1)$-current $\theta$ on $X$, then there are analogous results for the $\theta$-twisted $\delta$- and $\beta$-invariants (see \cite{BBJ18} for more information).

\section{Delta invariants of projective bundles of Fano type}\label{section:bundle}

In this section, we will prove Theorem \ref{deltabundle}. We start by the following lemma computing $\delta_{(\tY,aV_0+bV_\infty)}(\ord_{V_0})$ and $\delta_{(\tY,aV_0+bV_\infty)}(\ord_{V_\infty})$.

\begin{lemma}\label{delta0bundle}
Notation as in Theorem \ref{deltabundle}, we have
$$\delta_{(\tY,aV_0+bV_\infty)}(\ord_{V_0})= \frac{1-a}{\frac{n+1}{n+2}\frac{B^{n+2}-A^{n+2}}{B^{n+1}-A^{n+1}}-A}=:\beta_{a,b}, $$
and
$$\delta_{(\tY,aV_0+bV_\infty)}(\ord_{V_\infty})= \frac{1-b}{B-\frac{n+1}{n+2}\frac{B^{n+2}-A^{n+2}}{B^{n+1}-A^{n+1}}}=:\beta_{a,b}'. $$
\end{lemma}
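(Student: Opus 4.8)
The plan is to compute the log discrepancies $A(V_0), A(V_\infty)$ and the $S$-invariants $S(V_0), S(V_\infty)$ directly, using the integration formula from the preliminary lemma, namely $S_{(\tY,\Delta)}(E)=\frac{1}{\vol(-(K_\tY+\Delta))}\int_0^\infty \vol(-(K_\tY+\Delta)-tE)\,dt$, where $\Delta=aV_0+bV_\infty$.

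First I would pin down the relevant intersection theory on $\tY$. By Lemma \ref{lemma 2.1}(4), $-K_\tY \sim_\bQ p^*(-K_V)+V_0+V_\infty$, and using $L\sim_\bQ -\frac1r K_V$ together with $V_\infty-V_0\sim_\bQ p^*L$ one rewrites $-(K_\tY+aV_0+bV_\infty)$ in the form $B\,V_\infty - A\,V_0$ (or equivalently $(1-b)V_\infty+(1-a)V_0 + (r-(1-a))(V_\infty-V_0)$ up to $p^*$-pullback bookkeeping), where $A=r-(1-a)$ and $B=r+(1-b)$; one should check this $\bQ$-divisor is ample so that $\vol$ equals the top self-intersection. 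Then I would establish the key volume computation: for $t\in[0,B-A]$ say, $-(K_\tY+\Delta)-tV_0$ stays in the nef cone and its volume is a polynomial in $t$; the cleanest route is to note that on the $\bP^1$-bundle $\tY=\bP_V(L^{-1}\oplus\mO_V)$ one has $\mO_\tY(1)\sim V_\infty$ with $\vol_V$-normalization, and integrate fiberwise: $(\alpha V_\infty + \beta V_0)^{n+1} = \frac{\alpha^{n+1}-(-\beta)^{\text{something}}}{\dots}(L^n)$ — more precisely, for a class $u\,V_\infty - v\,V_0$ with $u>v>0$ one gets $(u V_\infty - v V_0)^{n+1} = \frac{u^{n+2}-v^{n+2}}{u-v}\cdot\frac{L^n}{(n+1)\cdot\text{(appropriate factor)}}$; I would derive this from $V_\infty|_{V_\infty}\sim L$, $V_0|_{V_0}\sim L^{-1}$, $V_0\cdot V_\infty=0$ via the standard telescoping. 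Substituting, $\vol(-(K_\tY+\Delta)) \propto \frac{B^{n+2}-A^{n+2}}{B-A}$ and subtracting $tV_0$ shifts $A\mapsto A+t$ (keeping $B$ fixed) up to the nef threshold $t = B-A = (1-a)+(1-b) \cdot(\text{hmm, actually } B-A = 2-a-b)$, wait: $B-A = (r+1-b)-(r-1+a) = 2-a-b$. Actually the nef threshold for $V_0$ should be where $B\,V_\infty-(A+t)V_0$ leaves the nef cone, i.e. $A+t = B$... but one must be careful about which wall is hit; near $V_0$ the volume eventually becomes a different polynomial or zero, so I would split $\int_0^\infty$ at the pseudoeffective threshold.

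Next, assembling the pieces: $S(V_0) = \frac{1}{\vol(-(K_\tY+\Delta))}\int_0^{B-A}\left(\text{polynomial in }A+t, B\right)dt$. Carrying out the elementary integral $\int_0^{B-A}\frac{B^{n+2}-(A+t)^{n+2}}{B-(A+t)}\,dt$ and simplifying gives, after the dust settles, $S(V_0) = \frac{n+1}{n+2}\cdot\frac{B^{n+2}-A^{n+2}}{B^{n+1}-A^{n+1}} - A$. The log discrepancy is $A_{(\tY,aV_0+bV_\infty)}(V_0) = 1-a$ since $V_0$ appears in $\Delta$ with coefficient $a$ and has discrepancy $0$ as it is already a divisor on $\tY$ (not extracted). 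Dividing yields $\delta_{(\tY,\Delta)}(\ord_{V_0}) = \frac{1-a}{\frac{n+1}{n+2}\frac{B^{n+2}-A^{n+2}}{B^{n+1}-A^{n+1}}-A} = \beta_{a,b}$. The $V_\infty$ case is entirely symmetric: $A_{(\tY,\Delta)}(V_\infty)=1-b$, and running the same computation (now shifting $B\mapsto B-t$, keeping $A$ fixed, up to threshold $t=B-A$) gives $S(V_\infty) = B - \frac{n+1}{n+2}\frac{B^{n+2}-A^{n+2}}{B^{n+1}-A^{n+1}}$, hence $\delta_{(\tY,\Delta)}(\ord_{V_\infty}) = \beta_{a,b}'$.

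I expect the main obstacle to be the bookkeeping in the volume function: correctly identifying the nef and pseudoeffective thresholds of $-(K_\tY+\Delta)-tV_0$ (and of $-tV_\infty$), confirming the volume is given by the single clean polynomial formula on the whole interval $[0, B-A]$ rather than being piecewise, and verifying the normalization constant $(L^n)$ cancels between numerator and denominator. A secondary subtlety is justifying $A_{(\tY,\Delta)}(V_0)=1-a$ cleanly — this is immediate from the definition of log discrepancy for a component of the boundary, but worth stating. The actual integral $\int_0^{B-A}\frac{B^{n+2}-(A+t)^{n+2}}{B-(A+t)}\,dt$ is elementary (substitute $u=A+t$, expand the geometric-type sum, integrate term by term), so I would present it compactly rather than grinding through it.
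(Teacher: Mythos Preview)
Your overall strategy---computing the log discrepancies directly and evaluating $S(V_0),S(V_\infty)$ via the volume integral on the $\bP^1$-bundle---is exactly the paper's approach. The pseudoeffective threshold is indeed $B-A=2-a-b$, the volume is given by a single polynomial on $[0,B-A]$, and the $L^n$ factor cancels, so all of that is fine.

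However, your volume formula is wrong and would not produce the answer you state. Since $V_0\cdot V_\infty=0$, $V_\infty^{n+1}=L^n$ and $V_0^{n+1}=(-1)^nL^n$, the top self-intersection is
\[
(uV_\infty-vV_0)^{n+1}=(u^{n+1}-v^{n+1})\,L^n,
\]
not $\dfrac{u^{n+2}-v^{n+2}}{u-v}\cdot(\text{constant})$. Equivalently, the paper writes $-(K_{\tY}+\Delta)\sim_\bQ B\,p^*L+(B-A)V_0$ and expands via the binomial theorem to get $\vol(-(K_{\tY}+\Delta)-tV_0)=L^n\big(B^{n+1}-(A+t)^{n+1}\big)$. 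Hence $\vol(-(K_{\tY}+\Delta))=L^n(B^{n+1}-A^{n+1})$, not something proportional to $\dfrac{B^{n+2}-A^{n+2}}{B-A}$, and the integrand in $S(V_0)$ is $B^{n+1}-(A+t)^{n+1}$, not $\dfrac{B^{n+2}-(A+t)^{n+2}}{B-(A+t)}$. With your integrand the ratio does not simplify to the claimed $S(V_0)$ (already for $n=0$ it gives $\dfrac{(B-A)(3B+A)}{2(B+A)}$ instead of $\dfrac{B-A}{2}$). Once you replace the integrand by the correct one, the integral
\[
\frac{1}{B^{n+1}-A^{n+1}}\int_0^{B-A}\big(B^{n+1}-(A+t)^{n+1}\big)\,dt=\frac{n+1}{n+2}\cdot\frac{B^{n+2}-A^{n+2}}{B^{n+1}-A^{n+1}}-A
\]
is immediate, and the $V_\infty$ case follows by the symmetric substitution $B\mapsto B-t$ exactly as you say.
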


\begin{proof}
It is clear that $A_{(\tY,aV_0+bV_\infty)}(V_0)=1-a, A_{(\tY,aV_0+bV_\infty)}(V_\infty)=1-b$, and by Lemma \ref{lemma 2.1},
\begin{align}\label{supV0}
-(K_{\tY}+aV_0+bV_\infty)\sim_\bQ (r+1-b)p^*L+(2-a-b)V_0.
\end{align}
Denote $\vol(t):=\vol((r+1-b)p^*L+(2-a-b-t)V_0)$, then we have
\begin{align*}
&S_{(\tY,aV_0+bV_\infty)}(V_0)\\
=&\frac{1}{\vol(-(K_{\tY}+aV_0+bV_\infty))}\int_0^{\infty}\vol(-(K_{\tY}+aV_0+bV_\infty)-tV_0)\dt\\
=&\frac{1}{\vol(t=0)}\int_0^{2-a-b}\vol((r+1-b)p^*L+(2-a-b-t)V_0)\\
=&\frac{L^n}{\vol(t=0)}\int_0^{2-a-b}\sum_{j=1}^{n+1}\binom{n+1}{j}(r+1-b)^{n+1-j}(2-a-b-t)^j(-1)^{j+1}\dt\\
=&\frac{L^n}{\vol(t=0)}\int_0^{2-a-b}-(r+1-b-(2-a-b-t))^{n+1}+(r+1-b)^{n+1}\dt\\
=&\frac{1}{B^{n+1}-A^{n+1}}\int_0^{B-A}(B^{n+1}-(A+t)^{n+1})\dt\\
=&\frac{n+1}{n+2}\frac{B^{n+2}-A^{n+2}}{B^{n+1}-A^{n+1}}-A.
\end{align*}
 Similarly we have
$$S_{(\tY,aV_0+bV_\infty)}(V_\infty) =B-\frac{n+1}{n+2}\frac{B^{n+2}-A^{n+2}}{B^{n+1}-A^{n+1}}.$$
The proof is finished.
\end{proof}

Note that the above lemma is essentially a Futaki invariant computation under a fancy $\delta$-invariant guise for the natural $\bC^*$-action on $\tilde{Y}$ (see the proof of Lemma \ref{lem:V_0-compute-beta_0}), and it tells us that $\delta(\tY,aV_0+bV_\infty)\leq \min\{\beta_{a,b},\beta_{a,b}'\}$, we next show that $\delta(\tY,aV_0+bV_\infty)\leq \frac{r\delta(V)}{\frac{n+1}{n+2}\frac{B^{n+2}-A^{n+2}}{B^{n+1}-A^{n+1}}}$. 

\begin{lemma}\label{upperbundle}
Notation as in Theorem \ref{deltabundle}, we have
$$\delta(\tY,aV_0+bV_\infty)\leq \frac{r\delta(V)}{\frac{n+1}{n+2}\frac{B^{n+2}-A^{n+2}}{B^{n+1}-A^{n+1}}}. $$
\end{lemma}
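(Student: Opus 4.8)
The plan is to produce, from an arbitrary prime divisor $F$ over $V$, a single prime divisor $\bar F$ over $\tY$ realizing the bound, and then take an infimum over $F$. After replacing the model by a resolution, I may assume $F$ is a smooth prime divisor on a smooth model $g\colon W\to V$. Put $\widetilde W:=\bP_W(g^*L^{-1}\oplus\mO_W)=W\times_V\tY$, with projections $q_1\colon\widetilde W\to W$ (itself a $\bP^1$-bundle) and $q_2\colon\widetilde W\to\tY$ (proper birational); write $\pi:=p\circ q_2=g\circ q_1$, let $\widetilde V_0,\widetilde V_\infty$ be the zero and infinity sections of $q_1$, and set $\bar F:=q_1^*F=q_1^{-1}(F)$, a prime divisor over $\tY$. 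One checks $q_2^*V_0=\widetilde V_0$ and $q_2^*V_\infty=\widetilde V_\infty$ (the strict transforms, with multiplicity one, since $q_2$ is an isomorphism near their generic points). Writing $W_0:=\tfrac{n+1}{n+2}\tfrac{B^{n+2}-A^{n+2}}{B^{n+1}-A^{n+1}}$, the identity to establish is
\[
\frac{A_{(\tY,aV_0+bV_\infty)}(\bar F)}{S_{(\tY,aV_0+bV_\infty)}(\bar F)}=\frac{r}{W_0}\cdot\frac{A_V(F)}{S_V(F)},
\]
and then, since $\delta(\tY,aV_0+bV_\infty)\le A_{(\tY,aV_0+bV_\infty)}(\bar F)/S_{(\tY,aV_0+bV_\infty)}(\bar F)$ holds for every $F$, taking the infimum over all prime divisors $F$ over $V$ gives $\delta(\tY,aV_0+bV_\infty)\le \tfrac{r}{W_0}\,\delta(V)$, which is the assertion.

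For the log discrepancy: by Lemma~\ref{lemma 2.1}(4) applied to $(W,g^*L)$ one has $K_{\widetilde W}\sim q_1^*K_W-\widetilde V_0-\widetilde V_\infty$, while $q_1^*K_W\sim\pi^*K_V+\sum_i(A_V(F_i)-1)\bar F_i$, and by the same statement for $(V,L)$, $q_2^*K_{\tY}\sim\pi^*K_V-\widetilde V_0-\widetilde V_\infty$. Subtracting, $K_{\widetilde W}-q_2^*K_{\tY}\sim\sum_i(A_V(F_i)-1)\bar F_i$; since moreover $q_2^*(aV_0+bV_\infty)=a\widetilde V_0+b\widetilde V_\infty$ has vanishing coefficient along $\bar F$, taking $\ord_{\bar F}$ yields $A_{(\tY,aV_0+bV_\infty)}(\bar F)=A_V(F)$.

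For the $S$-invariant: by Lemma~\ref{lemma 2.1}, $M:=-(K_{\tY}+aV_0+bV_\infty)\sim_\bQ B\,p^*L+(B-A)V_0$, hence $q_2^*M-t\bar F\sim_\bQ q_1^*(B\,g^*L-tF)+(B-A)\widetilde V_0$. The computational crux is the fibration volume identity
\[
\vol_{\widetilde W}\big(q_1^*N+c\,\widetilde V_0\big)=(n+1)\int_0^{c}\vol_W\big(N-s\,g^*L\big)\,ds,
\]
valid for any $\bQ$-divisor $N$ on $W$ and any $c>0$; I would prove it by computing $q_{1*}\mO_{\widetilde W}(mc\,\widetilde V_0)=\bigoplus_{i=0}^{mc}(g^*L)^{-i}$, so that $h^0\big(\widetilde W,m(q_1^*N+c\widetilde V_0)\big)=\sum_{i=0}^{mc}h^0(W,mN-i\,g^*L)$, and then letting $m\to\infty$ and recognizing the right-hand side as a Riemann sum. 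Applying this with $N=B\,g^*L-tF$ and $c=B-A$, using the homogeneity $\vol_W((B-s)g^*L-tF)=(B-s)^n\vol_W\big(g^*L-\tfrac{t}{B-s}F\big)$, interchanging the $s$- and $t$-integrals (substitute $u=t/(B-s)$), and computing $\vol(M)=L^n(B^{n+1}-A^{n+1})$ by the same formula, one obtains
\[
S_{(\tY,aV_0+bV_\infty)}(\bar F)=\frac{(n+1)\int_0^{B-A}(B-s)^{n+1}\,ds}{B^{n+1}-A^{n+1}}\cdot S_L(F)=W_0\cdot S_L(F),
\]
where $S_L(F)$ is the $S$-invariant of $F$ with respect to the polarization $L$ on $V$. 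Finally $S_L(F)=\tfrac1r S_V(F)$, because $-K_V\sim_\bQ rL$; combining this with the log discrepancy computation gives the identity.

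The hard part is the second step: turning the volume of $M-t\bar F$ on the $(n+1)$-dimensional $\widetilde W$ into an $s$-integral of volumes of divisors on $W$ pulled back from $V$ — which is precisely what makes $S_V(F)$, and hence $\delta(V)$, surface — together with the careful justification of the Riemann-sum limit (via asymptotic Riemann--Roch for the relevant family of line bundles) and of the Fubini interchange. One should also record that the hypotheses on $a$ are exactly what guarantees $A=r-(1-a)>0$ (immediate if $r>1$; equivalent to $a>1-r$ if $0<r\le 1$), so that $B-s\ge A>0$ for all $s\in[0,B-A]$ and all the volumes above are volumes of honest $\bQ$-divisor classes.
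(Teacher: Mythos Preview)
Your proof is correct and yields the same inequality, but the packaging differs from the paper's. The paper argues via $m$-basis type divisors: it decomposes $H^0(\tY,-m(K_{\tY}+aV_0+bV_\infty))=\bigoplus_j\tilde R_{m,j}$ according to $\ord_{V_0}$, identifies each graded piece with $H^0(V,(mA+j)L)$, builds an $m$-basis type divisor $\tilde{\mD}_m$ from bases of the pieces adapted to $E$, and obtains $S_{m,(\tY,aV_0+bV_\infty)}(E_{\tY})\ge\ord_{E_{\tY}}(\tilde{\mD}_m)$; the limit $m\to\infty$ then gives $S(E_{\tY})\ge W_0\cdot S_L(E)$. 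You instead compute $S(\bar F)$ directly from the volume integral, using the pushforward $q_{1*}\mO_{\widetilde W}(k\widetilde V_0)=\bigoplus_{i=0}^{k}(g^*L)^{-i}$ to obtain the fibration volume identity, and then a Fubini/substitution step. Both routes rest on the same underlying $\bP^1$-bundle decomposition of global sections; yours is the ``continuous'' formulation, the paper's the ``discrete'' one.

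One practical difference: your computation gives the exact equality $S_{(\tY,aV_0+bV_\infty)}(\bar F)=W_0\cdot S_L(F)$ in a single pass, whereas the paper's Lemma~\ref{upperbundle} only records the inequality $\ge$ (sufficient for the upper bound) and defers the equality to Lemma~\ref{Tcompatible}, where it is deduced from the $\bC^*$-invariance of $\ord_{E_{\tY}}$. So your argument in effect proves Lemmas~\ref{upperbundle} and~\ref{Tcompatible} simultaneously. Conversely, the paper's basis-type formulation makes the later analysis of arbitrary $\bC^*$-invariant valuations (in the proof of Theorem~\ref{deltabundle}) more transparent, since the graded decomposition $\bigoplus_j\tilde R_{m,j}$ is already in hand.
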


\begin{proof}
We choose sufficiently divisible $m$ such that $ma,mb$ and $mr$ are all integers, and by Lemma \ref{lemma 2.1} we have
\begin{align}\label{bundleexpansion}
-m(K_{\tilde{Y}}+aV_0+bV_\infty)\sim (mr-m+ma+j)p^*L+(2m-ma-mb-j)V_\infty+jV_0.
\end{align}
Let $\tilde{R}_m$ denote  $H^0(\tilde{Y},-m(K_{\tilde{Y}}+aV_0+bV_\infty))$, we define
$$ \tilde{R}_{m,j}:=\{s\in \tilde{R}_m|\text{$\ord_{V_0}(s)=j$}\}.$$
By (\ref{supV0}), one sees that
$$\tilde{R}_m=\bigoplus^{m(2-a-b)}_{j=0} \tilde{R}_{m,j}, $$
and by (\ref{bundleexpansion})
\begin{align}
\tilde{R}_{m,j}\cong H^0(V, (mr-m+ma+j)L) .
\end{align}
Let $E$ be a prime divisor over $V$ and $E_{\tilde{Y}}$ the natural extended divisor over $\tilde{Y}$, then we want to explore the relationship between $\delta_L(\ord_E)$ and $\delta_{(\tilde{Y},aV_0+bV_\infty)}(\ord_{E_{\tilde{Y}}})$.
As it is clear that $A_V(E)=A_{(\tilde{Y},aV_0+bV_\infty)}(E_{\tilde{Y}})$, it suffices to explore the relationship between $S_L(E)$ and $S_{(\tilde{Y},aV_0+bV_\infty)}(E_{\tilde{Y}})$.
We first construct a special $m$-basis type divisor of $-(K_{\tilde{Y}}+aV_0+bV_\infty)$. 

Write $\tilde{r}_{m,j}=\dim \tilde{R}_{m,j}=\dim H^0(V,(mr-m+ma+j)L)$, we can choose an $mr-m+ma+j$-basis type divisor $\tilde{D}_{m,j}$ of $L$ such that $\tilde{D}_{m,j}$ computes $S_{mr-m+ma+j,L}(E)$. In fact, $\tilde{D}_{m,j}$ is created by the filtration induced by $\ord_E$ on $H^0(V,(mr-m+ma+j)L)$. Then we lift $\tilde{D}_{m,j}$ to be a divisor $\tilde{\mD}_{m,j}$ over $\tilde{Y}$. By (\ref{bundleexpansion}), it is not hard to see that 
\begin{align}
\tilde{\mD}_m:=\frac{\sum_{j=0}^{m(2-a-b)}\tilde{r}_{m,j}((mr-m+ma+j)\tilde{\mD}_{m,j}+jV_0+(2m-am-bm-j)V_\infty)}{m\sum_{j=0}^{m(2-a-b)} \tilde{r}_{m,j}} 
\end{align}
 is an $m$-basis type divisor of $-(K_{\tilde{Y}}+aV_0+bV_\infty)$. Then we have 
 \begin{align}\label{maxbasisdiv}
 S_{m,(\tY,aV_0+bV_\infty)}(E_{\tilde{Y}})\geq \ord_{E_{\tilde{Y}}}(\tilde{\mD}_m)=\frac{\sum_{j=0}^{m(2-a-b)}(mr-m+ma+j)\tilde{r}_{m,j}S_{mr-m+ma+j,L}(E)}{m\sum_{j=0}^{m(2-a-b)}\tilde{r}_{m,j}}.
 \end{align}
As $m$ tends to infinity, by the computation of $\sum_{j=0}^{m(2-a-b)}j\tilde{r}_{m,j}  $ and $\sum_{j=0}^{m(2-a-b)}m\tilde{r}_{m,j}$ in the next lemma, one directly obtains that
\begin{align}\label{Smaxbasisdiv}
S_{(\tilde{Y}, aV_0+bV_\infty)}(E_{\tilde{Y}})\geq  \frac{n+1}{n+2}\frac{B^{n+2}-A^{n+2}}{B^{n+1}-A^{n+1}}\cdot S_L(E),
\end{align}
where $B=r+(1-b)$ and $A=r-(1-a)$, hence we have
\begin{align}\label{deltamaxbasisdiv}
\delta_{(\tilde{Y},aV_0+bV_\infty)}(\ord_{E_{\tilde{Y}}}) \leq \frac{1}{\frac{n+1}{n+2}\frac{B^{n+2}-A^{n+2}}{B^{n+1}-A^{n+1}}}\delta_L(\ord_E).
\end{align}
As the prime divisor $E$ is arbitrarily chosen, we have
$$\delta(\tY, aV_0+bV_\infty)\leq  \frac{1}{\frac{n+1}{n+2}\frac{B^{n+2}-A^{n+2}}{B^{n+1}-A^{n+1}}}\delta(L)=  \frac{r\delta(V)}{\frac{n+1}{n+2}\frac{B^{n+2}-A^{n+2}}{B^{n+1}-A^{n+1}}}.
$$
\end{proof}

\begin{lemma}\label{lem:computation}
Notation as in the proof of Lemma \ref{upperbundle}, we have
$$\lim_{m\to \infty}\frac{\sum_{j=0}^{m(2-a-b)}j\tilde{r}_{m,j}}{m\sum_{j=0}^{m(2-a-b)}\tilde{r}_{m,j}} =\frac{n+1}{n+2}\frac{B^{n+2}-A^{n+2}}{B^{n+1}-A^{n+1}}-A.$$
\end{lemma}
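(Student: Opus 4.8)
The plan is to evaluate the two sums $\sum_{j=0}^{m(2-a-b)} j\,\tilde r_{m,j}$ and $m\sum_{j=0}^{m(2-a-b)}\tilde r_{m,j}$ asymptotically as $m\to\infty$, using the identification $\tilde r_{m,j}=\dim H^0(V,(mr-m+ma+j)L)$ together with the asymptotic Riemann--Roch estimate
\begin{equation*}
\dim H^0(V,kL)=\frac{L^n}{n!}k^n+O(k^{n-1})\qquad(k\to\infty).
\end{equation*}
Substituting $k=mr-m+ma+j$ and writing $j=mt$ with $t$ ranging over $[0,2-a-b]$, each sum becomes, up to lower-order terms, a Riemann sum for an integral. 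Concretely, after dividing numerator and denominator by $m^{n+2}$, I expect
\begin{equation*}
\frac{\sum_{j} j\,\tilde r_{m,j}}{m\sum_j \tilde r_{m,j}}\longrightarrow \frac{\int_0^{2-a-b} t\,(r-1+a+t)^n\,dt}{\int_0^{2-a-b}(r-1+a+t)^n\,dt},
\end{equation*}
the factors $L^n/n!$ cancelling between the two. I would then perform the substitution $u=r-1+a+t$, noting that $t=0$ gives $u=r-1+a=r-(1-a)=A$ and $t=2-a-b$ gives $u=r+1-b=B$, so that $t=u-A$ and the quotient becomes $\big(\int_A^B (u-A)u^n\,du\big)/\big(\int_A^B u^n\,du\big)$.

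The final step is the elementary integral evaluation. One has $\int_A^B u^n\,du=\frac{B^{n+1}-A^{n+1}}{n+1}$ and $\int_A^B (u-A)u^n\,du=\int_A^B u^{n+1}\,du-A\int_A^B u^n\,du=\frac{B^{n+2}-A^{n+2}}{n+2}-A\frac{B^{n+1}-A^{n+1}}{n+1}$, so the ratio simplifies to
\begin{equation*}
\frac{\frac{B^{n+2}-A^{n+2}}{n+2}-A\frac{B^{n+1}-A^{n+1}}{n+1}}{\frac{B^{n+1}-A^{n+1}}{n+1}}=\frac{n+1}{n+2}\cdot\frac{B^{n+2}-A^{n+2}}{B^{n+1}-A^{n+1}}-A,
\end{equation*}
which is exactly the claimed limit.

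The only real point requiring care — and the step I would treat as the main (mild) obstacle — is justifying that the error terms genuinely disappear in the limit, i.e. that replacing $\dim H^0(V,kL)$ by $\frac{L^n}{n!}k^n$ and replacing the sums by integrals each introduce only $o(m^{n+2})$ error in the numerator and $o(m^{n+1})$ in the denominator. This is routine: the Riemann--Roch remainder contributes $O(k^{n-1})=O(m^{n-1})$ per term over $O(m)$ terms weighted by $j=O(m)$, hence $O(m^{n+1})=o(m^{n+2})$ in the numerator (and similarly $o(m^{n+1})$ in the denominator), while the difference between the Riemann sum with mesh $1/m$ and the integral of the continuous integrand $t\mapsto t(r-1+a+t)^n$ on the compact interval $[0,2-a-b]$ is also of lower order. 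One should also note $mr-m+ma+j\ge 0$ throughout (since $r>1$ forces $r-1+a>0$, and in the case $0<r\le1$ the hypothesis $1-r<a$ gives $r-1+a>0$ as well), so the large-$k$ asymptotics apply uniformly. Modulo these standard estimates, the computation is purely a change of variables and an integral.
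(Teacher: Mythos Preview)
Your proof is correct and follows essentially the same route as the paper: both compute the numerator and denominator separately by recognizing them as Riemann sums for $\int_0^{B-A} t\,\vol((A+t)L)\,dt$ and $\int_0^{B-A}\vol((A+t)L)\,dt$ (with $\vol((A+t)L)=(A+t)^nL^n$), then take the ratio. Your write-up is in fact slightly more careful than the paper's, which simply asserts the passage to the integral without discussing the $O(k^{n-1})$ error or the positivity of $mA+j$.
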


\begin{proof}
The lemma is concluded by the following two computations
\begin{align*}
 &\lim_{m\to\infty}\frac{\sum_{j=0}^{m(B-A)} jh^0(V,(mA+j)L)}{m^{n+2}/n!}
 =\lim_{m\to\infty}\sum_j\frac{j}{m}\frac{h^0(V,(mA+j)L)}{m^n/n!}\frac{1}{m}\\
=&\int_0^{B-A}\vol((A+t)L)t\dt
=L^n\cdot \bigg\{\frac{B^{n+2}-A^{n+2}}{n+2}-A\frac{B^{n+1}-A^{n+1}}{n+1} \bigg\},
\end{align*}
and 
\begin{align*}
 &\lim_{m\to\infty}\frac{\sum_{j=0}^{m(B-A)} mh^0(V,(mA+j)L)}{m^{n+2}/n!}
 =\lim_{m\to\infty}\sum_j\frac{h^0(V,(mA+j)L)}{m^n/n!}\frac{1}{m}\\
=&\int_0^{B-A}\vol((A+t)L)\dt
=L^n\cdot \frac{B^{n+1}-A^{n+1}}{n+1}.
\end{align*}
\end{proof}

Combine Lemma \ref{delta0bundle} and \ref{upperbundle}, we have the following result on upper bound. 

\begin{theorem}
Notation as in Theorem \ref{deltabundle}, we have
\begin{align}
\delta(\tilde{Y},aV_0+bV_\infty)\leq \min\bigg\{\frac{r\delta(V)}{\frac{n+1}{n+2}\frac{B^{n+2}-A^{n+2}}{B^{n+1}-A^{n+1}}}, \frac{1-a}{\frac{n+1}{n+2}\frac{B^{n+2}-A^{n+2}}{B^{n+1}-A^{n+1}}-A}, \frac{1-b}{B-\frac{n+1}{n+2}\frac{B^{n+2}-A^{n+2}}{B^{n+1}-A^{n+1}}-A}\bigg\}.  
\end{align}
In particular, when $a=b=0$ (in this case, $r>1$ automatically), we have 
\begin{align}
\delta(\tilde{Y})\leq \min\bigg\{\frac{r\delta(V)}{\frac{n+1}{n+2}\frac{(r+1)^{n+2}-(r-1)^{n+2}}{(r+1)^{n+1}-(r-1)^{n+1}}}, \frac{1}{\frac{n+1}{n+2}\frac{(r+1)^{n+2}-(r-1)^{n+2}}{(r+1)^{n+1}-(r-1)^{n+1}}-(r-1)}\bigg\}.
\end{align}
\end{theorem}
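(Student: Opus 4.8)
The statement to be proved is the upper bound $\delta(\tilde Y, aV_0+bV_\infty)\le\min\{\ldots\}$, with the displayed special case $a=b=0$ following by substituting $A=r-1$, $B=r+1$. My plan is simply to assemble the three bounds already established in the preceding lemmas and take their minimum. Concretely: the first term $\frac{r\delta(V)}{\frac{n+1}{n+2}\frac{B^{n+2}-A^{n+2}}{B^{n+1}-A^{n+1}}}$ is exactly the conclusion of Lemma~\ref{upperbundle}; the second term is $\beta_{a,b}=\delta_{(\tilde Y,aV_0+bV_\infty)}(\ord_{V_0})$ from Lemma~\ref{delta0bundle}; and the third term is $\beta_{a,b}'=\delta_{(\tilde Y,aV_0+bV_\infty)}(\ord_{V_\infty})$, also from Lemma~\ref{delta0bundle}. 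Since $\delta(\tilde Y,aV_0+bV_\infty)=\inf_F\delta_{(\tilde Y,aV_0+bV_\infty)}(\ord_F)$ over all prime divisors $F$ over $\tilde Y$, and $V_0$, $V_\infty$ are in particular such divisors, we immediately get $\delta(\tilde Y,aV_0+bV_\infty)\le\min\{\beta_{a,b},\beta_{a,b}'\}$; combining with Lemma~\ref{upperbundle} gives the three-way minimum.

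One cosmetic point: the third slot in the displayed bound is written with an extra $-A$ in the denominator, i.e. $\frac{1-b}{B-\frac{n+1}{n+2}\frac{B^{n+2}-A^{n+2}}{B^{n+1}-A^{n+1}}-A}$, whereas Lemma~\ref{delta0bundle} gives $\beta_{a,b}'=\frac{1-b}{B-\frac{n+1}{n+2}\frac{B^{n+2}-A^{n+2}}{B^{n+1}-A^{n+1}}}$ without the $-A$; one should check which normalization is intended (it looks like a typo in the theorem statement, since $V_\infty$ itself gives the cleaner expression), but either way the bound via $\ord_{V_\infty}$ holds and I would just record $\beta_{a,b}'$ as in the lemma. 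For the special case $a=b=0$, note $r>1$ is forced because $(\tilde Y,0)$ being log Fano already requires the zero section to have positive log discrepancy coefficient, which by Lemma~\ref{lemma 2.1}(4)--(5) corresponds to $r>1$; then $A=r-1>0$, $B=r+1$, and the third term simplifies. In fact one observes $\beta_{0,0}'\ge\beta_{0,0}$ (equivalently the $V_\infty$ bound is dominated by the $V_0$ bound) using $A+B=2r>0$ and a direct comparison of $\frac{n+1}{n+2}\frac{B^{n+2}-A^{n+2}}{B^{n+1}-A^{n+1}}$ with the midpoint $\frac{A+B}{2}$; this is why the $a=b=0$ formula drops the third term.

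There is essentially no obstacle here — this theorem is a bookkeeping corollary of Lemmas~\ref{delta0bundle}, \ref{upperbundle}, and \ref{lem:computation}. The only thing requiring a line of justification is the elementary inequality showing the $V_\infty$-term is redundant when $a=b=0$: one wants $B-\frac{n+1}{n+2}\frac{B^{n+2}-A^{n+2}}{B^{n+1}-A^{n+1}}\ge\frac{n+1}{n+2}\frac{B^{n+2}-A^{n+2}}{B^{n+1}-A^{n+1}}-A$, i.e. $\frac{n+1}{n+2}\frac{B^{n+2}-A^{n+2}}{B^{n+1}-A^{n+1}}\le\frac{A+B}{2}$, which follows because the function $t\mapsto t^{n+1}$ is convex so its average over $[A,B]$, namely $\frac{1}{B-A}\int_A^B t^{n+1}\,dt=\frac{1}{n+2}\frac{B^{n+2}-A^{n+2}}{B-A}$, when weighted appropriately stays below the value at the midpoint of... more carefully, one writes $\frac{n+1}{n+2}\frac{B^{n+2}-A^{n+2}}{B^{n+1}-A^{n+1}}=\frac{\int_A^B (n+1)t^{n+1}\,dt}{\int_A^B(n+1)t^n\,dt}$ and argues this weighted average of $t$ (with increasing weight $t^n$) still lies below $\frac{A+B}{2}$ when $t^{n}$-weights are compared against a symmetric reference — in practice the cleanest route is to clear denominators and expand via the binomial theorem, as the authors do for \eqref{eq:beta_0<1}. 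I would present the theorem's proof in two sentences citing the three lemmas, and relegate this convexity comparison to a remark or to the paragraph following \eqref{eq:beta_0<1} where $\beta_0\in(1/2,1)$ is already invoked.
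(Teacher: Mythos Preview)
Your main argument is correct and matches the paper exactly: the theorem is stated immediately after Lemma~\ref{lem:computation} with the one-line justification ``Combine Lemma~\ref{delta0bundle} and~\ref{upperbundle}'', and you reproduce precisely this. You also correctly flag the stray $-A$ in the third denominator as a typo (compare with the statement of Theorem~\ref{deltabundle}).

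One slip in your side remark about dropping the $V_\infty$ term when $a=b=0$: you have the inequality reversed. You want $\beta_{0,0}\le\beta_{0,0}'$, i.e.\ $S(V_0)\ge S(V_\infty)$, i.e.
\[
\frac{n+1}{n+2}\,\frac{B^{n+2}-A^{n+2}}{B^{n+1}-A^{n+1}}\;\ge\;\frac{A+B}{2},
\]
not $\le$. Your integral interpretation is the right tool: writing the left side as $M=\dfrac{\int_A^B t^{n+1}\,dt}{\int_A^B t^{n}\,dt}$, this is the mean of $t$ on $[A,B]$ against the \emph{increasing} weight $t^n$ (here $A=r-1>0$), so the mean is pulled toward $B$ and hence $M\ge\tfrac{A+B}{2}$. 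The paper instead appeals to the Fano index bound $r\le n+1$ for smooth $V$ (see the paragraph following Theorem~\ref{deltabundle}); your weighted-mean argument, once corrected, actually gives the inequality for all $r>1$ without that hypothesis.
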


Now we turn to the converse direction. First recall that in the proof of Lemma \ref{upperbundle}, we construct a special $m$-basis type divisor $\tilde{\mD}_m$ for $-(K_{\tY}+aV_0+bV_\infty)$, however, we don't know whether it is compatible with $E_{\tY}$, so we only have $S_{m,(\tY,aV_0+bV_\infty)}(E_{\tY})\geq \ord_{E_{\tY}}(\tilde{\mD}_m)$ in (\ref{maxbasisdiv}). Once this is indeed an 
equality, so are (\ref{Smaxbasisdiv}) and (\ref{deltamaxbasisdiv}), which will lead to the final proof of Theorem \ref{deltabundle}.

\begin{lemma}\label{Tcompatible}
Notation as in the proof of Lemma \ref{upperbundle}, the m-basis divisor $\tilde{\mD}_m$ we construct is compatible with $E_{\tY}$, that is 
$$S_{m,(\tY,aV_0+bV_\infty)}(E_{\tY})= \ord_{E_{\tY}}(\tilde{\mD}_m),$$
and
$$\delta_{(\tilde{Y},aV_0+bV_\infty)}(\ord_{E_{\tilde{Y}}}) = \frac{1}{\frac{n+1}{n+2}\frac{B^{n+2}-A^{n+2}}{B^{n+1}-A^{n+1}}}\delta_L(\ord_E)=\frac{r}{\frac{n+1}{n+2}\frac{B^{n+2}-A^{n+2}}{B^{n+1}-A^{n+1}}}\delta_V(\ord_E).$$
\end{lemma}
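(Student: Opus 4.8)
The plan is to show that the special $m$-basis type divisor $\tilde{\mD}_m$ constructed in the proof of Lemma \ref{upperbundle} actually realizes the supremum defining $S_{m,(\tY,aV_0+bV_\infty)}(E_{\tY})$, i.e.\ that it is compatible with $E_{\tY}$ in the sense of \cite{BJ20}. The key structural input is the $\bC^*$-action on $\tilde{Y}$ along the fibers of $p$, which induces a weight decomposition $\tilde{R}_m = \bigoplus_{j=0}^{m(2-a-b)} \tilde{R}_{m,j}$ with $\tilde{R}_{m,j} \cong H^0(V,(mr-m+ma+j)L)$, where the weight $j$ records exactly $\ord_{V_0}$. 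Since $E_{\tY}$ is the pullback of $E$ under $p$ (composed with the relevant birational model), the valuation $\ord_{E_{\tY}}$ is itself $\bC^*$-invariant and is determined fiberwise by $\ord_E$; in particular $\ord_{E_{\tY}}$ is compatible with the weight grading, so a basis of $\tilde{R}_m$ that is simultaneously adapted to the weight filtration and to the $\ord_E$-filtration on each graded piece will compute $S_{m,(\tY,aV_0+bV_\infty)}(E_{\tY})$.

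The steps I would carry out, in order, are as follows. First, record the general principle (from \cite{BJ20}, see also \cite{Golota19}) that for a linear series admitting a $\bT$-action and a $\bT$-invariant divisorial valuation $v$, there exists an $m$-basis type divisor computing $S_m(v)$ that is a sum of $\bT$-semi-invariant sections; and more precisely, if $v$ respects a grading of the linear series then one may first split off the grading and then refine within each graded piece. Second, verify that $\ord_{E_{\tY}}$ is $\bC^*$-invariant and "diagonal" with respect to the weight decomposition: because $E$ lives over $V$ and $E_{\tY}$ is its natural extension along the $\bP^1$-bundle, a local computation (or the fact that $\tilde{X} = \tY \setminus V_\infty$ is an $\bA^1$-bundle over $V$, cf.\ Lemma \ref{conelemma}(1)) shows $\ord_{E_{\tY}}(s) = \ord_E(\bar{s})$ for a section $s \in \tilde{R}_{m,j}$ with image $\bar{s} \in H^0(V,(mr-m+ma+j)L)$, independent of the $V_0$- and $V_\infty$-contributions. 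Third, observe that $\tilde{\mD}_{m,j}$ was chosen (via the $\ord_E$-filtration on $H^0(V,(mr-m+ma+j)L)$) precisely to compute $S_{mr-m+ma+j,L}(E)$; hence assembling the $\tilde{\mD}_{m,j}$'s together with the fixed components $jV_0 + (2m-am-bm-j)V_\infty$ yields a basis of $\tilde{R}_m$ adapted to both $\ord_{V_0}$ and $\ord_{E_{\tY}}$, so that $\ord_{E_{\tY}}(\tilde{\mD}_m)$ is not merely a lower bound but equals $S_{m,(\tY,aV_0+bV_\infty)}(E_{\tY})$. Fourth, let $m \to \infty$: combined with Lemma \ref{lem:computation}, the inequalities (\ref{Smaxbasisdiv}) and (\ref{deltamaxbasisdiv}) become equalities, giving
$$S_{(\tY,aV_0+bV_\infty)}(E_{\tY}) = \frac{n+1}{n+2}\frac{B^{n+2}-A^{n+2}}{B^{n+1}-A^{n+1}}\cdot S_L(E),$$
and dividing by the (equal) log discrepancies $A_V(E) = A_{(\tY,aV_0+bV_\infty)}(E_{\tY})$, together with $\delta(L) = r\delta(V)$, yields the displayed identity for $\delta_{(\tY,aV_0+bV_\infty)}(\ord_{E_{\tY}})$.

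The main obstacle I anticipate is the compatibility/maximality claim $S_{m,(\tY,aV_0+bV_\infty)}(E_{\tY}) = \ord_{E_{\tY}}(\tilde{\mD}_m)$ itself: one must argue that \emph{no} $m$-basis type divisor can have larger $\ord_{E_{\tY}}$-order than the one we built. This requires knowing that an optimal basis can be chosen $\bC^*$-semi-invariant (so that it splits along the $\tilde{R}_{m,j}$), and then that within each weight space $\tilde{R}_{m,j} \cong H^0(V,(mr-m+ma+j)L)$ the order of vanishing along $E_{\tY}$ reduces exactly to the order along $E$ of the corresponding section downstairs — so that the within-piece optimization is literally the optimization defining $S_{mr-m+ma+j,L}(E)$. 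Once the $\bC^*$-semi-invariance of an optimal basis is in hand (this is where \cite{BJ20,Golota19} do the work), the remaining argument is bookkeeping: compare $\ord_{E_{\tY}}$-orders piece by piece and use that $\tilde{\mD}_{m,j}$ was chosen to maximize. I would also double-check that the fixed boundary contributions $jV_0$ and $(2m-am-bm-j)V_\infty$ contribute $0$ to $\ord_{E_{\tY}}$, which holds because $E_{\tY} \neq V_0, V_\infty$ as $E$ is a divisor over $V$ and $E_{\tY}$ dominates $V$.
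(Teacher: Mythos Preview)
Your proposal is correct and follows essentially the same approach as the paper. The paper's proof is just a compressed version of what you wrote: it observes that $E_{\tY}$ induces a $\bC^*$-invariant divisorial valuation, so the filtration it defines on $\tilde{R}_m$ respects the weight grading $\bigoplus_j\tilde{R}_{m,j}$ and on each piece coincides with the $\ord_E$-filtration on $H^0(V,(mr-m+ma+j)L)$; this is exactly the compatibility argument you spelled out in your steps two and three, with the $\bC^*$-semi-invariant optimal basis and the reduction of $\ord_{E_{\tY}}$ to $\ord_E$ within each weight space left implicit.
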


\begin{proof}
Recall in the proof of Lemma \ref{upperbundle}, we obtained
$$\tilde{R}_m=H^0(\tY,-m(K_{\tY}+aV_0+bV_\infty))=\bigoplus_{j=0}^{m(2-a-b)}\tilde{R}_{m,j},$$
and
$$\tilde{R}_{m,j}\cong H^0(V,(mr-m+ma+j)L). $$
As $E_{\tY}$ induces a $\bC^*$-invariant divisorial valuation over $\tY$, then the filtration induced by $E_{\tY}$ on $\tilde{R}_{m}$ is compatible with the filtration induced by $E$ on $H^0(V,(mr-m+ma+j)L)$, concluded.
\end{proof}

\begin{proof}[Proof of Theorem \ref{deltabundle}]
By Lemma \ref{Tcompatible}, we let $m$ tend to infinity, then both (\ref{Smaxbasisdiv}) and (\ref{deltamaxbasisdiv}) are equalities for any prime divisor $E$ over $V$, that is 
\begin{align}\label{key}
\delta_{(\tilde{Y},aV_0+bV_\infty)}(\ord_{E_{\tilde{Y}}}) = \frac{1}{\frac{n+1}{n+2}\frac{B^{n+2}-A^{n+2}}{B^{n+1}-A^{n+1}}}\delta_L(\ord_E) \geq \frac{r\delta(V)}{\frac{n+1}{n+2}\frac{B^{n+2}-A^{n+2}}{B^{n+1}-A^{n+1}}}.
\end{align}
Let $T:=\bC^*$. As $\delta(\tilde{Y},aV_0+bV_\infty)$ can be approximated by $T$-equivariant divisorial valuations over $\tY$, see \cite[Section 4]{Golota19} or \cite[Section 7]{BJ20}, we need to deal with the $T$-invariant valuations over $\tY$ whose centers lie in $V_0$ or $V_\infty$. Assume $\tF\neq V_0$ is a $T$-invariant prime divisor over $\tY$ such that $c_{\tY}(\tF)\subset V_0$, which means the center of $\ord_{\tF}$ lies in $V_0$, then there is a positive rational number $c$ and a prime divisor $F$ over $V$ such that $r(\ord_{\tF})=c\cdot \ord_F$, where $r(\ord_{\tF})$ is the restriction of $\ord_{\tF}$ to $K(V)$. Let $F_{\tY}$ be the induced prime divisor over $\tY$ extended by $F$, then by \cite{BHJ17} $\ord_{\tF}$ is a quasimonomial valuation along $V_0$ and $F_{\tY}$ with weights $(\ord_{\tF}(V_0),c)$, that is $\ord_{\tF}=\ord_{\tF}(V_0)\cdot \ord_{V_0}+c\cdot\ord_{F_{\tY}}$, so by (\ref{bundleexpansion}) and \cite{JM12} we have
\begin{align*}
A_{(\tY,aV_0+bV_\infty)}(\tF)=&\ord_{\tF}(V_0)\cdot A_{(\tY,aV_0+bV_\infty)}(V_0)+c\cdot A_{(\tY,aV_0+bV_\infty)}(F_{\tY})\\
=&\ord_{\tF}(V_0)(1-a)+c\cdot A_{(\tY,aV_0+bV_\infty)}(F_{\tY}),
\end{align*}
and
\begin{align*}
S_{(\tY,aV_0+bV_\infty)}(\tF)=&\ord_{\tF}(V_0)\lim_{m\to \infty}\frac{\sum_{j=0}^{m(B-A)}j\tilde{r}_{m,j}}{\sum_{j=0}^{m(B-A)}m\tilde{r}_{m,j}}+c\cdot S_{(\tY,aV_0+bV_\infty)}(F_{\tY})\\
=& \ord_{\tF}(V_0)\bigg\{\frac{n+1}{n+2}\frac{B^{n+2}-A^{n+2}}{B^{n+1}-A^{n+1}}-A \bigg\}+ c\cdot S_{(\tY,aV_0+bV_\infty)}(F_{\tY}),
\end{align*}
so by Lemma \ref{delta0bundle} the following holds,

\begin{align*}
\delta_{(\tY,aV_0+bV_\infty)}(\ord_{\tF})\geq  \min\{\delta_{(\tilde{Y},aV_0+bV_\infty)}(\ord_{V_0}),\delta_{(\tY,aV_0+bV_\infty)}(\ord_{F_{\tY}})\}.   
\end{align*}

For $T$-invariant divisors whose centers lie in $V_\infty$ but not equal to $V_\infty$, the analysis is similar. One can write
$$\ord_{\tF}=\ord_{\tF}(V_\infty)\cdot \ord_{V_\infty}+c\cdot\ord_{F_{\tY}} $$ 
for some positive rational $c$, so by (\ref{bundleexpansion}) and \cite{JM12} we have
\begin{align*}
A_{(\tY,aV_0+bV_\infty)}(\tF)=&\ord_{\tF}(V_\infty)\cdot A_{(\tY,aV_0+bV_\infty)}(V_\infty)+c\cdot A_{(\tY,aV_0+bV_\infty)}(F_{\tY})\\
=&\ord_{\tF}(V_\infty)(1-b)+c\cdot A_{(\tY,aV_0+bV_\infty)}(F_{\tY}),
\end{align*}
and
\begin{align*}
S_{(\tY,aV_0+bV_\infty)}(\tF)=&\ord_{\tF}(V_\infty)\lim_{m\to \infty}\frac{\sum_{j=0}^{m(B-A)}(m(B-A)-j)\tilde{r}_{m,j}}{\sum_{j=0}^{m(B-A)}m\tilde{r}_{m,j}}+c\cdot S_{(\tY,aV_0+bV_\infty)}(F_{\tY})\\
=& \ord_{\tF}(V_\infty)\bigg\{B-\frac{n+1}{n+2}\frac{B^{n+2}-A^{n+2}}{B^{n+1}-A^{n+1}} \bigg\}+ c\cdot S_{(\tY,aV_0+bV_\infty)}(F_{\tY}),
\end{align*}
so by Lemma \ref{delta0bundle} the following holds,
\begin{align*}
\delta_{(\tY,aV_0+bV_\infty)}(\ord_{\tF})\geq  \min\{\delta_{(\tilde{Y},aV_0+bV_\infty)}(\ord_{V_\infty}),\delta_{(\tY,aV_0+bV_\infty)}(\ord_{F_{\tY}})\}.   
\end{align*}
Combine inequality (\ref{key}), we have the following
$$\delta(\tilde{Y},aV_0+bV_\infty)\geq \min\bigg\{\delta_{(\tilde{Y},aV_0+bV_\infty)}(\ord_{V_0}),\delta_{(\tilde{Y},aV_0+bV_\infty)}(\ord_{V_\infty}), \frac{r\delta(V)}{\frac{n+1}{n+2}\frac{B^{n+2}-A^{n+2}}{B^{n+1}-A^{n+1}}}\bigg\},$$
which finishes the proof.
\end{proof}

\section{Delta invariants of projective cones of Fano type}\label{section:cone}

In this section we prove Theorem \ref{deltacone}. Similarly as bundle case in previous section, we start by the computation of $\delta_{Y,cV_\infty}(\ord_{V_0})$ and $\delta_{Y,cV_\infty}(\ord_{V_\infty})$.

\begin{lemma}\label{specialcomp}
Notation as in Theorem \ref{deltacone}, we have
$$\delta_{Y,cV_\infty}(\ord_{V_0})= \frac{(n+2)r}{(n+1)(r+1-c)}, \delta_{Y,cV_\infty}(\ord_{V_\infty})=\frac{(n+2)(1-c)}{r+1-c}$$
\end{lemma}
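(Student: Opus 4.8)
~\textbf{Plan of proof.}
The plan is to run the same two-step strategy that worked for the $\bP^1$-bundle $\tY$, but now on the projective cone $Y$ with boundary $cV_\infty$. First I would record the log-discrepancy inputs: since $(Y,cV_\infty)$ is log Fano with $V_\infty$ a component of the boundary of coefficient $c$, one has $A_{Y,cV_\infty}(V_\infty)=1-c$; and by Lemma~\ref{conelemma}(7) the vertex valuation $\ord_{V_0}$ (which lives over $Y$ via the blowup $\phi:\tY\to Y$ contracting $V_0$) satisfies $A_{Y,cV_\infty}(V_0)=r=A_{X,\Delta_X}(V_0)$, using that $L\sim_\bQ-\tfrac1r K_V$. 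So the whole content is the computation of the two $S$-invariants $S_{Y,cV_\infty}(V_0)$ and $S_{Y,cV_\infty}(V_\infty)$.

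Next I would compute these $S$-invariants by the volume integral from the third item of the $\delta$-invariant lemma. The cleanest route is to pull everything back to $\tY$: one has $-(K_Y+cV_\infty)$ pulled back to $\tY$ equal to $\phi^*(-(K_Y+cV_\infty))$, and $\phi^*V_\infty=V_\infty$ while $\phi^*V_0=V_0$ plus the discrepancy term; concretely $-(K_\tY+cV_\infty)\sim_\bQ (r+1-c)p^*L + (\text{multiple of }V_0)$ by Lemma~\ref{lemma 2.1}, and subtracting $tV_0$ (resp. $tV_\infty$) one gets $\vol_{\tY}\big((r+1-c)p^*L - t\,V_0\big)$ type integrands. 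Expanding $\big((V_\infty-V_0)^{?}\big)$ via $V_\infty-V_0\sim p^*L$, $V_0|_{V_0}\sim L^{-1}$, $V_\infty|_{V_\infty}\sim L$ exactly as in Lemma~\ref{delta0bundle}, the integral over the Okounkov-body-type range collapses to an elementary polynomial integral. Since the cone corresponds to the degenerate limit $A\to 0$ (i.e. $a\to 1-r$, $B=r+1-c$) of the $\tY$ computation — or more directly, $\vol((r+1-c)p^*L+sV_0)$ is just $(r+1-c-s)^{n+1}L^n$ for $s$ in the right range — I expect
$$S_{Y,cV_\infty}(V_0)=\frac{n+1}{n+2}(r+1-c),\qquad S_{Y,cV_\infty}(V_\infty)=(r+1-c)-\frac{n+1}{n+2}(r+1-c)=\frac{r+1-c}{n+2},$$
which on dividing into the log discrepancies yields exactly $\delta_{Y,cV_\infty}(\ord_{V_0})=\frac{(n+2)r}{(n+1)(r+1-c)}$ and $\delta_{Y,cV_\infty}(\ord_{V_\infty})=\frac{(n+2)(1-c)}{r+1-c}$.

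I would verify the $V_0$ case either directly on $X$ using the valuation $\ord_{V_0}=\ord_o$ (the canonical valuation of the vertex, with $A_X(\ord_{V_0})=r$ and $S$ computed from $\vol(L-tE)$ on $V$, so $S_{Y}(\ord_{V_0})=r\cdot S_{V,L}(\text{trivial})$-type average giving $\frac{n+1}{n+2}(r+1-c)$ after accounting for $c$), or simply by passing to $\tY$ and invoking the already-proved formula $S_{\tY,aV_0+bV_\infty}(V_0)=\frac{n+1}{n+2}\frac{B^{n+2}-A^{n+2}}{B^{n+1}-A^{n+1}}-A$ and letting $A\to 0$ — but care is needed because $\phi$ contracts $V_0$, so the honest check is that $S_{Y,cV_\infty}(V_0)=S_{\tY,cV_\infty}(V_0)-A_{\tY}(V_0)+A_Y(V_0)$-type relation does not apply naively; rather $S$ is computed from the same linear system downstairs. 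The main obstacle I anticipate is precisely this bookkeeping: making sure the volume function $t\mapsto\vol_Y(-(K_Y+cV_\infty)-tV_0)$ is correctly identified (the vertex valuation is not divisorial on $Y$ itself, it is divisorial over $Y$ via $\tY$), and that the range of $t$ and the restricted volumes are handled without the spurious $-A$ shift that appears in the bundle case. Once that identification is pinned down, the remaining integral is a one-line computation of $\int_0^{r+1-c}(r+1-c-t)^{n+1}\,\dt$.
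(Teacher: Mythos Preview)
Your approach is essentially the paper's own: pull back $-(K_Y+cV_\infty)$ to $\tY$ via $\phi$, recognize that $\phi^*(K_Y+cV_\infty)\sim_\bQ K_{\tY}+(1-r)V_0+cV_\infty$, and then simply substitute $a=1-r$, $b=c$ (equivalently $A=0$, $B=r+1-c$) into the formulas of Lemma~\ref{delta0bundle}; this immediately gives $S(V_0)=\tfrac{n+1}{n+2}(r+1-c)$ and $S(V_\infty)=\tfrac{r+1-c}{n+2}$, and together with $A_{Y,cV_\infty}(V_0)=r$, $A_{Y,cV_\infty}(V_\infty)=1-c$ you are done. One small caution: your closing ``one-line'' integrand $(r+1-c-t)^{n+1}$ is not the right volume function for $V_0$ --- on $\tY$ one has $\vol\big((r+1-c)V_\infty-tV_0\big)=\big((r+1-c)^{n+1}-t^{n+1}\big)L^n$, and it is this that integrates to $\tfrac{n+1}{n+2}(r+1-c)$; the expression you wrote would instead produce $S(V_\infty)$.
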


\begin{proof}
By Lemma \ref{conelemma} we know that 
$$K_{\tY}+V_0+cV_\infty=p^*(K_Y+cV_\infty)+rV_0, $$
hence $A_{Y,cV_\infty}(V_0)=r$ and $p^*(K_Y+cV_\infty)\sim_\bQ K_{\tY}+(1-r)V_0+cV_\infty$. Go back to the proof of Lemma \ref{delta0bundle}, we just let $a=1-r$ and $b=c$, although the values of $a,b$ may not satisfy our requirement on them anymore, it doesn't effect the computation of volumes. A direct computation concludes the lemma.
\end{proof}

By the same method used for the bundle case, we next work out another upper bound for $\delta(Y,cV_\infty)$, that is $\delta(Y,cV_\infty)\leq\frac{(n+2)r}{(n+1)(r+1-c)}\delta(V)$.

\begin{lemma}
Notation as in Theorem \ref{deltacone}, we have $\delta(Y,cV_\infty)\leq\frac{(n+2)r}{(n+1)(r+1-c)}\delta(V)$
\end{lemma}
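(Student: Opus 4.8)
The strategy mirrors Lemma~\ref{upperbundle} from the bundle case, adapted to the projective cone $Y$. The point is to construct, for each sufficiently divisible $m$, a special $m$-basis type divisor of $-(K_Y+cV_\infty)$ whose order along the extended divisor $E_Y$ of a given prime divisor $E$ over $V$ can be computed explicitly, and then let $m\to\infty$.

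First I would fix notation: choose $m$ sufficiently divisible so that $mr$ and $mc$ are integers and $-m(K_Y+cV_\infty)$ is a very ample line bundle. Using Lemma~\ref{conelemma}, I would decompose
$$R_m:=H^0\bigl(Y,-m(K_Y+cV_\infty)\bigr)=\bigoplus_{j\geq 0} R_{m,j},$$
where $R_{m,j}$ is the graded piece on which the natural $\bC^*$-action (vanishing order along $V_0$) acts with weight $j$; concretely $R_{m,j}\cong H^0(V,(mr-mc+j-\text{something})L)$ — I would track the exact linear shift via $K_{\tY}+V_0+cV_\infty=p^*(K_Y+cV_\infty)+rV_0$ and the pushforward to $Y$, noting that the grading now runs over $0\le j\le m(r+1-c)$ after the relevant normalization. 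On each graded piece $R_{m,j}$ I would pick a basis compatible with the filtration induced by $\ord_E$ on $H^0(V,\bullet L)$, so that the resulting sub-divisor computes $S_{\bullet,L}(E)$. Assembling these into a single $m$-basis type divisor $\mathcal D_m$ of $-(K_Y+cV_\infty)$, I get
$$S_{m,(Y,cV_\infty)}(E_Y)\ \geq\ \ord_{E_Y}(\mathcal D_m)=\frac{\sum_j (\text{shift}_j)\,r_{m,j}\,S_{\text{shift}_j,L}(E)}{m\sum_j r_{m,j}},$$
since $A_V(E)=A_{(Y,cV_\infty)}(E_Y)$ (the cone construction does not change log discrepancies of divisors coming from $V$).

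Next I would compute the limit as $m\to\infty$. Converting the sums to Riemann integrals exactly as in Lemma~\ref{lem:computation}, the weighted average $\sum_j(\text{shift}_j)r_{m,j}/(m\sum_j r_{m,j})$ converges to an integral of the form $\frac{1}{\vol}\int \vol((A+t)L)\,(\text{linear in }t)\,dt$ over the appropriate range; plugging in $\vol((A+t)L)=(A+t)^n L^n$ and integrating, the ratio collapses (because $Y$ is a cone, so the base $V$ sits at weight $0$ and there is only a \emph{one-sided} contribution, unlike the two sections $V_0,V_\infty$ in the bundle) to the clean constant $\frac{n+1}{n+2}(r+1-c)$. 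Hence $S_{(Y,cV_\infty)}(E_Y)\geq \frac{n+1}{n+2}(r+1-c)\,S_L(E)$, which gives
$$\delta_{(Y,cV_\infty)}(\ord_{E_Y})\ \leq\ \frac{n+2}{(n+1)(r+1-c)}\,\delta_L(\ord_E)=\frac{(n+2)r}{(n+1)(r+1-c)}\,\delta_V(\ord_E),$$
using $\delta_L(\ord_E)=r\,\delta_V(\ord_E)$ from $L\sim_\bQ -\frac1r K_V$. Taking the infimum over all prime divisors $E$ over $V$ yields $\delta(Y,cV_\infty)\leq \frac{(n+2)r}{(n+1)(r+1-c)}\delta(V)$.

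The main obstacle I anticipate is bookkeeping the graded decomposition of $R_m$ on the cone side correctly: unlike $\tY$, where $\tY\to V$ is a genuine $\bP^1$-bundle and the section-weight decomposition is transparent from \eqref{bundleexpansion}, here $Y$ is singular at the vertex $o$ and one must pass through the resolution $\phi:\tY\to Y$ (contracting $V_0$) to make sense of $R_{m,j}$ and of $\ord_{E_Y}$, then check that the resulting $\mathcal D_m$ is a legitimate $m$-basis type divisor of $-(K_Y+cV_\infty)$ and not merely of its pullback. The cleanest route is probably to do everything upstairs on $\tY$ with the pair $(\tY,(1-r)V_0+cV_\infty)$ — which is exactly the trick used in Lemma~\ref{specialcomp} — since $H^0(\tY,-m(K_{\tY}+(1-r)V_0+cV_\infty))=H^0(Y,-m(K_Y+cV_\infty))$ and $\delta$ is insensitive to the crepant contraction $\phi$; then the computation reduces verbatim to the one already carried out in Lemma~\ref{upperbundle} with $a=1-r$, $b=c$, whence $A=r-(1-a)=0$ and $B=r+(1-b)=r+1-c$, and the bundle formula $\frac{n+1}{n+2}\frac{B^{n+2}-A^{n+2}}{B^{n+1}-A^{n+1}}$ degenerates to $\frac{n+1}{n+2}(r+1-c)$ as needed.
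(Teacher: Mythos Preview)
Your proposal is correct and follows essentially the same approach as the paper: decompose $R_m=H^0(Y,-m(K_Y+cV_\infty))$ by $\ord_{V_0}$-weight, build an $m$-basis type divisor from graded pieces compatible with $\ord_E$, and pass to the limit via the Riemann-sum computation of Lemma~\ref{lem:computation}. The paper resolves your bookkeeping uncertainty by pulling back via $\phi$ (equation~\eqref{coneexpansion}) to find the shift is simply $j$, i.e.\ $R_{m,j}\cong H^0(V,jL)$, which confirms your $A=0,\ B=r+1-c$ substitution.
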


\begin{proof}
We choose a sufficiently divisible natural number $m$ such that $-m(K_Y+cV_\infty)$ is an ample line bundle, we have
\begin{align}\label{coneexpansion}
\phi^*(-m(K_Y+cV_\infty))\sim m(r+1-c)V_\infty\sim jp^*L+(m(r+1-c)-j)V_\infty+jV_0,
\end{align}
Let $R_m$ denote $H^0(Y,-m(K_Y+cV_\infty))$, we define
$$R_{m,j}:=\{s\in R_m|\text{$\ord_{V_0}(s)=j$}\}.$$
One directly sees that
$$R_m=\bigoplus^{m(r+1-c)}_{j=0} R_{m,j}, $$
and by (\ref{coneexpansion}),
\begin{align}
R_{m,j}\cong H^0(V,jL).
\end{align}
Write $r_{m,j}=\dim R_{m,j}=\dim H^0(V,jL)$, we choose a $j$-basis type divisor $D_{m,j}$ of $L$ such that $D_{m,j}$ computes $S_{j,L}(E)$. Then we lift $D_{m,j}$ to be a divisor $\mD_{m,j}$ over $Y$. By (\ref{coneexpansion}), it is not hard to see that 
\begin{align}
\mD_m:=\frac{\sum_{j=0}^{m(r+1-c)}r_{m,j}(j\mD_{m,j}+jV_0+(m(r+1-c)-j)V_\infty)}{m\sum_{j=0}^{m(r+1-c)} r_{m,j}} 
\end{align}
 is an $m$-basis type divisor of $\phi^*(-(K_Y+cV_\infty))$. Let $E$ be a prime divisor over $V$ and $E_Y$  the divisor over $Y$ which is the natural extension of $E$, then we have 
 \begin{align}
 S_{m,(Y,cV_\infty)}(E_Y)\geq \ord_{E_Y}(\mD_m)=\frac{\sum_{j=0}^{m(r+1-c)}jr_{m,j}S_{j,L}(E)}{m\sum_{j=0}^{m(r+1-c)}r_{m,j}}.
 \end{align}
Let $m$ tends to infinity, by the same computation as in Lemma \ref{lem:computation}, one gets
\begin{align}
S_{Y,cV_\infty}(E_Y)\geq  \frac{(n+1)(r+1-c)}{n+2}\cdot S_L(E),
\end{align}
hence we have 
\begin{align}
 \delta_{Y,cV_\infty}(\ord_{E_Y})\leq \frac{n+2}{(n+1)(r+1-c)}\delta_L(\ord_E).
 \end{align}
 As $E$ is arbitrarily chosen over $V$, we know
 $$\delta(Y,cV_\infty)\leq \frac{n+2}{(n+1)(r+1-c)}\delta_V(L)=\frac{(n+2)r}{(n+1)(r+1-c)}\delta(V).$$
 \end{proof}
 
 Above all, one obtains the upper bound in Theorem \ref{deltacone}.

\begin{theorem}\label{uppercone}
Notation as in Theorem \ref{deltacone}, we have
\begin{align}
\delta(Y,cV_\infty)\leq \min\bigg\{\frac{(n+2)r}{(n+1)(r+1-c)}\delta(V), \frac{(n+2)r}{(n+1)(r+1-c)}, \frac{(n+2)(1-c)}{r+1-c}\bigg\}.
\end{align}
\end{theorem}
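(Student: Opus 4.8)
The plan is to read off all three entries of the minimum from divisorial valuations that have already been analyzed. By definition $\delta(Y,cV_\infty)=\inf_E\delta_{Y,cV_\infty}(\ord_E)$, where $E$ ranges over all prime divisors over $Y$; in particular $\delta(Y,cV_\infty)\le\delta_{Y,cV_\infty}(\ord_E)$ for every single test divisor $E$. Applying this to $E=V_0$ and to $E=V_\infty$ and feeding in the values computed in Lemma \ref{specialcomp} gives
$$\delta(Y,cV_\infty)\le\frac{(n+2)r}{(n+1)(r+1-c)}\quad\text{and}\quad\delta(Y,cV_\infty)\le\frac{(n+2)(1-c)}{r+1-c},$$
which are the second and third terms. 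The first term $\frac{(n+2)r}{(n+1)(r+1-c)}\delta(V)$ is exactly the content of the lemma immediately preceding the theorem. Taking the minimum of the three bounds finishes the proof.

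For completeness I would recall briefly why those inputs hold. Lemma \ref{specialcomp} is obtained by pulling $K_Y+cV_\infty$ back to $\tY$ via $\phi$, using $K_{\tY}+V_0+cV_\infty=\phi^*(K_Y+cV_\infty)+rV_0$ from Lemma \ref{conelemma}, and then running the volume computation of Lemma \ref{delta0bundle} with the formal substitution $a=1-r$, $b=c$ (equivalently $A=0$, $B=r+1-c$); although such an $a$ is outside the admissible range, only the volume polynomials enter, so the computation is unaffected. The preceding lemma is proved by the same basis-lifting mechanism as Lemma \ref{upperbundle}: decompose $R_m=H^0(Y,-m(K_Y+cV_\infty))$ according to $\ord_{V_0}$, identify $R_{m,j}\cong H^0(V,jL)$, lift to each graded piece a $j$-basis type divisor of $L$ computing $S_{j,L}(E)$, assemble an $m$-basis type divisor $\mD_m$ of $\phi^*(-(K_Y+cV_\infty))$, and take $m\to\infty$ using the integral asymptotics of Lemma \ref{lem:computation}.

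The theorem itself carries essentially no difficulty — it is a one-line combination of results already in hand. If there is a subtle point, it is purely bookkeeping: making sure the three inequalities genuinely hold simultaneously (they do, since each is a separate instance of $\inf\le$ value), and checking that the out-of-range substitution in Lemma \ref{specialcomp} does not corrupt the volume formulas. The real work — the compatible basis-type divisor construction and the asymptotic volume integrals — lives in the lemmas that precede the statement, so the expected main obstacle has already been dealt with by the time one reaches this theorem.
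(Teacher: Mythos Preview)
Your proposal is correct and matches the paper's own approach exactly: the theorem is stated immediately after Lemma \ref{specialcomp} and the unnamed lemma bounding $\delta(Y,cV_\infty)$ by $\frac{(n+2)r}{(n+1)(r+1-c)}\delta(V)$, and is simply the combination of those two results via $\delta\le\delta(\ord_E)$ for each test divisor. Your summary of how those lemmas are proved (the formal substitution $a=1-r$, $b=c$ in the volume computation, and the basis-lifting argument parallel to Lemma \ref{upperbundle}) is also accurate.
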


\begin{proof}[Proof of theorem \ref{deltacone}]
For the converse direction, one can use totally the same way as in the final proof of Theorem \ref{deltabundle} for bundle case. As there is no need to repeat it, we just leave it out.
\end{proof}

We will take another more elegant way below for the lower bound, although we can only work out the case where $0<r\leq n+1$. However, this upper bound of $r$ is usually satisfied, at least for K-semistable log Fano pairs of dimension $n$, see \cite{Liu18,Fuj18}. First we recall the following lemma \cite[Proposition 2.11]{LZ19} which establishes the relationship between K-stability of the base and that of projective cone over it, see also \cite{LL19}.

\begin{lemma}\label{conebridgelemma}
Let $(V,\Delta)$ be an n-dimensional log Fano variety, and L an ample line bundle on V such that $L\sim_\bQ -\frac{1}{r}(K_V+\Delta)$ for some $0<r\leq n+1$. As before, Y is the projective cone over V associated to L, then $(V,\Delta)$ is K-semistable if and only if $(Y,\Delta_Y+(1-\frac{r}{n+1})V_\infty)$ is K-semistable, where $\Delta_Y$ is the divisor on Y naturally extended by $\Delta$.
\end{lemma}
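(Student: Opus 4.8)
The plan is to prove Lemma \ref{conebridgelemma} by testing K-semistability of $(Y,\Delta_Y+(1-\tfrac{r}{n+1})V_\infty)$ against all $\bC^*$-invariant divisorial valuations over $Y$, since by \cite{Golota19,BJ20} the $\delta$-invariant (equivalently, K-semistability) can be detected on $T$-equivariant valuations, where $T=\bC^*$ acts in the fiber/cone direction. Concretely I would first record the key numerical identity: by Lemma \ref{conelemma}(7) and Lemma \ref{specialcomp}, with $c=1-\tfrac{r}{n+1}$ one has $A_{(Y,\Delta_Y+cV_\infty)}(V_0)=r$ and $S_{(Y,\Delta_Y+cV_\infty)}(V_0)=\tfrac{(n+1)(r+1-c)}{n+2}=r$, so $V_0$ itself has $\delta$-ratio exactly $1$; the same computation gives $\delta_{(Y,\Delta_Y+cV_\infty)}(\ord_{V_\infty})=\tfrac{(n+2)(1-c)}{r+1-c}=\tfrac{n+1}{n}\cdot\tfrac{r/(n+1)}{r/(n+1)+\ \cdots}$, which one checks is $\geq 1$ precisely when $r\le n+1$ (this is where the hypothesis $0<r\le n+1$ enters).

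Next I would reduce an arbitrary $T$-invariant prime divisor $\mathbf{F}$ over $Y$ to data on $V$. If $c_Y(\mathbf F)\not\subset V_\infty$, then $c_Y(\mathbf F)$ lies over the affine cone $X$, and since $\tX$ is an $\bA^1$-bundle over $V$ with $\tX\setminus V_0\cong X\setminus o$, the valuation $\ord_{\mathbf F}$ is determined by its restriction to $K(V)$ together with its value on $V_0$; as in the proof of Theorem \ref{deltabundle} one writes $\ord_{\mathbf F}=\ord_{\mathbf F}(V_0)\cdot\ord_{V_0}+c'\cdot\ord_{F_Y}$ for a prime divisor $F$ over $V$ and some $c'\ge 0$, and both $A$ and $S$ split linearly in these two pieces (using \cite{JM12} for log discrepancy and the fibration structure of the linear series, exactly the $a=1-r$, $b=c$ specialization used in Lemma \ref{specialcomp}, for the $S$-invariant). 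The upshot is an identity of the shape $\delta_{(Y,\Delta_Y+cV_\infty)}(\ord_{\mathbf F})\ge\min\{\delta_{(Y,\Delta_Y+cV_\infty)}(\ord_{V_0}),\ \delta_{(Y,\Delta_Y+cV_\infty)}(\ord_{F_Y})\}$, and one computes $\delta_{(Y,\Delta_Y+cV_\infty)}(\ord_{F_Y})$ in terms of $\delta_{(V,\Delta)}(\ord_F)$ via the same volume bookkeeping as in Theorem \ref{deltacone}, getting a positive multiple of $\delta_{(V,\Delta)}(\ord_F)$ whose constant, when $c=1-\tfrac r{n+1}$, is exactly $1$. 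The case $c_Y(\mathbf F)\subset V_\infty$ is handled symmetrically, comparing against $\delta_{(Y,\Delta_Y+cV_\infty)}(\ord_{V_\infty})\ge 1$ and against divisors pulled back from $V$.

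Putting these together: $\delta(Y,\Delta_Y+cV_\infty)=\min\{\,1,\ 1,\ \delta(V,\Delta)\,\}=\min\{1,\delta(V,\Delta)\}$ by the already-established Theorem \ref{deltacone} (with $\delta(V)$ replaced by $\delta(V,\Delta)$), so $(Y,\Delta_Y+cV_\infty)$ is K-semistable $\iff\delta(Y,\Delta_Y+cV_\infty)\ge 1\iff\delta(V,\Delta)\ge 1\iff(V,\Delta)$ is K-semistable. Alternatively, and more cleanly, once Theorem \ref{deltacone} is in hand (in the log form), the lemma is immediate by plugging $c=1-\tfrac r{n+1}$ into the formula $\delta(Y,cV_\infty)=\min\{\tfrac{(n+2)r}{(n+1)(r+1-c)}\delta(V,\Delta),\ \tfrac{(n+2)r}{(n+1)(r+1-c)},\ \tfrac{(n+2)(1-c)}{r+1-c}\}$ and checking that all three coefficients equal $\min\{\delta(V,\Delta),1\}$-compatible constants; I would present this short derivation as the main proof and remark that it also follows from \cite[Proposition 2.11]{LZ19} and \cite{LL19}.

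The main obstacle I anticipate is the bookkeeping in the $c_Y(\mathbf F)\not\subset V_\infty$ case: one must justify that the valuation genuinely decomposes as a quasimonomial combination of $\ord_{V_0}$ and a divisor pulled back from $V$ (citing \cite{BHJ17}) and that both $A$ and $S$ are additive along this decomposition, the latter requiring the filtration of $\bigoplus_j H^0(V,jL)s^j$ induced by $\mathbf F$ to be compatible with the grading by $\ord_{V_0}$ — this is the cone analogue of Lemma \ref{Tcompatible} and relies on $\mathbf F$ being $T$-invariant. Verifying the inequality $\delta_{(Y,\Delta_Y+cV_\infty)}(\ord_{V_\infty})\ge 1$ under $r\le n+1$ is a short but essential elementary estimate; if $r>n+1$ this can fail, which is exactly why the hypothesis is needed and why the argument does not extend beyond $0<r\le n+1$.
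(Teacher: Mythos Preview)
Your approach is correct but differs from the paper's treatment in an important structural way: the paper does not prove Lemma~\ref{conebridgelemma} at all --- it simply recalls it from \cite[Proposition~2.11]{LZ19} (and \cite{LL19}) as an independent external input, and then \emph{uses} it to give the alternative proof of the lower bound in Theorem~\ref{deltacone} (i.e., Theorem~\ref{lowercone}). You instead reverse the logical flow, deriving the lemma from (the log-pair version of) Theorem~\ref{deltacone} by plugging in $c=1-\tfrac{r}{n+1}$ and observing that all three terms in the minimum become $\min\{\delta(V,\Delta),1,1\}$. This is perfectly valid and non-circular, since the paper's primary proof of Theorem~\ref{deltacone} (the one that mirrors the bundle case and is ``left out'') does not invoke Lemma~\ref{conebridgelemma}. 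What you gain is a self-contained derivation inside the paper's framework; what you lose is the independence of the lemma as an input to the second, ``more elegant'' proof of Theorem~\ref{lowercone} --- if you prove the lemma this way, that alternative proof becomes circular and should be dropped.

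Two small corrections. First, with $c=1-\tfrac{r}{n+1}$ one has $r+1-c=\tfrac{r(n+2)}{n+1}$, so both $\tfrac{(n+2)r}{(n+1)(r+1-c)}$ and $\tfrac{(n+2)(1-c)}{r+1-c}$ equal exactly $1$, not merely $\geq 1$; the hypothesis $0<r\leq n+1$ is needed not for the $V_\infty$-ratio but simply to ensure $c\in[0,1)$ so that $(Y,\Delta_Y+cV_\infty)$ is a bona fide log Fano pair. Second, Theorem~\ref{deltacone} as stated is for $V$ a $\bQ$-Fano variety without boundary, whereas the lemma concerns a log pair $(V,\Delta)$; your longer outline (reproducing the $T$-invariant valuation argument in the log setting) handles this, but if you present only the ``clean'' plug-in proof you should say explicitly that the proof of Theorem~\ref{deltacone} goes through verbatim with $(V,\Delta)$ in place of $V$.
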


By above lemma, we can obtain the converse direction for projective cones.

\begin{theorem}\label{lowercone}
Notation as before, assume $0<r\leq n+1$, then
\begin{align}
\delta(Y)\geq \min\bigg\{\frac{(n+2)r}{(n+1)(r+1)}\delta(V), \frac{(n+2)r}{(n+1)(r+1)}\bigg\}.
\end{align}
\end{theorem}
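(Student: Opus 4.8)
\textbf{Proof proposal for Theorem \ref{lowercone}.} The plan is to leverage the interpolation/degeneration bridge of Lemma \ref{conebridgelemma} together with the known equivalence between delta invariants and normalized volumes of $\bT$-equivariant valuations, rather than re-running the filtration computation from the proof of Theorem \ref{deltabundle}. First I would rescale: it suffices to show that for every $\epsilon>0$ small, if we set $\delta_0:=\min\{\delta(V),1\}$ then the pair $\left(Y, (1-\tfrac{r}{n+1})V_\infty\right)$ scaled appropriately is ``$(\delta_0-\epsilon)$-K-semistable'' in the sense that all $\bT$-invariant divisorial valuations $\tF$ over $Y$ have $A_Y(\tF)\geq (c-\epsilon)\,S_Y(\tF)$ with $c=\tfrac{(n+2)r}{(n+1)(r+1)}\delta_0$. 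The idea is that the pair $(V,(1-\tfrac{1}{\delta(V)})\cdot 0)$ — more precisely, a suitable twist making $V$ ``K-semistable with coefficient $\delta(V)$'' — pushes forward under the cone construction, and Lemma \ref{conebridgelemma} (applied in its delta-invariant strengthening, which follows from the valuative criterion since the proof of that lemma is valuation-by-valuation) transfers the bound.

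The key steps, in order, would be: (1) Reduce to $\bT=\bC^*$-invariant divisorial valuations $\tF$ over $Y$ using \cite{Golota19} or \cite[Section 7]{BJ20}, exactly as in the proof of Theorem \ref{deltabundle}. (2) Split into the two cases $c_Y(\tF)\subseteq o$ (the cone vertex) versus $c_Y(\tF)\not\subseteq o$. (3) In the second case, $\tF$ descends to a valuation on $V$: write $\ord_{\tF}=\ord_{\tF}(V_\infty)\cdot\ord_{V_\infty}+c\cdot\ord_{F_Y}$ (quasi-monomial along $V_\infty$ and the extension $F_Y$ of a prime divisor $F$ over $V$, as in \cite{BHJ17}), compute $A$ and $S$ additively via \cite{JM12} using the expansion \eqref{coneexpansion}, and conclude $\delta_{Y,cV_\infty}(\ord_{\tF})\geq\min\{\delta_{Y,cV_\infty}(\ord_{V_\infty}),\delta_{Y,cV_\infty}(\ord_{F_Y})\}$; then bound $\delta_{Y,cV_\infty}(\ord_{V_\infty})$ by Lemma \ref{specialcomp} and $\delta_{Y,cV_\infty}(\ord_{F_Y})$ by the same scaling identity $\delta_{Y,cV_\infty}(\ord_{F_Y})=\tfrac{(n+2)r}{(n+1)(r+1-c)}\,\delta_V(\ord_F)/(\text{normalization})$ used earlier. (4) In the first case, where the center is the vertex, this is precisely where Lemma \ref{conebridgelemma} does the work: such valuations are (up to the $V_0$ direction) controlled by the K-semistability threshold of the base, and one reads off $A_Y/S_Y\geq \tfrac{(n+2)r}{(n+1)(r+1)}\min\{\delta(V),1\}$ from the cone correspondence. (5) Take the infimum over all $\tF$ and combine with Lemma \ref{specialcomp} (which handles $V_0$ and $V_\infty$ themselves) to get the stated bound.

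The main obstacle I expect is Step (4): making the passage through Lemma \ref{conebridgelemma} quantitative. The lemma as stated is an ``iff'' for K-semistability ($\delta\geq 1$), not a statement about arbitrary delta-invariant values, so I would need either to twist the base pair $(V,\Delta)$ to $(V,\Delta+(1-\tfrac{1}{\delta(V)})(-(K_V+\Delta)))$-type boundary to artificially make it K-semistable and then apply the lemma, and carefully track how the coefficient $r$ and the interpolation weight $(1-\tfrac{r}{n+1})$ change under this twist — or, alternatively, to re-prove the relevant direction of Lemma \ref{conebridgelemma} valuatively with general coefficients, which essentially reduces back to the computation in Theorem \ref{deltabundle}. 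The constraint $0<r\leq n+1$ enters exactly here, since it is what guarantees $(Y,\Delta_Y+(1-\tfrac{r}{n+1})V_\infty)$ is still a (log) Fano pair to which the bridge applies. A secondary, more routine subtlety is checking that the additivity of $A$ and $S$ along quasi-monomial valuations from \cite{JM12,BHJ17} applies verbatim on the \emph{singular} cone $Y$ (as opposed to the smooth $\tilde Y$), but since $Y$ has klt singularities by Theorem \ref{conesing} and $V_\infty$ is $\bQ$-Cartier, this should go through without trouble.
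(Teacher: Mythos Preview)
Your proposal is workable but substantially more elaborate than the paper's argument, and the two routes are genuinely different. The paper's proof of Theorem \ref{lowercone} uses \emph{only} Lemma \ref{conebridgelemma} together with the twisting trick you flag as the resolution of your ``main obstacle''---there is no valuation-by-valuation analysis at all. Concretely: if $\delta(V)\geq 1$, apply Lemma \ref{conebridgelemma} directly to get $\delta\big(Y,(1-\tfrac{r}{n+1})V_\infty\big)\geq 1$, and rescale the polarization to obtain $\delta(Y)\geq \tfrac{(n+2)r}{(n+1)(r+1)}$. If $\delta(V)<1$, choose rationals $\delta_i\nearrow\delta(V)$ and (via \cite{BL18b,CRZ19}) effective $\Delta_i\sim_\bQ -K_V$ with $(V,(1-\delta_i)\Delta_i)$ K-semistable; then $L\sim_\bQ -\tfrac{1}{r_i}(K_V+(1-\delta_i)\Delta_i)$ with $r_i=\delta_i r\leq n+1$, Lemma \ref{conebridgelemma} yields $\delta\big(Y,(1-\delta_i)\Delta_{i,Y}+(1-\tfrac{r_i}{n+1})V_\infty\big)\geq 1$, and rescaling gives $\delta(Y)\geq \tfrac{(n+2)r_i}{(n+1)(r+1)}\to \tfrac{(n+2)r}{(n+1)(r+1)}\delta(V)$. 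That is the entire proof.

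Your steps (1)--(3) and (5) reproduce the $\bC^*$-invariant valuation method already used for Theorem \ref{deltabundle} and the general case of Theorem \ref{deltacone}; the paper explicitly presents Theorem \ref{lowercone} as an \emph{alternative} to that computation. Your step (4) is the awkward part: Lemma \ref{conebridgelemma} is a global K-semistability statement, not a per-valuation bound, so invoking it only for valuations centered at the vertex is conceptually mismatched---once you make it quantitative via the twist, it bounds $\delta(Y)$ outright and steps (1)--(3) become redundant. In short, what you identify as the main obstacle is in fact the whole proof; the valuation decomposition is unnecessary overhead here. (Small aside: ``normalized volumes of $\bT$-equivariant valuations'' is not the right language---normalized volume is a local invariant of a klt singularity, whereas this argument concerns the global $\delta$-invariant.)
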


\begin{proof}

We first consider the case $\delta(V)\geq 1$, i.e. $V$ is K-semistable. By above Lemma \ref{conebridgelemma}, one knows 
$\delta(Y,(1-\frac{r}{n+1})V_\infty)\geq 1$, which directly implies $\delta(Y)\geq \frac{(n+2)r}{(n+1)(r+1)}$. So in this case we conclude the result. We next deal with the case $\delta(V)<1$. Choose a sequence positive rational numbers $\delta_i$ which tends to $\delta(V)$ and $\delta_i<\delta(V)$ for each $i$. For each $\delta_i$ one can choose an effective $\bQ$-divisor $\Delta_i\sim_\bQ -K_V$ such that $(V,(1-\delta_i)\Delta_i)$ is a K-semistable log Fano pair (even uniformly K-stable), see \cite{BL18b,CRZ19}. Then we know $L\sim_\bQ -\frac{1}{r_i}(K_V+(1-\delta_i)\Delta_i)$, where $r_i=\delta_ir\leq n+1$. By Lemma \ref{conebridgelemma}, one sees $(Y,(1-\delta_i)\Delta_{i,Y}+(1-\frac{r_i}{n+1})V_\infty)$ is K-semistable, where $\Delta_{i,Y}$ is a divisor on $Y$ naturally extended by $\Delta_i$. Then we have $\delta(Y,(1-\delta_i)\Delta_{i,Y}+(1-\frac{r_i}{n+1})V_\infty)\geq 1$. It is clear that 
$$-(K_Y+(1-\delta_i)\Delta_{i,Y}+(1-\frac{r_i}{n+1})V_\infty)\sim_\bQ \frac{(n+2)r_i}{n+1}V_\infty\sim_\bQ \frac{(n+2)r_i}{(n+1)(r+1)}(-K_Y),$$
hence $\delta(Y)\geq \frac{(n+2)r_i}{(n+1)(r+1)}$. Recall $r_i=\delta_ir$, and let $i$ tends to infinity, one obtains
$$\delta(Y)\geq \frac{(n+2)r}{(n+1)(r+1)}\delta(V) ,$$ concluded.
\end{proof}

\begin{proof}[Proof of Theorem \ref{deltacone} for $0<r\leq n+1$ and $c=0$]
The proof is a combination of Theorem \ref{uppercone} and Theorem \ref{lowercone}.
\end{proof}

\begin{proof}[Proof of Corollary \ref{kssdeltacone}]
If $V$ is K-semistable, then the Fano index cannot be larger than $n+1$, hence $r\leq n+1$. So by Theorem \ref{deltacone}, $\delta(Y)= \frac{(n+2)r}{(n+1)(r+1)}$. By Lemma \ref{specialcomp}, the value is achieved by $V_0$. The proof is finished. If $V$ is not K-semistable, then by \cite[Theorem D]{BJ20}, we have
$$\delta(V)^n\cdot \vol(-K_V)\leq (n+1)^n .$$
As $-K_V\sim_\bQ rL$ and $L$ is an ample line bundle, one has $\delta(V)^nr^n\leq (n+1)^n$, which implies $r\delta(V)\leq n+1$. Hence 
$$\frac{(n+2)r}{(n+1)(r+1)}\delta(V)\leq  \frac{n+2}{r+1}.$$ 
Apply Theorem \ref{deltacone} to the case $c=0$ one directly concludes.
\end{proof}

\section{K-stability with optimal angle}\label{section:optimalangle}

In this section, as in the setting of Theorem \ref{thm:optimalangle}, we fix $V$ to be a projective Fano manifold of dimension $n$, and $S$ is a smooth divisor on $V$ such that $S\sim_\bQ -\lambda K_V$ for some positive rational number $\lambda$. Write
$$E(V,S):=\{a\in [0,1)| \textit{$(V,aS)$ is K-semistable} \}, $$
we prove the following theorem on optimal angle.

\begin{theorem}{\rm (= Theorem \ref{thm:optimalangle})}
Notation as above, suppose V and S are both K-semistable and $0<\lambda<1$, then $E(V,S)= [0,1-\frac{r}{n}]$, where $r=\frac{1}{\lambda}-1$. In particular, if $V$ and $S$ are both K-polystable, then the pair $(V,aS)$ is K-polystable for any $a\in [0, 1-\frac{r}{n})$. We also note that $1-\frac{r}{n}\geq 0$, since $\lambda\geq \frac{1}{n+1}$ if $V$ is K-semistable.

\end{theorem}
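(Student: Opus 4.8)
The plan is to prove the two inclusions $E(V,S)\subseteq[0,1-\tfrac{r}{n}]$ and $[0,1-\tfrac{r}{n}]\subseteq E(V,S)$ separately. Throughout I use that $S\sim_\bQ-\lambda K_V$ with $\lambda=\tfrac1{r+1}$ forces $-K_V\sim_\bQ(r+1)S$, and that $S$ is ample, so for any $a,t\ge0$ the class $-(K_V+aS)-tS\sim_\bQ(r+1-a-t)S$ has volume $(r+1-a-t)_+^{\,n}(S^n)$.

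For the inclusion ``$\subseteq$'' I would compute the invariants of $\ord_S$. One has $A_{(V,aS)}(\ord_S)=1-a$ (as $A_V(S)=1$ and $\ord_S(S)=1$), and the displayed volume formula gives, by a one–line integration,
$$S_{(V,aS)}(\ord_S)=\frac{1}{(r+1-a)^n}\int_0^{r+1-a}(r+1-a-t)^n\,\dt=\frac{r+1-a}{n+1},$$
so $\delta_{(V,aS)}(\ord_S)=\tfrac{(n+1)(1-a)}{r+1-a}$, which is $\ge1$ exactly when $a\le 1-\tfrac{r}{n}$. Since K-semistability of $(V,aS)$ forces $1\le\delta(V,aS)\le\delta_{(V,aS)}(\ord_S)$, this proves $E(V,S)\subseteq[0,1-\tfrac rn]$. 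The case $a=0$, together with Fujita's bound $(r+1)^n(S^n)=\vol(-K_V)\le(n+1)^n$ for K-semistable $V$ and $S^n\ge1$, also gives $r\le n$, hence $1-\tfrac rn\ge0$.

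For ``$\supseteq$'' fix $a\in[0,1-\tfrac rn]$; I must show $A_{(V,aS)}(E)\ge S_{(V,aS)}(E)$ for every prime divisor $E$ over $V$, and I would split on whether $c_V(E)\subseteq S$. If $c_V(E)\not\subseteq S$ then $\ord_E(S)=0$, so $A_{(V,aS)}(E)=A_V(E)$; moreover $-(K_V+aS)\sim_\bQ\tfrac{r+1-a}{r+1}(-K_V)$ and a change of variables in the defining integral gives $S_{(V,aS)}(E)=\tfrac{r+1-a}{r+1}S_V(E)$, whence $\delta_{(V,aS)}(E)=\tfrac{r+1}{r+1-a}\,\delta_V(E)\ge\tfrac{r+1}{r+1-a}\ge1$ using $\delta(V)\ge1$ and $a\ge0$. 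The remaining, and genuinely delicate, case is $c_V(E)\subseteq S$, where one must use the K-semistability of $S$. By adjunction $S|_S\sim_\bQ-\tfrac1rK_S$, so Theorem \ref{deltacone} and Lemma \ref{conebridgelemma} apply to the projective cone $Y_S$ over $(S,S|_S)$ (dimension $n$, parameter $r$, with $r\le n$ as above); in particular $(Y_S,(1-\tfrac rn)V_\infty)$ is K-semistable. The key point is that an analytic (or formal) neighbourhood of $S$ in $V$ is isomorphic to one of $V_\infty$ in $Y_S$ — both are neighbourhoods of the zero section of the total space of the line bundle $S|_S$ — which identifies $\ord_E$ with a valuation over $Y_S$ centred along $V_\infty$, matches $A_{(V,aS)}(E)$ with $A_{(Y_S,aV_\infty)}(E)$, and lets one compare $S_{(V,aS)}(E)$ with the corresponding $S$-invariant on the cone; feeding in the cone computations of Theorem \ref{deltacone} together with $\delta(S)\ge1$ and the numerical inequality $a\le1-\tfrac rn$ then yields $\delta_{(V,aS)}(E)\ge1$. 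Combining the two cases gives $\delta(V,aS)\ge1$, hence $[0,1-\tfrac rn]\subseteq E(V,S)$.

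Finally, for the K-polystable refinement: for $a$ in the open interval $[0,1-\tfrac rn)$ the first computation gives the strict inequality $A_{(V,aS)}(\ord_S)-S_{(V,aS)}(\ord_S)>0$, so $\ord_S$ induces no product-type degeneration of $(V,aS)$, and the comparison above likewise becomes strict off the boundary; combined with the K-polystability of $V$ and of $S$ (via the polystable version of the cone correspondence), this upgrades K-semistability of $(V,aS)$ to K-polystability. I expect the main obstacle to be the second case of the lower bound — making the neighbourhood identification and, above all, the comparison of the $S$-invariants of $(V,aS)$ and of $(Y_S,aV_\infty)$ precise and uniform in the valuation, which is exactly the step into which the hypothesis $a\le1-\tfrac rn$ must be inserted.
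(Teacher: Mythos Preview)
Your argument for the inclusion $E(V,S)\subseteq[0,1-\tfrac{r}{n}]$ is correct and in fact more direct than the paper's: you simply compute $\delta_{(V,aS)}(\ord_S)=\tfrac{(n+1)(1-a)}{r+1-a}$ and read off the threshold, whereas the paper destabilises via the Futaki invariant of the degeneration-to-the-normal-cone test configuration (Lemma~\ref{converseinclusion}). Your derivation of $r\le n$ from Fujita's volume bound is also fine.

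The genuine gap is in the converse inclusion, in the case $c_V(E)\subseteq S$. Two issues. First, the assertion that an analytic or formal neighbourhood of $S$ in $V$ is isomorphic to one of $V_\infty$ in $Y_S$ is not true in general: the holomorphic tubular neighbourhood theorem fails, and agreement of normal bundles only gives an isomorphism of \emph{first} infinitesimal neighbourhoods. Second, and more seriously, even if such an identification held, the $S$-invariant is a global quantity built from $\vol\big(-(K_V+aS)-tE\big)$ on all of $V$, so a local identification near $S$ would not by itself yield $S_{(V,aS)}(E)=S_{(Y_S,aV_\infty)}(E')$; one would still have to compare the full graded linear series, and this is exactly where your sketch stops. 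You correctly flag this as the main obstacle, but it is not a detail to be filled in---the approach as written does not go through.

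The paper avoids this entirely (Lemma~\ref{inclusion}). Rather than estimating $\delta_{(V,aS)}(E)$ valuation by valuation, it observes that $(V,(1-\tfrac{r}{n})S)$ specially degenerates to $\big(C_p(S,M),(1-\tfrac{r}{n})S_\infty\big)$, which is K-semistable by the cone--base correspondence (Lemma~\ref{conebridgelemma}) since $S$ is K-semistable; lower semicontinuity of $\delta$ then gives K-semistability of the generic fibre, and interpolation with $a=0$ fills in the whole interval. So the K-semistability of $S$ enters through a degeneration and semicontinuity, not through any local comparison of valuations.

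For the K-polystable refinement your outline is too loose: strict inequality for $\ord_S$ and for divisors with centre off $S$ does not by itself rule out non-product test configurations with vanishing Futaki invariant. The paper's argument here is more delicate, tracking the K-polystable degeneration of $(V,aS)$ through two successive specialisations and invoking uniqueness of K-polystable limits together with the K-polystability of both $V$ and $S$ to force the degeneration to be of product type.
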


In the above theorem, we do not consider the case $\lambda\geq 1$, since in this case the K-stability of the pair $(V,aS)$ is well known to experts.  We state it here and provide a proof for the readers' convenience.

\begin{theorem}\label{geq1}
Suppose $V$ is a smooth K-semistable Fano manifold, and $S\sim_\bQ -\lambda K_V$ is a smooth divisor on V for some positive rational number $\lambda\geq 1$. Then $(V,aS)$ is K-semistable for any $a\in [0,\frac{1}{\lambda})$ and K-stable for $a\in (0,\frac{1}{\lambda})$. 
\end{theorem}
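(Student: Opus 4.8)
The plan is to estimate $\delta_{(V,aS)}(\ord_E)$ for every prime divisor $E$ over $V$ and then take the infimum, in the spirit of Lemma \ref{delta0bundle}. First note that $S\sim_\bQ-\lambda K_V$ gives $-(K_V+aS)\sim_\bQ(1-a\lambda)(-K_V)$, which is ample since $a<1/\lambda$; as $V$ and $S$ are smooth and $a\in[0,1)$, the pair $(V,aS)$ is klt, hence log Fano. For a prime divisor $E$ over $V$ one has $A_{(V,aS)}(E)=A_V(E)-a\,\ord_E(S)$, and by the homogeneity of the $S$-invariant applied to $-(K_V+aS)\sim_\bQ(1-a\lambda)(-K_V)$,
\[
S_{(V,aS)}(E)=(1-a\lambda)\,S_V(E),
\]
where $S_V$ is computed with respect to $-K_V$. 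Therefore $\delta_{(V,aS)}(\ord_E)=\dfrac{A_V(E)-a\,\ord_E(S)}{(1-a\lambda)\,S_V(E)}$.

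Next I would feed in the two hypotheses. Since $V$ is K-semistable, $\delta(V)\geq1$, so $A_V(E)\geq\delta(V)S_V(E)\geq S_V(E)$ for every $E$. Since $V$ and $S$ are both smooth, $(V,S)$ is log smooth, hence log canonical (indeed plt, $S$ being irreducible), so $A_{(V,S)}(E)=A_V(E)-\ord_E(S)\geq0$, that is $\ord_E(S)\leq A_V(E)$, with strict inequality unless $E=S$. Substituting $\ord_E(S)\leq A_V(E)$ in the numerator and $S_V(E)\leq A_V(E)$ in the denominator,
\[
\delta_{(V,aS)}(\ord_E)\ \geq\ \frac{(1-a)\,A_V(E)}{(1-a\lambda)\,S_V(E)}\ \geq\ \frac{(1-a)\,\delta(V)}{1-a\lambda}\ \geq\ \frac{1-a}{1-a\lambda}\ \geq\ 1,
\]
the last step using $\lambda\geq1$. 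Taking the infimum over $E$ yields $\delta(V,aS)\geq\frac{(1-a)\delta(V)}{1-a\lambda}\geq1$, so $(V,aS)$ is K-semistable for all $a\in[0,1/\lambda)$.

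For K-stability it suffices to show $\delta(V,aS)>1$, since $\delta>1$ is equivalent to uniform K-stability and in particular implies K-stability (cf. \cite{FO18,BJ20}). If $\delta(V)>1$, or if $\lambda>1$ and $a>0$, then the chain above already gives $\delta(V,aS)>1$. It remains to treat $\delta(V)=\lambda=1$ and $a\in(0,1)$. Here equality $\delta_{(V,aS)}(\ord_E)=1$ in the chain would force $\ord_E(S)=A_V(E)$, i.e. $E$ an lc place of the plt pair $(V,S)$, hence $E=S$; but $-K_V-tS\sim_\bQ(1-t)(-K_V)$ gives $S_V(S)=\int_0^1(1-t)^n\,dt=\frac1{n+1}$, so $\delta_{(V,aS)}(\ord_S)=\frac{1-a}{(1-a)/(n+1)}=n+1>1$. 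The same computation applies verbatim to any quasi-monomial valuation $v$ (the bound $\ord_v(S)\leq A_V(v)$ holds for all valuations as $(V,S)$ is lc, and the only lc places of $(V,S)$ are the rescalings of $\ord_S$), so $\delta_{(V,aS)}(v)>1$ for every quasi-monomial $v$; since $\delta(V,aS)$ is computed by such a valuation, $\delta(V,aS)>1$.

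The only delicate point is this last step of the borderline case: one must invoke that the $\delta$-invariant of a log Fano pair is attained at a quasi-monomial valuation (or, alternatively, obtain a uniform estimate via adjunction to the Calabi--Yau divisor $S$, controlling the valuations whose center tends to $S$). Everything else is the same routine volume bookkeeping already carried out in Lemmas \ref{delta0bundle} and \ref{specialcomp}, with the role of $r$ played by $\lambda$.
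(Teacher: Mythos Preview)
Your argument for $\lambda>1$ is identical to the paper's. The case $\lambda=1$ is where the two diverge. The paper does not continue the direct $\delta$-estimate; instead it passes to the $\alpha$-invariant and shows $\alpha(V,(1-\beta)S)=1$ for all sufficiently small $\beta>0$ by inversion of adjunction to the Calabi--Yau divisor $S$: if some $D\sim_\bQ-K_V$ made $(V,(1-\beta)S+\beta D)$ non-lc, then after stripping the $S$-component one would get $(S,\beta D|_S)$ non-lc, impossible for $\beta\ll1$. Tian's criterion then gives (uniform) K-stability of $(V,(1-\beta)S)$, and interpolation with the K-semistable endpoint $a=0$ covers all $a\in(0,1)$.

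Your route is more uniform---one estimate for all $\lambda\geq1$---but in the borderline case $\delta(V)=\lambda=1$ the conclusion $\delta(V,aS)>1$ rests on the infimum being attained by some valuation. That is now a theorem (Liu--Xu--Zhuang), but the existence results available in the paper's references (e.g.\ \cite[Theorem~6.7]{BLZ19}) only cover $\delta\leq1$, which is circular here. However, for the statement as written you do not actually need $\delta>1$: you have shown $A_{(V,aS)}(E)>S_{(V,aS)}(E)$ for \emph{every} prime divisor $E$ over $V$, and by the Fujita--Li valuative criterion this already yields K-stability (any non-product test configuration reduces via \cite{LX14} to a special one, whose Donaldson--Futaki invariant is a positive multiple of $\beta(E)$ for the induced divisor $E$). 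So your proof is complete for K-stability once you invoke this instead of aiming for $\delta>1$; the paper's $\alpha$-invariant approach has the advantage of being entirely self-contained and of delivering uniform K-stability near $a=1$ without appeal to optimal valuations.
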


\begin{proof}
We first deal with the case $\lambda=1$, i.e. $S\sim_\bQ -K_V$, then $(V,S)$ is a log smooth Calabi-Yau pair. We show that $\alpha(V,(1-\beta)S)=1$ for sufficiently small rational $0<\beta\ll 1$ (see also \cite{Ber13}). For the definition of alpha invariant of a log Fano pair please refer to \cite{Tian87,Che01,CS08,BJ20} etc. It is clear that $\alpha(V,(1-\beta)S)\leq 1$. Suppose the inequality is strict, then we can find a divisor $D\sim_\bQ -K_V$ such that the pair $(V,(1-\beta)S+\beta D)$ is not log canonical. After subtracting certain amount of $S$ from $D$ we may assume that $S\nsubseteq\operatorname{Supp}(D)$, then by inversion of adjunction $(S,\beta D|_S)$ is not log canonical, contradicting to the choice of $\beta$ (note that $\beta$ is sufficiently small). So $(V,(1-\beta)S)$ is K-stable for sufficiently small $0<\beta\ll 1$. By interpolation of K-stability, see \cite[Proposition 2.13]{ADL19}, we conclude the result for $\lambda=1$.

For the case $\lambda>1$, it is clear that we should require $a<\frac{1}{\lambda}$ to
make sure that $(V,aS)$ is a log Fano pair. Then for any prime divisor $E$ over $V$, we have 
$$\frac{A_{V,aS}(E)}{S_{V,aS}(E)}\geq \frac{(1-a)A_V(E)}{(1-\lambda a)S_V(E)}\geq \frac{1-a}{1-\lambda a}>1$$
for any $a\in (0,\frac{1}{\lambda})$. Thus $\delta(V,aS)>1$ for any $a\in (0,\frac{1}{\lambda})$. Concluded.
\end{proof}

We turn to prove Theorem \ref{thm:optimalangle}. Let us first begin with the following lemma, see \cite[Lemma 2.12]{LZ19} or \cite[Theorem 5.1]{LZhu20}.

\begin{lemma}\label{inclusion}
Notation as in Theorem \ref{thm:optimalangle}. Suppose V and S are both K-semistable, then $(V,(1-\frac{r}{n})S)$ is K-semistable and $[0, 1-\frac{r}{n}]\subset E(V,S)$.
\end{lemma}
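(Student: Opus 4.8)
The plan is to reduce, via interpolation of K-semistability, to the single critical value $a_0 := 1-\tfrac rn$, to exhibit $(V,a_0S)$ as the general fibre of an explicit $\bQ$-Gorenstein degeneration whose central fibre is a projective cone over $S$, and to read off the K-semistability of that central fibre from Theorem \ref{deltacone}. First I would record the adjunction input: since $S\sim_\bQ-\lambda K_V$ with $0<\lambda<1$, we get $K_S=(K_V+S)|_S\sim_\bQ(1-\tfrac1\lambda)\,S|_S=-r\,(S|_S)$, so $S$ is a smooth (hence klt) Fano variety of dimension $n-1$ with $S|_S\sim_\bQ-\tfrac1r K_S$; as $S$ is K-semistable, the Fujita--Liu bound \cite{Fuj18,Liu18} forces $r\le(n-1)+1=n$, so $a_0=1-\tfrac rn\in[0,1)$. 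Let $Y_S$ be the projective cone over $S$ associated to $S|_S$, with infinity section $V_\infty$. Applying Theorem \ref{deltacone} to the $\bQ$-Fano base $S$ (so $n$ is replaced by $n-1$ throughout), with its parameter equal to our $r$ and with $c=a_0$, and using the identity $r+1-a_0=\tfrac{(n+1)r}{n}$, the three terms of the minimum come out to be $\delta(S)$, $1$ and $1$; since $\delta(S)\ge1$ this gives $\delta(Y_S,a_0V_\infty)=1$, so $(Y_S,(1-\tfrac rn)V_\infty)$ is K-semistable. (Equivalently this is Lemma \ref{conebridgelemma} applied to $S$.)

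The heart of the matter---and the step I expect to be the main obstacle---is to produce a $\bQ$-Gorenstein degeneration of $(V,a_0S)$ with central fibre $(Y_S,a_0V_\infty)$. The key point is that $-(K_V+a_0S)\sim_\bQ(1-a_0\lambda)(-K_V)=\tfrac{(n+1)r}{n}\,S$ is proportional to the divisor $S$ itself. Choosing $m$ sufficiently divisible and setting $m'_m=\tfrac{(n+1)r}{n}m$, the anticanonical-type ring $R=\bigoplus_m H^0\!\big(V,-m(K_V+a_0S)\big)=\bigoplus_m H^0(V,m'_mS)$ carries the $\ord_S$-filtration $\mathcal F^pR_m=H^0(V,(m'_m-p)S)$, and reindexing the associated graded by $q=m'_m-p$ (and using that $H^0(V,qS)\to H^0(S,qS|_S)$ is onto for $q\gg0$) identifies $\Proj(\gr_{\mathcal F}R)$ with the projective cone $Y_S$ over $S$ with respect to $S|_S$; under this the divisor $S$---a generic member of $|S|$, lying in $\mathcal F^1$ with nonzero image in $\gr^1$---degenerates to the infinity section $V_\infty$. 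The Rees algebra of $\mathcal F$ then yields a flat family over $\bA^1$ with general fibre $(V,a_0S)$ and central fibre $(Y_S,a_0V_\infty)$; by Theorem \ref{conesing} (cf. Lemma \ref{lemma 2.1}(5)) the central fibre is klt log Fano and $-(K_{Y_S}+a_0V_\infty)$ is ample, precisely because $a_0$ lies in the admissible range guaranteed by $0<r\le n$, so this is a genuine special test configuration of log Fano pairs. This construction and its verification are exactly \cite[Lemma 2.12]{LZ19} (see also \cite[Theorem 5.1]{LZhu20}), which I would invoke rather than redo.

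Finally I would assemble the pieces. Lower semicontinuity of $\delta$ (openness of K-semistability in $\bQ$-Gorenstein families) \cite{BLX19}, applied to the family just constructed, transfers K-semistability from the central fibre $(Y_S,a_0V_\infty)$ to the general fibre $(V,a_0S)$; this is already the first assertion of the lemma. Then, since $(V,0)=V$ and $(V,a_0S)$ are both K-semistable, interpolation of K-semistability (cf. \cite[Proposition 2.13]{ADL19}) shows that $(V,aS)$ is K-semistable for every $a\in[0,a_0]$, i.e. $[0,1-\tfrac rn]\subset E(V,S)$. The only genuinely delicate point is the identification of the central fibre of the Rees degeneration with the cone $Y_S$ together with the check that the total space is $\bQ$-Gorenstein with klt log Fano special fibre; all the remaining ingredients are short formal computations or citations.
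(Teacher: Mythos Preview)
Your proposal is correct and follows essentially the same route as the paper: degenerate $(V,a_0S)$ to the projective cone $(C_p(S,S|_S),a_0V_\infty)$ via \cite[Lemma 2.12]{LZ19}, read off K-semistability of the cone from the K-semistability of $S$ (the paper cites \cite[Proposition 2.11]{LZ19}, i.e.\ Lemma~\ref{conebridgelemma}, whereas you appeal directly to Theorem~\ref{deltacone}---these are equivalent here), transfer this to the general fibre by lower semicontinuity of $\delta$ (paper cites \cite{BL18b}, you cite \cite{BLX19}), and conclude by interpolation \cite[Proposition 2.13]{ADL19}. Your write-up is more detailed on the Rees construction and the adjunction input, but the architecture and key citations are the same.
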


\begin{proof}
Write $M:= S|_S\sim_\bQ -\frac{1}{r}K_S$ to be an ample line bundle on $S$, then by \cite[Lemma 2.12]{LZ19}, $(V,(1-\frac{r}{n})S)$ can be specially degenerated to $(C_p(S, M), (1-\frac{r}{n})S_\infty)$, where $(C_p(S, M)$ is the projective cone over S associated to $M$, and $S_\infty$ is the infinite section. Since $S$ is K-semistable, by \cite[Proposition 2.11]{LZ19}, $(C_p(S, M), (1-\frac{r}{n})S_\infty)$ is K-semistable, hence $(V,(1-\frac{r}{n})S)$ is K-semistable by lower semicontinuity of delta invariants, see \cite{BL18b}. By interpolation of K-stability (\cite[Proposition 2.13]{ADL19}) we know $[0, 1-\frac{r}{n}]\subset E(V,S)$.
\end{proof}

In the above lemma, we have proved one direction of inclusion, the converse inclusion is implied by following lemma. This has been proved in \cite[Theorem 1.4]{LS14} in analytic setting, we give a purely  algebraic proof below.

\begin{lemma}\label{converseinclusion}
Notation as above, if $1-\frac{r}{n}<a<1$, then $(V,aS)$ is K-unstable.
\end{lemma}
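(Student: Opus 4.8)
The plan is to exhibit, for every $a$ with $1-\frac{r}{n}<a<1$, a destabilizing divisorial valuation over $V$, and the natural candidate is the divisor obtained by deforming $(V,aS)$ to the projective cone over $S$ and taking the valuation coming from the zero section (equivalently, a suitable weighted blowup adapted to $S$). Concretely, since $S\sim_\bQ-\lambda K_V$ with $\lambda=\frac{1}{r+1}$, I would work directly on $V$ with the divisor $E$ obtained by blowing up along $S$ in a way that mimics the cone construction, so that $A_V(E)$ and $S_V(E)$ can be read off from the geometry of $S\subset V$; alternatively, one may use the degeneration $(V,aS)\rightsquigarrow (C_p(S,M),aS_\infty)$ from Lemma \ref{inclusion} together with lower semicontinuity of $\delta$ and the cone formula (Theorem \ref{deltacone}) applied to the cone $C_p(S,M)$ over the Fano variety $S$ with $M=S|_S\sim_\bQ-\frac{1}{r}K_S$.

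Taking the latter route, which reuses machinery already in the paper: by Theorem \ref{deltacone} applied to the $\bQ$-Fano variety $S$ (of dimension $n-1$) with the polarization $M\sim_\bQ -\frac{1}{r}K_S$ and cone coefficient $c=a$, one gets an explicit formula for $\delta(C_p(S,M),aS_\infty)$ as a minimum of three quantities, one of which is $\frac{(n+1)(1-a)}{r+1-a}$ (the term computed by $S_\infty$, with $n$ replaced by $n-1$ in the statement of Theorem \ref{deltacone}). A direct computation shows $\frac{(n+1)(1-a)}{r+1-a}<1$ precisely when $(n+1)(1-a)<r+1-a$, i.e. $n(1-a)<r$, i.e. $a>1-\frac{r}{n}$. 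Hence for such $a$ the cone $(C_p(S,M),aS_\infty)$ has $\delta<1$, so it is K-unstable; by lower semicontinuity of delta invariants along the degeneration (\cite{BL18b}), $(V,aS)$ is also K-unstable. I would need to check that $S$ being K-semistable is not actually required here — only that $S$ is $\bQ$-Fano so that Theorem \ref{deltacone} applies and the $S_\infty$-term is well-defined — and that $r\leq n$ holds in the relevant range so the cone is genuinely log Fano (this follows from $\lambda\geq\frac{1}{n+1}$, giving $r\leq n$, with $r<n$ needed for $1-\frac{r}{n}>0$).

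If instead I want a self-contained valuation-theoretic argument on $V$ itself, I would take $E=\mathrm{Bl}_S V$'s exceptional divisor is not quite right since $S$ is a divisor; rather I would use the valuation $\ord_E$ where $E$ corresponds to the zero section after the cone degeneration, pulled back to a valuation over $V$. Its log discrepancy is $A_{V,aS}(E)=A_{V,aS}$ evaluated via the cone formula $K_{\tilde X}+\cdots$; the key numerics are $A_{V,aS}(E)=(1-a)\cdot(\text{something})$ and $S_{V,aS}(E)$ computed by an integral of volumes of $-(K_V+aS)-tE$, which by the cone structure reduces to a one-variable integral over the Okounkov-type body of $(S,M)$. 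Matching this against Lemma \ref{specialcomp} (the $V_\infty$ computation) gives $\frac{A}{S}=\frac{(n+1)(1-a)}{r+1-a}<1$ in the asserted range.

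\textbf{Main obstacle.} The delicate point is not the inequality $\frac{(n+1)(1-a)}{r+1-a}<1\Leftrightarrow a>1-\frac{r}{n}$, which is elementary, but rather justifying that the relevant destabilizing object actually lives over $V$ — i.e. that K-instability of the central fiber $(C_p(S,M),aS_\infty)$ implies K-instability of $(V,aS)$. This uses openness/lower semicontinuity of K-semistability (equivalently of $\delta\geq 1$) under $\bQ$-Gorenstein degenerations, which is exactly \cite{BL18b}; I would make sure the hypotheses there are met, in particular that the family $(V,aS)\rightsquigarrow(C_p(S,M),aS_\infty)$ is a $\bQ$-Gorenstein family of log Fano pairs for the given $a$, which holds since $a<1$ keeps the pair log Fano and $r<n$ is in force. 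A secondary subtlety is bookkeeping the dimension shift (cone over $S$ has dimension $n$, and $S$ has dimension $n-1$, so Theorem \ref{deltacone} is applied with "$n$" $=n-1$); care is needed so the exponents in $\frac{(n+1)(1-c)}{r+1-c}$ come out with $n+1$ and not $n+2$ or $n$.
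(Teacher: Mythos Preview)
Your main argument has a genuine gap: lower semicontinuity of $\delta$ (as in \cite{BL18b}) says $\delta(\mX_0,\Delta_0)\le\liminf_{t\to0}\delta(\mX_t,\Delta_t)$, equivalently that K-semistability is an \emph{open} condition on the base. Thus from $\delta(C_p(S,M),aS_\infty)<1$ you cannot deduce $\delta(V,aS)<1$; K-semistable varieties degenerate to K-unstable ones all the time, and the implication you invoke (``central fibre K-unstable $\Rightarrow$ general fibre K-unstable'') is simply false in general. The paper closes this gap differently: it treats the degeneration as a special test configuration $(\bar{\mV},a\bar{\mS};\bar{\mL})$, writes $\Fut(\bar{\mV},a\bar{\mS};\bar{\mL})$ as an affine function of $a$ which vanishes at $a=1-\tfrac{r}{n}$ and is $\ge0$ for smaller $a$ (by Lemma~\ref{inclusion}), hence is $\le0$ for $a>1-\tfrac{r}{n}$. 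If $\Fut<0$ this already destabilises $(V,aS)$; if $\Fut=0$ and $(V,aS)$ were K-semistable, then \cite{LWX18} forces the central fibre to be K-semistable too, contradicting the cone formula. So Theorem~\ref{deltacone} enters only in this indirect edge case, not via semicontinuity.

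Your ``self-contained valuation-theoretic'' alternative is in fact the cleanest route, once you pick the right divisor: it is not the zero section but simply $S$ itself (under the degeneration $S$ goes to $S_\infty$, not $S_0$, which is why the $V_\infty$-term of Lemma~\ref{specialcomp} appears). Since $-(K_V+aS)\sim_\bQ(r+1-a)S$ with $S$ ample, one computes directly $A_{V,aS}(S)=1-a$ and
\[
S_{V,aS}(S)=\frac{1}{(r+1-a)^nS^n}\int_0^{r+1-a}(r+1-a-t)^nS^n\,dt=\frac{r+1-a}{n+1},
\]
so $\delta_{V,aS}(\ord_S)=\frac{(n+1)(1-a)}{r+1-a}<1$ precisely when $a>1-\tfrac{r}{n}$. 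This one-line computation already proves the lemma, with no degeneration, no semicontinuity, and no K-semistability hypothesis on $S$ or $V$.
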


\begin{proof}
As we have seen in the proof of Lemma \ref{inclusion},  by \cite[Lemma 2.12]{LZ19}, the pair $(V, aS)$ can be specially degenerated to $(C_p(S, M), aS_\infty)$, where $C_p(S, M)$ is the projective cone over $S$ associated to $M$, and $S_\infty$ is the infinite section. We denote this special test configuration by $\phi: (\bar{\mV},a\bar{\mS}; \bar{\mL})\to \bP^1$, where $\bar{\mL}:=-(K_{\bar{\mV}/\bP^1}+a\bar{\mS})$ is the polarization of this test configuration. Note that $-K_V\sim_\bQ (1+r)S$, so there is a $k\in\bQ$ such that $-K_{\bar{\mV}/\bP^1}\sim_\bQ (1+r)\bar{\mS}-\phi^*\mO_{\bP^1}(k)$, thus $\bar{\mL}\sim_\bQ (1+r-a)\bar{\mS}-\phi^*\mO_{\bP^1}(k)$. By \cite{LX14}, the generalised Futaki invariant (or Donaldson-Futaki invariant) of the test configuration  is as follows,
\begin{align*}
\Fut(\bar{\mV},a\bar{\mS}; \bar{\mL})
=&-\frac{1}{n+1}\frac{\bar{\mL}^{n+1}}{(-K_V-aS)^n}\\
=&-\frac{1}{n+1}\frac{\{(1+r-a)\bar{\mS}-\phi^*\mO_{\bP^1}(k)\}^{n+1}}{(-K_V-aS)^n}\\
=& k-\frac{1+r-a}{n+1}\frac{\bar{\mS}^{n+1}}{S^n}.
\end{align*}
By \cite[Lemma 2.12]{LZ19}, $\Fut(\bar{\mV},a\bar{\mS}; \bar{\mL})=0$ if $a=1-\frac{r}{n}$. By Lemma \ref{inclusion}, we know $\Fut(\bar{\mV},a\bar{\mS}; \bar{\mL})\geq 0$ when $a\leq 1-\frac{r}{n}$, so in the case $a>1-\frac{r}{n}$ we have $\Fut(\bar{\mV},a\bar{\mS}; \bar{\mL})\leq 0$. Suppose $a>1-\frac{r}{n}$, if
$\Fut(\bar{\mV},a\bar{\mS}; \bar{\mL})< 0$ then we are done;  if $\Fut(\bar{\mV},a\bar{\mS}; \bar{\mL})= 0$, then $\Fut(\bar{\mV},t\bar{\mS}; \bar{\mL})=0$ for any $t\in [0,1)$. Suppose $(V,aS)$ is K-semistable, then by \cite[Section 3]{LWX18}, the pair 
$(C_p(S,M),aS_\infty)$ is also K-semistable. By the next lemma, we know $\delta(C_p(S,M),aS_\infty)<1$ when $a>1-\frac{r}{n}$. The contradiction implies that $(V,aS)$ is K-unstable for $a>1-\frac{r}{n}$. The proof is finished.

\end{proof}

The following lemma is in fact a rephrase of Theorem \ref{deltacone}, we still state it here to fit our setting for the convenience of readers.

\begin{lemma}\label{deltaconeoverS}
Let S be a $\bQ$-Fano variety of dimension $n-1$ and $M$ an ample line bundle on S such that $M\sim_\bQ -\frac{1}{r}K_S$. Write $Y:=C_p(S,M)$ to be the projective cone over S associated to M, and $S_\infty$ the infinite section, then we have following formula computing delta invariant of $(C_p(S,M), aS_\infty)$,
$$\delta(C_p(S,M), aS_\infty)=\min \bigg\{\frac{(n+1)r}{n(r+1-a)}\delta(S),  \frac{(n+1)r}{n(r+1-a)}, \frac{(n+1)(1-a)}{r+1-a}\bigg\}. $$ 
In particular, when $a>1-\frac{r}{n}$, we have
$$\delta(C_p(S,M), aS_\infty)\leq \frac{(n+1)(1-a)}{r+1-a} <1.$$
\end{lemma}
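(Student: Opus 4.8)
The plan is to specialize Theorem \ref{deltacone} to the $(n-1)$-dimensional base $S$ together with the polarization $M \sim_\bQ -\frac{1}{r}K_S$, and then to extract the claimed inequality as an elementary consequence. Concretely, Theorem \ref{deltacone} applied to the pair $(S,M)$ of dimension $n-1$ (so that the projective cone $Y = C_p(S,M)$ has dimension $n$, and the role of ``$n$'' in the statement of Theorem \ref{deltacone} is played by $n-1$) gives directly
\begin{align*}
\delta(C_p(S,M), aS_\infty) = \min\bigg\{ \frac{(n+1)r}{n(r+1-a)}\delta(S),\ \frac{(n+1)r}{n(r+1-a)},\ \frac{(n+1)(1-a)}{r+1-a} \bigg\},
\end{align*}
which is precisely the asserted formula. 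The only thing to check here is the bookkeeping of dimensions: in Theorem \ref{deltacone} the base has dimension $n$ and the cone dimension $n+1$, so substituting $n \mapsto n-1$ produces exactly the three fractions above, with $(n+2) \mapsto (n+1)$, $(n+1) \mapsto n$, and the cone $Y$ of dimension $n$. Since $S$ is assumed to be a $\bQ$-Fano variety and $M$ is an ample line bundle proportional to $-K_S$, the hypotheses of Theorem \ref{deltacone} are met with $c = a$, so no extra argument is needed for the first assertion.

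For the ``in particular'' clause, I would argue that when $a > 1 - \frac{r}{n}$ the third term in the minimum is already $< 1$, hence so is the minimum. Indeed, $\delta(C_p(S,M), aS_\infty) \leq \frac{(n+1)(1-a)}{r+1-a}$ trivially, and the inequality $\frac{(n+1)(1-a)}{r+1-a} < 1$ is equivalent (the denominator $r+1-a$ is positive since $a < 1 \leq r+1$) to $(n+1)(1-a) < r+1-a$, i.e. to $n(1-a) < r$, i.e. to $a > 1 - \frac{r}{n}$. This is a one-line calculation, so there is essentially no obstacle in this part.

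There is no serious obstacle anywhere in this lemma: it is a restatement of Theorem \ref{deltacone} packaged for the notational setup of Section \ref{section:optimalangle}, plus a trivial rearrangement of an inequality. The only point that warrants a sentence of care is the dimension shift $n \mapsto n-1$ (so that ``$\delta(V)$'' in Theorem \ref{deltacone} becomes ``$\delta(S)$'' here, and the numerical constants shift accordingly), and the observation that $r+1-a > 0$ so that clearing denominators preserves the direction of the inequality. I would write the proof in two short paragraphs: one invoking Theorem \ref{deltacone} with the dimension substitution to get the formula, and one doing the elementary manipulation $\frac{(n+1)(1-a)}{r+1-a} < 1 \iff a > 1 - \frac{r}{n}$ to conclude.
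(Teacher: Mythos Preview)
Your proposal is correct and matches the paper's own treatment: the paper explicitly says this lemma ``is in fact a rephrase of Theorem \ref{deltacone}'' and provides no separate proof, so your dimension substitution $n \mapsto n-1$ together with the elementary check that $\frac{(n+1)(1-a)}{r+1-a} < 1 \iff a > 1 - \frac{r}{n}$ is exactly what is intended.
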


\begin{proof}[Proof of Theorem \ref{thm:optimalangle}]
The proof is a combination of Lemma \ref{inclusion}, Lemma \ref{converseinclusion}, Lemma \ref{deltaconeoverS}, and following lemma on interpolation for K-polystability.

\end{proof}

\begin{lemma}
Let $V$ be an $n$-dimensional Fano manifold and $S$ a smooth prime divisor on $V$ such that $-K_V\sim_\bQ (1+r)S$ for some positive rational number $r$. Assume the following two conditions,
\begin{enumerate}
\item V and S are both K-polystable,
\item there is a positive rational number $0<a<1$ such that $(V,aS)$ is K-semistable,
\end{enumerate}
then we have that $(V,a'S)$ is K-polystable for any rational $0<a'<a$.
\end{lemma}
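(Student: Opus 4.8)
The plan is to combine the interpolation technique for K-polystability (the statement we are asked to prove for $0<a'<a$) with the K-semistability at the endpoint $a'=0$, and then upgrade semistability to polystability using the fact that $V$ is K-polystable. More precisely, I would argue as follows. First, by condition (2), $(V,aS)$ is K-semistable, and by condition (1), $(V,0\cdot S)=V$ is K-polystable, hence in particular K-semistable; so for every $a'\in[0,a]$ the pair $(V,a'S)$ is K-semistable by the standard convexity (interpolation) of the delta invariant in the coefficient, e.g.\ $\delta(V,a'S)\geq 1$ follows from $\delta(V,0)\geq1$ and $\delta(V,aS)\geq1$ together with concavity of $a'\mapsto S_{V,a'S}(E)^{-1}A_{V,a'S}(E)$ along each valuation (cf.\ \cite[Proposition 2.13]{ADL19}). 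This settles K-semistability of $(V,a'S)$ for $0<a'<a$; the real content is to rule out strictly K-semistable (non-polystable) behaviour.

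Next I would use the structure of optimal destabilizations. Suppose for contradiction that $(V,a'S)$ is K-semistable but not K-polystable for some $0<a'<a$. Then $(V,a'S)$ admits a non-trivial special test configuration $(\mathcal V,a'\mathcal S;\mathcal L)$ with $\Fut=0$ whose central fibre $(V_0,a'S_0)$ is K-semistable but not isomorphic to $(V,a'S)$ (this is the theory of \cite{LWX18,BX19}: a K-semistable non-K-polystable log Fano degenerates to a strictly semistable one, and one may take the degeneration to be product-type induced by a one-parameter subgroup only when it is already polystable). The key point is that this test configuration is necessarily induced by a $\mathbb C^*$-action degenerating $V$ itself (the divisor $S$ being moved along), because $-K_V\sim_\bQ(1+r)S$ forces any such degeneration of the pair to come from a degeneration of $V$. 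Restricting the one-parameter subgroup to $a'=0$ gives a special test configuration of $V$ with $\Fut=0$; since $V$ is K-polystable, this test configuration is a product, i.e.\ induced by an algebraic subgroup of $\Aut(V)$. Hence the original degeneration of $(V,a'S)$ is also a product, so $(V_0,a'S_0)\cong(V,a'S)$, contradicting non-polystability.

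The step I expect to be the main obstacle is the rigidity claim in the previous paragraph: that a $\Fut=0$ special test configuration of $(V,a'S)$ with $0<a'<a$ must, after passing to the associated one-parameter subgroup, degenerate $V$ itself via a test configuration that again has $\Fut=0$. The subtlety is bookkeeping of the Donaldson--Futaki invariant under changing the coefficient: one has an identity of the form $\Fut(\mathcal V,a'\mathcal S;\mathcal L_{a'}) = (1-a')\,\Fut(\mathcal V;\mathcal L_0) + a'\cdot(\text{term involving }\mathcal S)$, and one must show that $\Fut$ vanishing at $a'$ together with $\Fut\geq0$ at $a'=0$ (from K-semistability of $V$) and at the semistable locus forces $\Fut(\mathcal V;\mathcal L_0)=0$; this is exactly the linear/affine behaviour of $\Fut$ in the coefficient, analogous to the computation in the proof of Lemma \ref{converseinclusion}. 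Once this linearity is in hand, the polystability of $V$ closes the argument via \cite{LWX18}. I would therefore organize the write-up as: (i) K-semistability of $(V,a'S)$ by interpolation; (ii) affine dependence of $\Fut$ on the coefficient along any fixed special test configuration; (iii) reduction of a hypothetical strictly semistable degeneration at $a'$ to a $\Fut=0$ degeneration of $V$; (iv) conclude by K-polystability of $V$ and \cite[Section 3]{LWX18} that the degeneration is a product, hence $(V,a'S)$ is K-polystable.
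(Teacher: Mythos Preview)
Your outline is on the right track through step (iii): the affinity of $\Fut$ in the coefficient together with K-semistability at $t=0$ and $t=a$ does force $\Fut\equiv 0$ along the whole family, and then K-polystability of $V$ makes the underlying test configuration of $V$ a product. The paper argues exactly this way up to that point.

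The gap is the sentence ``Hence the original degeneration of $(V,a'S)$ is also a product, so $(V_0,a'S_0)\cong(V,a'S)$.'' Knowing that the test configuration of $V$ is a product only says $\mathcal{V}\cong V\times\bA^1$ with the $\bC^*$-action induced by a one-parameter subgroup $\lambda\subset\operatorname{Aut}(V)$. It does \emph{not} say that $\lambda$ preserves $S$: the central fibre of the boundary is $\mathcal{S}_0=\lim_{t\to 0}\lambda(t)\cdot S$, which may be a genuinely different (possibly singular) member of $|S|$. So $(\mathcal{V}_0,a'\mathcal{S}_0)\cong (V,a'\mathcal{S}_0)$ need not be isomorphic to $(V,a'S)$, and the test configuration of the \emph{pair} need not be a product. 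A tell-tale sign is that your argument never invokes the hypothesis that $S$ is K-polystable.

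The paper closes exactly this gap. After obtaining $\mathcal{V}_0\cong V$, it passes to the optimal coefficient $a=1-\tfrac{r}{n}$ and uses that $(V,aS)$ admits the K-polystable degeneration to the cone $(C_p(S,M),aS_\infty)$ with $S_\infty\cong S$ (this is where K-polystability of $S$ enters, via \cite[Proposition 2.11]{LZ19}). Uniqueness of the K-polystable degeneration \cite{BX19} and \cite[Section 3]{LWX18} then produce a two-step degeneration $S\rightsquigarrow\mathcal{S}_0\rightsquigarrow S_\infty\cong S$ of the boundary alone; K-polystability of $S$ forces $\mathcal{S}_0\cong S$. Only after this can one conclude that the original test configuration of the pair is of product type. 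Your write-up needs an analogous step controlling $\mathcal{S}_0$, not just $\mathcal{V}_0$.
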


\begin{proof}
It suffices to assume $a$ to be the optimal angle for K-stability ($a=1-\frac{r}{n}$), i.e. $(V,bS)$ is K-unstable for $a<b<1$. Fix a rational number $a'\in (0,a)$. We aim to show the pair $(V,a'S)$ is K-polystable. Since it's K-semistable, by \cite[Section 3]{LWX18} there is a test configuration $(\mV, a'\mS; \mL)\to \bA^1$ such that 
\begin{enumerate}
\item $\Fut(\mV,a'\mS; \mL)=0$,
\item $(\mV_0,a'\mS_0)$ is K-polystable. 
\end{enumerate}
By interpolation of Futaki invariants (see \cite[Proposition 2.13]{ADL19}) we know $\Fut(\mV,t\mS;\mL)=0$ for any $t\in [0,1)$, which implies $\mV_0\cong V$ since $V$ is K-polystable. 
As $\Fut(\mV,a\mS;\mL)=0$, by \cite[Section 3]{LWX18} we know $ (\mV_0,a\mS_0)$ is K-semistable. By the openness property of K-polystability, cf. \cite[Proporsition 3.18]{ADL19}, 
$(V,aS)$ is strictly K-semistable (i.e. K-semistable but not K-polystable), then there is a test configuration  $(\tilde{\mV},a\tilde{\mS};\tilde{\mL})$ of $(V,aS;-(K_V+aS))$ such that
\begin{enumerate}
\item $\Fut(\tilde{\mV},a\tilde{\mS};\tilde{\mL})=0$,
\item $(\tilde{\mV}_0, a\tilde{\mS}_0)$ is K-polystable.
\end{enumerate}
Since $S$ is K-polystable, by \cite[Proporsition 2.11]{LZ19}, $(C_p(S,M), aS_\infty)$ is a K-polystable degeneration of $(V,aS)$, where $C_p(S,M)$ is the projective cone over $S$ associated to $M:=S|_S$ and $S_\infty$ denotes the infinite divisor. By the uniqueness of K-polystable degeneration \cite{BX19} we know $(\tilde{\mV}_0, a\tilde{\mS}_0)\cong (C_p(S,M),aS_\infty)$.
By \cite[Section 3]{LWX18}, there is a test configuration which has vanishing generalized Futaki invariant and degenerates $(\mV_0,a\mS_0)$ to $(\tilde{\mV}_0,a\tilde{\mS}_0)$. 
That is, we first degenerate $(V,aS)$ to $(\mV_0\cong V, a\mS_0)$, then we degenerate $(\mV_0, a\mS_0)$ to $(\tilde{\mV}_0,a\tilde{\mS}_0)\cong (C_p(S,M), aS_\infty)$. Focusing on the boundary, we first degenerate $S$ to $\mS_0$, then degenerate $\mS_0$ to $S_\infty \cong S$. By K-polystability of $S$ we at once see $\mS_0\cong S$. 
Hence, the pair $(\mV_0, a'\mS_0)$, which is the K-polystable degeneration of $(V,a'S)$, is induced by a test configuration of product type. Thus the pair $(V,a'S)$ is K-polystable. The proof is finished.
\end{proof}

\begin{corollary}{\rm (=Corollary \ref{Pncase})}
For the pair $(\bP^n, S_d)$ where $S_d$ is a smooth hypersurface of degree $1\leq d\leq n$. If $S_d$ is K-polystable (this is expected to be true), then we have $E(\bP^n, S_d)=[0, 1-\frac{r}{n}]$, where $r=\frac{n+1-d}{d}$. In particular, $(\bP^n, aS_d)$ is K-polystable for any $a\in [0, 1-\frac{r}{n})$.
\end{corollary}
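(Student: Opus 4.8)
The plan is to read this off directly from Theorem \ref{thm:optimalangle} by specializing to $V=\bP^n$ and $S=S_d$; the only real work is checking that the hypotheses hold and matching the numerical data.

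First I would record the elementary geometry. Since $-K_{\bP^n}\sim_\bQ \mO_{\bP^n}(n+1)$ and $S_d\in|\mO_{\bP^n}(d)|$, we have $S_d\sim_\bQ -\lambda K_{\bP^n}$ with $\lambda=\frac{d}{n+1}$. The assumption $1\le d\le n$ gives $\frac{1}{n+1}\le\lambda\le\frac{n}{n+1}<1$, so the hypothesis $0<\lambda<1$ of Theorem \ref{thm:optimalangle} is met, and $r:=\frac{1}{\lambda}-1=\frac{n+1}{d}-1=\frac{n+1-d}{d}$ is exactly the value in the statement. Moreover $d\le n$ forces $-K_{S_d}=\mO_{\bP^n}(n+1-d)|_{S_d}$ to be ample, so $S_d$ is a smooth Fano hypersurface, and $M:=S_d|_{S_d}$ satisfies $M\sim_\bQ-\frac{1}{r}K_{S_d}$, so $S_d$ sits in the setting of the theorem.

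Next I would check the K-stability inputs: $\bP^n$ is K-polystable (it carries the Fubini--Study K\"ahler--Einstein metric), hence in particular K-semistable; and $S_d$ is K-polystable by assumption, hence K-semistable. Therefore Theorem \ref{thm:optimalangle} applies verbatim and yields $E(\bP^n,S_d)=[0,1-\frac{r}{n}]$ with $r=\frac{n+1-d}{d}$; note $1-\frac{r}{n}\ge 0$ because $\lambda\ge\frac{1}{n+1}$, in accordance with the remark in Theorem \ref{thm:optimalangle}. Finally, since $\bP^n$ and $S_d$ are both K-polystable, the last assertion of Theorem \ref{thm:optimalangle} gives that $(\bP^n,aS_d)$ is K-polystable for every rational $a\in[0,1-\frac{r}{n})$, which is the ``in particular'' clause.

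There is no genuine obstacle here: the entire content is contained in Theorem \ref{thm:optimalangle} (and ultimately in Theorem \ref{deltacone}), and the proof is a one-line substitution. The only point requiring (trivial) care is the arithmetic converting the pair $(d,n)$ into $(\lambda,r)$ and verifying that the range $1\le d\le n$ is precisely what makes $0<\lambda<1$ and $1-\frac{r}{n}\ge 0$ hold.
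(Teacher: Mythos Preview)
Your proposal is correct and follows exactly the paper's approach: the paper's proof is the single line ``Just replace $(\bP^n, S_d)$ by $(V,S)$, then apply Theorem \ref{thm:optimalangle},'' and you have simply spelled out the verification of the hypotheses and the matching of $\lambda=\frac{d}{n+1}$, $r=\frac{n+1-d}{d}$ in detail.
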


\begin{proof}
Just replace $(\bP^n, S_d)$ by $(V,S)$, then apply Theorem \ref{thm:optimalangle}.
\end{proof}

\begin{example}
The pair $(\bP^2, aC)$ is K-polystable for $a\in [0, \frac{3}{4})$, where $C$ is a smooth conic curve, see \cite[Theorem 1.5]{LS14}.
\end{example}

\section{Examples}

In this section, we will compute delta invariants of some special hypersurfaces in projective space, which can be realized as the projective cones over some lower dimensional log Fano pairs.

\subsection{Projective cones over smooth Fano hypersurfaces}
Let $V^0_d\subset\bP^{n+1}$ be a smooth hypersuface of degree $d$ defined by a homogeneous polynomial $f_d$, where $2\leq d\leq n+1$. Let $H^0$ denote the hyperplane section of $V^0_d$, then $-K_{V_d^0}\sim r_0 H^0$, where $r_0=n+2-d$. Consider the projective cone over $V^0_d$ associated to $H^0$, denoted by $V^1_d$, that is $V^1_d:=\Proj \oplus_{k\in \bN}\oplus_{l\in \bN} H^0(V_d^0, kH^0) s^l$. It is clear that $V_d^1$ is still a degree $d$ hypersuface but lies in $\bP^{n+2}$, and it is still cut out by equation $f_d=0$. Write $H^1 $ the hyperplane section of $V^1_d$, then we see $-K_{V_d^2}\sim r_1 H^1$, where $r_1=n+3-d=r_0+1$. Continue the process, we get hypersurfaces $(V_d^i, H^i)$ of degree $d$ in $\bP^{n+1+i}, i\geq 1$, which is still cut out by $f_d=0$, and $H^i$ is the hyperplane section with $-K_{V^i_d}\sim r_iH^i$, where $r_i=n+2+i-d=r_0+i$. Note that $V_d^i\subset \bP^{n+1+i}, i\in \bN$ are all $\bQ$-Fano varieties. We want to compute delta invariants of these Fano varieties. Let us first recall following result.

Let $V$ be a $\bQ$-Fano variety of dimension $n$, and $L$ an ample line bundle on $V$ such that $L\sim_\bQ -\frac{1}{r}K_V$ for some positive rational number $r>0$. Let $Y$ be the projective cone over $V$ associated to $L$, that is, $Y:=\Proj \oplus_{k\in \bN}\oplus_{l\in \bN} H^0(V, kL) s^l$, then by Theorem \ref{deltacone} we know,
$$\delta(Y)=\min\bigg\{\frac{(n+2)r}{(n+1)(r+1)}\delta(V),  \frac{(n+2)r}{(n+1)(r+1)}, \frac{n+2}{r+1}\bigg\}. $$
So if $\delta(V)\leq 1$ and $r\leq n+1$, then $\delta(Y)=\frac{(n+2)r}{(n+1)(r+1)}\delta(V)$. Assume $\delta(V_d^0)\leq 1$, then
\begin{align*}
&\delta(V_d^1)=\frac{(n+2)r_0}{(n+1)(r_0+1)}\delta(V_d^0)< 1,\\
&\delta(V_d^2)=\frac{(n+3)r_1}{(n+2)(r_1+1)}\delta(V_d^1)< 1,\\
&...\\
&\delta(V_d^i)=\frac{(n+1+i)r_{i-1}}{(n+i)(r_{i-1}+1)}\delta(V_d^{i-1})< 1.
\end{align*}
Thus we have following formula

\begin{align*}
\delta(V_d^i)=\frac{(n+1+i)r_0}{(n+1)r_i}\delta(V_d^0)=\frac{(n+2-d)(n+1+i)}{(n+1)(n+2+i-d)}\delta(V_d^0)<1.
\end{align*}
Similarly, if $\delta(V_d^0)\geq 1$, then $\delta(V_d^i)=\frac{(n+2-d)(n+1+i)}{(n+1)(n+2+i-d)}<1$. We have the following results.

\begin{corollary}
Let V be a smooth hypersurface of degree d in $\bP^{n+1}$, where $2\leq d\leq n+1$, and H the hyperplane section. Then the projective cone Y over $V$ associated to H is K-unstable. In other words, $\bQ$-Fano variety cut out by the form $f_d(x_0,x_1,..., x_{n+1})=0$ in $\bP^{n+2}$ is K-unstable for $2\leq d\leq n+1$, where $f_d$ is a homogeneous form of degree d which determines a smooth Fano manifold in $\bP^{n+1}$. Moreover, 
$$\delta(Y)=\min \bigg\{ \frac{(n+2)(n+2-d)}{(n+1)(n+3-d)}\delta(V),\frac{(n+2)(n+2-d)}{(n+1)(n+3-d)}\bigg\}<1.$$
\end{corollary}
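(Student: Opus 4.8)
The plan is to apply Theorem~\ref{deltacone} directly to $V$ with polarization $L=H$. First I would record the numerics. Since $V$ is a smooth degree-$d$ hypersurface in $\bP^{n+1}$, adjunction gives $-K_V\sim(n+2-d)H$, so $H\sim_\bQ-\frac1r K_V$ with $r=n+2-d$; the hypothesis $2\le d\le n+1$ forces $1\le r\le n$, in particular $r\le n+1$, and $V$ is a $\bQ$-Fano variety of dimension $n$ (so $Y$ is $\bQ$-Fano by Lemma~\ref{lemma 2.1}(5)). Hence Theorem~\ref{deltacone} applies with $c=0$ and gives
$$\delta(Y)=\min\left\{\frac{(n+2)r}{(n+1)(r+1)}\delta(V),\ \frac{(n+2)r}{(n+1)(r+1)},\ \frac{n+2}{r+1}\right\}.$$

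Next I would discard the third entry: because $r\le n+1$ we have $\frac{r}{n+1}\le1$, hence $\frac{(n+2)r}{(n+1)(r+1)}\le\frac{n+2}{r+1}$, so after substituting $r=n+2-d$, $r+1=n+3-d$,
$$\delta(Y)=\min\left\{\frac{(n+2)(n+2-d)}{(n+1)(n+3-d)}\,\delta(V),\ \frac{(n+2)(n+2-d)}{(n+1)(n+3-d)}\right\},$$
which is already the displayed formula. Then I would verify the strict inequality: a one-line computation gives $(n+2)(n+2-d)-(n+1)(n+3-d)=1-d<0$, i.e. $\frac{(n+2)(n+2-d)}{(n+1)(n+3-d)}-1=\frac{1-d}{(n+1)(n+3-d)}<0$ since $d\ge2$. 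Therefore both entries of the minimum are $<1$ (the first is at most the second), so $\delta(Y)<1$ and $Y$ is K-unstable by the criterion that $(Y)$ is K-semistable iff $\delta(Y)\ge1$. Finally, the reformulation about the hypersurface cut out by $f_d=0$ in $\bP^{n+2}$ follows from the identification, already recorded at the start of this section, of the projective cone $Y=C_p(V,H)$ with the degree-$d$ hypersurface in $\bP^{n+2}$ defined by the same polynomial $f_d$ (viewing $\bP^{n+1}\subset\bP^{n+2}$ as a hyperplane through the cone vertex).

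There is no real obstacle here: the entire content is packaged in Theorem~\ref{deltacone}, and what remains is the elementary comparison of the three terms in the minimum together with the sign computation $1-d<0$. The only point deserving a word of care is checking that the hypotheses of Theorem~\ref{deltacone} hold — namely that $H$ is ample and $\bQ$-proportional to $-K_V$ with constant $r\le n+1$ — all immediate from adjunction on a Fano hypersurface of degree $2\le d\le n+1$.
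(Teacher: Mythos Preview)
Your proof is correct and follows essentially the same route as the paper: the corollary is an immediate application of Theorem~\ref{deltacone} with $c=0$ and $r=n+2-d$, discarding the third term in the minimum because $r\le n+1$, exactly as in the discussion preceding the corollary. Your explicit verification $(n+2)(n+2-d)-(n+1)(n+3-d)=1-d<0$ is a nice clean way to see the strict inequality.
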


\begin{corollary}
Let V be a smooth hypersurface of degree $2\leq d\leq n+1$ in $\bP^{n+1}$, which is cut out by the form $f_d(x_0,x_1,...,x_{n+1})$, then for any positive natural number l, the variety $Y\subset \bP^{n+1+l}$ defined by $f_d(x_0,x_1,...,x_{n+1})$ is K-unstable. Moreover, 
$$\delta(Y)=\min\bigg\{\frac{(n+2-d)(n+1+l)}{(n+1)(n+2+l-d)}\delta(V), \frac{(n+2-d)(n+1+l)}{(n+1)(n+2+l-d)} \bigg\} <1.$$
\end{corollary}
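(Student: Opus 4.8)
The plan is to view $Y$ as the final stage $V_d^l$ of the tower of iterated projective cones $V=V_d^0,V_d^1,\dots,V_d^l=Y$ set up above, and to run Theorem \ref{deltacone} up this tower by induction on $i$. Recall that $V_d^i$ is the projective cone over $V_d^{i-1}$ with respect to the hyperplane section $H^{i-1}$, that $\dim V_d^i=n+i$, and that $-K_{V_d^i}\sim r_iH^i$ with $r_i=n+2+i-d$, so $H^{i-1}\sim_\bQ-\tfrac{1}{r_{i-1}}K_{V_d^{i-1}}$ is ample with $r_{i-1}=n+i+1-d>0$. Since $V_d^0$ is smooth, hence klt, Theorem \ref{conesing}(1) together with Lemma \ref{lemma 2.1}(5) shows inductively that every $V_d^i$ is a $\bQ$-Fano variety, so Theorem \ref{deltacone} (with $c=0$) applies at each stage and gives
$$\delta(V_d^i)=\min\bigg\{\frac{(n+i+1)r_{i-1}}{(n+i)(r_{i-1}+1)}\delta(V_d^{i-1}),\;\frac{(n+i+1)r_{i-1}}{(n+i)(r_{i-1}+1)},\;\frac{n+i+1}{r_{i-1}+1}\bigg\}.$$

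First I would show the last term never attains this minimum. Because $d\ge 2$ we have $r_{i-1}=n+i+1-d\le n+i-1<n+i$, and since $r\mapsto\frac{(n+i+1)r}{(n+i)(r+1)}$ is increasing with value $1$ at $r=n+i$, this forces $\frac{(n+i+1)r_{i-1}}{(n+i)(r_{i-1}+1)}<1<\frac{n+i+1}{r_{i-1}+1}$. Thus the third term can be dropped, giving $\delta(V_d^i)=\frac{(n+i+1)r_{i-1}}{(n+i)(r_{i-1}+1)}\min\{\delta(V_d^{i-1}),1\}$; in particular $\delta(V_d^i)<1$ for every $i\ge 1$. Consequently, for $i\ge 2$ the inner minimum equals $\delta(V_d^{i-1})$, so I obtain the clean recursion $\delta(V_d^i)=\frac{(n+i+1)r_{i-1}}{(n+i)(r_{i-1}+1)}\delta(V_d^{i-1})$ with base case $\delta(V_d^1)=\frac{(n+2)r_0}{(n+1)(r_0+1)}\min\{\delta(V),1\}$.

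Next I would telescope the product $\prod_{i=1}^{l}\frac{(n+i+1)r_{i-1}}{(n+i)(r_{i-1}+1)}$: the factors $\frac{n+i+1}{n+i}$ collapse to $\frac{n+l+1}{n+1}$, and since $r_{i-1}+1=r_i$ the factors $\frac{r_{i-1}}{r_{i-1}+1}$ collapse to $\frac{r_0}{r_l}$. Substituting $r_0=n+2-d$ and $r_l=n+2+l-d$ then yields
$$\delta(Y)=\delta(V_d^l)=\frac{(n+l+1)(n+2-d)}{(n+1)(n+2+l-d)}\min\{\delta(V),1\},$$
which is the asserted formula, rewritten as a minimum of two terms. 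Finally, the identity $(n+1)(n+2+l-d)-(n+l+1)(n+2-d)=l(d-1)>0$ shows the leading coefficient is strictly less than $1$, and since $\min\{\delta(V),1\}\le 1$ we conclude $\delta(Y)<1$, i.e.\ $Y$ is K-unstable.

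I do not expect a genuine obstacle: Theorem \ref{deltacone} holds for all $r>0$, so the induction never runs into a failing hypothesis, and everything else is bookkeeping and telescoping. The one point that needs care is checking, at each stage, that the third term of Theorem \ref{deltacone}'s $\delta$-formula is not the minimum --- which is exactly where the assumption $d\ge 2$ is used. The preceding corollary is recovered as the special case $l=1$.
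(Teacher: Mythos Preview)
Your proposal is correct and follows essentially the same route as the paper: the corollary is an immediate consequence of the discussion preceding it, where Theorem \ref{deltacone} is applied iteratively up the tower $V_d^0,\dots,V_d^l$ and the resulting product is telescoped. Your write-up is in fact more careful than the paper's, since you explicitly verify that the third term $\frac{n+i+1}{r_{i-1}+1}$ in Theorem \ref{deltacone} is never the minimum (using $d\ge 2$) and that each $V_d^i$ is $\bQ$-Fano, whereas the paper only records the recursion and the telescoped answer.
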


\begin{example}
Let $V$ be a smooth conic curve in $\bP^2$, then $\delta(V)=1$. Let $Y$ be the projective cone over $V$, then $Y$ is isomorphic to a quadratic surface defined by $x_0^2+x_1^2+x_2^2=0$ in $\bP^3$. In this case, $n=1, d=2$, so $\delta(Y)=\frac{3}{4}$. We know that a smooth quadratic surface in $\bP^3$ is isomorphic to $\bP^1\times \bP^1$, which is K-polystable and the delta invariant is 1.
\end{example}

\begin{example}
Let $V$ be a smooth cubic surface in $\bP^3$, then $\delta(V)>1$. Let $Y$ be the projective cone over $V$, then $Y$ is isomorphic to a cubic 3-fold defined by the same $f_3(x_0,x_1,x_2,x_3)=0$ in $\bP^4$. In this case, $n=2, d=3$, so $\delta(Y)=\frac{2}{3}$. We know that a smooth cubic 3-fold in $\bP^4$ is K-stable, see \cite{LX19}.
\end{example}

\begin{question}
The delta invariants of smooth cubic surfaces has been estimated in \cite{PW18,CZ19}. It is an interesting question to estimate delta invariants of smooth cubic 3-folds and quartic 3-folds.
\end{question}

\subsection{Projective cones associated to ample $\bQ$-line bundles}

In the whole previous contents, the cones are associated to ample line bundles. We now deal with the case of ample $\bQ$-line bundles. Let $V$ be a $\bQ$-Gorenstein projective normal variety of dimension $n$ and $L$ an ample $\bQ$-line bundle on $V$. We will view $L$ to be a $\bQ$-Cartier divisor and write $\Delta_L$ to be the branched divisor associated to $L$, that is, we construct a finite morphism $f: \tilde{V}\to V$ such that $K_{\tilde{V}}=f^*(K_V+\Delta_L)$ and $f^*L$ is an ample line bundle on $\tV$. We consider the following cone over $\tV$ associated to $f^*L$. i.e.
$$Y:= \Proj\oplus_{k\in \bN}\oplus_{l\in \bN}H^0(\tV,kf^*L)s^l.$$
We assume $Y$ admits klt singularities, then $f^*L$ is proportional to $-K_{\tV}$, see \cite{Kollar13} or results in Section \ref{preliminary}, that is $\tV$ is a $\bQ$-Fano variety. Hence $(V,\Delta_L) $ is also a log Fano pair and $L$ is proportional to $-(K_V+\Delta_L)$. We assume $f^*L\sim_\bQ -\frac{1}{r}K_{\tV}$ for some positive rational $r$, then we have $L\sim_\bQ -\frac{1}{r}(K_V+\Delta_L)$. By Theorem \ref{deltacone},
$$\delta(Y)=\min \bigg\{\frac{(n+2)r}{(n+1)(r+1)}\delta(\tV),  \frac{(n+2)r}{(n+1)(r+1)}, \frac{n+2}{r+1}\bigg\} .$$ 
Combine the recent work \cite[Theorem 1.2]{LZhu20} (See also \cite{Der16b}), we have $\delta(\tV)\geq 1$ if and only if $\delta(V,\Delta_L)\geq 1$, and $\delta(\tilde{V})=\delta(V,\Delta_L)$ if either one is K-unstable. Thus we conclude the following result.

\begin{corollary}\label{qdeltabundle}
 Let $V$ be a $\bQ$-Gorenstein projective normal variety of dimension $n$ and $L$ an ample $\bQ$-line bundle. Suppose $L\sim_\bQ -\frac{1}{r}(K_V+\Delta_L)$. Write $Y:= \Proj\oplus_{k\in \bN}\oplus_{l\in \bN}H^0(V,\lfloor kL\rfloor)s^l$, then we have 
 $$\delta(Y)=\min \bigg\{\frac{(n+2)r}{(n+1)(r+1)}\delta(V,\Delta_L),  \frac{(n+2)r}{(n+1)(r+1)}, \frac{n+2}{r+1}\bigg\}. $$
\end{corollary}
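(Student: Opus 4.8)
The plan is to reduce the statement to the case of a genuine ample line bundle, which is Theorem \ref{deltacone}, by passing to the cyclic branched cover that kills the fractional part of $L$. Recall the construction preceding the corollary: since $L$ is an ample $\bQ$-line bundle one produces a finite surjective morphism $f\colon\tV\to V$ from a normal projective variety with $f^*L$ an honest ample line bundle and $K_{\tV}=f^*(K_V+\Delta_L)$, $\Delta_L$ the branch divisor. Pulling back $L\sim_\bQ-\tfrac1r(K_V+\Delta_L)$ and invoking this crepancy relation gives $f^*L\sim_\bQ-\tfrac1r K_{\tV}$; in particular $\tV$ is a $\bQ$-Fano variety, and by Theorem \ref{conesing} the honest cone $\tY:=\Proj\oplus_{k,l\in\bN}H^0(\tV,kf^*L)s^l$ over $(\tV,f^*L)$ is a klt $\bQ$-Fano variety. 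Applying Theorem \ref{deltacone} to $(\tV,f^*L)$ with $c=0$ then yields
$$\delta(\tY)=\min\bigg\{\frac{(n+2)r}{(n+1)(r+1)}\delta(\tV),\ \frac{(n+2)r}{(n+1)(r+1)},\ \frac{n+2}{r+1}\bigg\}.$$

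There are then two translations to carry out. First, one must match the two cone constructions: the $\bQ$-line-bundle cone $Y=\Proj\oplus_{k,l}H^0(V,\lfloor kL\rfloor)s^l$ over $V$ is obtained from $\tY$ by the quotient for the Galois group $G$ of $f$ (the invariant subring of $\oplus_k H^0(\tV,kf^*L)$ being $\oplus_k H^0(V,\lfloor kL\rfloor)$), so there is a finite morphism $g\colon\tY\to Y$; using Kollár's description of cones over $\bQ$-polarized pairs \cite{Kollar13} one checks that $g$ is compatible with the boundary extensions in the crepant sense, and hence that $\delta$ is unchanged — explicitly $\delta(\tY)=\delta(Y)$ when $Y$ is K-unstable and $\delta(\tY)\geq1\iff\delta(Y)\geq1$ — by the finite degree formula. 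Second, at the level of $V$ one uses \cite[Theorem 1.2]{LZhu20} (see also \cite{Der16b}): $\delta(\tV)\geq1\iff\delta(V,\Delta_L)\geq1$, and $\delta(\tV)=\delta(V,\Delta_L)$ whenever either pair is K-unstable.

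With these in hand the corollary follows by the usual dichotomy. If $\delta(V,\Delta_L)\geq1$, then $\delta(\tV)\geq1$, so in the displayed formula the term $\tfrac{(n+2)r}{(n+1)(r+1)}\delta(\tV)$ dominates $\tfrac{(n+2)r}{(n+1)(r+1)}$ and the minimum over $\tV$ equals the minimum over $(V,\Delta_L)$; combined with $\delta(\tY)=\delta(Y)$ (valid in this range since both sides are $\geq$ the same value) one gets the asserted formula. If $\delta(V,\Delta_L)<1$, then $\delta(\tV)=\delta(V,\Delta_L)$ and $\delta(Y)=\delta(\tY)$, and one substitutes directly. I expect the only genuine difficulty to be the first translation, namely making rigorous the comparison between the $\bQ$-line-bundle cone $Y$ over $V$ and the honest cone $\tY$ over $\tV$ — i.e.\ checking that the round-downs $\lfloor kL\rfloor$ absorb the fractional part of $L$ in exactly the way needed for the finite cover $g\colon\tY\to Y$ to preserve $\delta$; everything after that is bookkeeping with the formula of Theorem \ref{deltacone} and the finite-degree invariance of $\delta$.
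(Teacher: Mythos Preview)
Your overall strategy matches the paper's argument (which is just the discussion immediately preceding the corollary): pass to the cover $\tV$ so that $f^*L$ becomes an honest line bundle, apply Theorem~\ref{deltacone} to the cone over $\tV$, and then invoke \cite[Theorem 1.2]{LZhu20} to convert $\delta(\tV)$ into $\delta(V,\Delta_L)$. The dichotomy you run at the end is exactly how the paper concludes.

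Where you go beyond the paper is in your ``first translation'', and this is where there is a slip. The claim that the $G$-invariant subring of $\bigoplus_k H^0(\tV,kf^*L)$ equals $\bigoplus_k H^0(V,\lfloor kL\rfloor)$ is not correct. Take $V=\bP^1$, $L=\tfrac12(p+q)$, and $f$ the double cover branched at $p+q$. In degree $1$ the $G$-invariants (for the natural linearization) are $H^0(V,\mO_V(1))$, which is two-dimensional, whereas $H^0(V,\lfloor L\rfloor)=H^0(\bP^1,\mO)$ is one-dimensional. Concretely, the affine cone $\Spec\bigoplus_k H^0(\tV,kf^*L)$ is the $A_1$ singularity while $\Spec\bigoplus_k H^0(V,\lfloor kL\rfloor)$ is the $A_3$ singularity, so the two projective cones are genuinely different varieties and are not related by the $G$-quotient $g\colon\tY\to Y$ you describe.

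To be fair, the paper does not address this step at all: in the paragraph before the corollary $Y$ is \emph{defined} as the cone over $\tV$, and in the corollary it is silently rewritten with the floor notation; every worked example that follows uses the cone over $\tV$. So the honest reading is that the corollary is really about that cone, and the floor expression is an informal shorthand rather than a second construction to be matched. Your instinct that this point deserves care is correct; it just is not resolved by the invariant-subring identification you propose.
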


\begin{example}
This example is taken from \cite{LZ19}. Let $V=\bP^n$ and $L=\frac{n+1}{n+2}S_{n+1}-nH$, where $S_{n+1}$ is a general hypersurface of degree $n+1$ in $\bP^n$ and $H$ is a hyperplane. Then $\Delta_L=\frac{n+1}{n+2}S_{n+1}$, $L\sim_\bQ \frac{1}{n+2}H$, $-(K_V+\Delta_L)\sim_\bQ \frac{n+1}{n+2}H$. Thus $r=n+1$, $\delta(V,\Delta_L)\geq 1$.
So by Theorem \ref{qdeltabundle}, $\delta(Y)=1$. In fact, $Y$ is isomorphic to a hypersurface in $\bP^{n+2}$ determined by $x_{n+1}^{n+2}=g_{n+1}x_{n+2}$, where $g_{n+1}(x_0,x_1,...,x_n)$ is the equation of $S_{n+1}\subset \bP^n$. We will deal with such kind of hypersurfaces in detail.
\end{example}

From now on, we fix $V:=\bP^n$ and $S_d\subset \bP^n$ is a smooth hypersurface of degree $d$ determined by a homogeneous form $g_d(x_0,x_1,...,x_n)$. Then we can construct a cover morphism $f: \tV\to V$, which is ramified along $S_d$ with multiplicity $k$. It is clear that we can choose $\tV$ to be a hypersurface in the weighted projective space $\bP^{n+1}(k,k,...,k,d)$ determined by $x_{n+1}^k=g_d(x_0,x_1,...,x_n)$. Suppose $H$ is the hyperplane class in $\bP^n$ and denote $L:=\frac{l}{k}S_d-\frac{dl-1}{k}H\sim_\bQ \frac{1}{k}H$, where $l$ is a positive natural number such that $l<k, (k,l)=1$ and $k| dl-1$. It is not hard to see the affine cone over $\tV$ associated to $f^*L$ is exactly the hypersurface in $\bC^{n+2}$ determined by $x_{n+1}^k=g_d(x_0,x_1,...,x_n)$, thus the corresponding projective cone is exactly the hypersurface in $\bP^{n+2}$ determined by $x_{n+1}^k=g_d\cdot x_{n+2}^{k-d}$, denoted by $Y$.
In this case, $\Delta_L=\frac{k-1}{k}S_d$, so $-(K_V+\Delta_L)\sim_\bQ (n+1-\frac{(k-1)d}{k})H\sim_\bQ ((n+1)k-(k-1)d)L$. We choose $k,d$ to satisfy that $(n+1)k-(k-1)d>0$. By Corollary \ref{qdeltabundle}, we have 
$$\delta(Y)=\min \bigg\{\frac{(n+2)r}{(n+1)(r+1)}\delta(V,\Delta_L),\frac{(n+2)r}{(n+1)(r+1)}, \frac{n+2}{r+1} \bigg\}, $$
where $r= (n+1)k-(k-1)d$. We have the following lemma about $\delta(V,\Delta_L)$.

\begin{lemma}\label{bigd}
Let $V:=\bP^n$ and $S_d\subset \bP^n$ be a smooth hypersurface of degree $d\geq n+1$, then $(V, aS_d)$ is K-semistable for any rational $0\leq a<\frac{n+1}{d}$, i.e. $\delta(V,aS_d)\geq 1$.
\end{lemma}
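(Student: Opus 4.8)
The plan is to read Lemma \ref{bigd} off from Theorem \ref{geq1}. First I would record the numerics on $\bP^n$: with $H$ the hyperplane class one has $-K_{\bP^n}\sim(n+1)H$ and $S_d\sim dH$, so
$$
S_d\sim_\bQ-\lambda K_{\bP^n},\qquad\lambda:=\frac{d}{n+1}.
$$
Because $d\geq n+1$, the number $\lambda$ is a positive rational with $\lambda\geq1$, and $1/\lambda=(n+1)/d$. The remaining hypotheses of Theorem \ref{geq1} are immediate: $\bP^n$ is a smooth Fano manifold and is K-polystable, hence K-semistable (equivalently $\delta(\bP^n)\geq1$), and $S_d$ is smooth by assumption.

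Next I would simply invoke Theorem \ref{geq1} with this $\lambda$: it yields that $(\bP^n,aS_d)$ is K-semistable --- indeed K-stable once $a>0$ --- for every rational $a\in[0,1/\lambda)=[0,(n+1)/d)$. By the equivalence ``K-semistable $\Leftrightarrow\delta\geq1$'' recalled in Section \ref{preliminary}, this is precisely the assertion $\delta(\bP^n,aS_d)\geq1$ for $0\leq a<(n+1)/d$, which is the content of the lemma.

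Since Theorem \ref{geq1} is already available, I do not expect any genuine obstacle; the two points deserving a moment's care are that the range $[0,(n+1)/d)$ appearing in the lemma coincides with the range $[0,1/\lambda)$ produced by Theorem \ref{geq1}, and that on this range $-(K_{\bP^n}+aS_d)\sim_\bQ(n+1-ad)H$ is ample, so that $(\bP^n,aS_d)$ is a bona fide log Fano pair and $\delta$ is defined. If one preferred a self-contained argument instead of citing Theorem \ref{geq1}, the key input is that $(\bP^n,S_d)$ is log canonical (true because $S_d$ is smooth), hence $\ord_E(S_d)\leq A_{\bP^n}(E)$ for every prime divisor $E$ over $\bP^n$; combining this with $-(K_{\bP^n}+aS_d)=(1-\lambda a)(-K_{\bP^n})$ and $\delta(\bP^n)\geq1$ gives
$$
\frac{A_{\bP^n,aS_d}(E)}{S_{\bP^n,aS_d}(E)}=\frac{A_{\bP^n}(E)-a\,\ord_E(S_d)}{(1-\lambda a)\,S_{\bP^n}(E)}\geq\frac{(1-a)A_{\bP^n}(E)}{(1-\lambda a)\,S_{\bP^n}(E)}\geq\frac{1-a}{1-\lambda a}\geq1,
$$
so that $\delta(\bP^n,aS_d)\geq1$ for all rational $0\leq a<(n+1)/d$, as claimed.
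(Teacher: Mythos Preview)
Your proposal is correct and takes exactly the same approach as the paper: the paper's entire proof is the one line ``Apply Theorem \ref{geq1},'' and your primary argument is precisely this, with the numerics $\lambda=d/(n+1)\geq1$ spelled out. Your alternative self-contained computation is also valid (it essentially reproduces the $\lambda>1$ case of the proof of Theorem \ref{geq1}, and happens to give K-semistability in the $\lambda=1$ case as well), but it is extra rather than different.
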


\begin{proof}
Apply Theorem \ref{geq1}.
\end{proof}

Above all, we have following result.

\begin{corollary}\label{deltahyper}
Let $Y\subset \bP^{n+2}$ be a hypersurface determined by 
$$x_{n+1}^k\cdot x_{n+2}^{d-k}=g_d(x_0,x_1,...,x_n),$$
where $g_d$ determines  a smooth hypersurface in $\bP^n$ of degree $n+1\leq d\leq n+2$. Suppose there is a positive natural number $l$ such that $l<k, (k,l)=1, k|dl-1$ and $(n+1)k-(k-1)d>0$, then 
$$\delta(Y)=\min \bigg\{\frac{(n+2)r}{(n+1)(r+1)}, \frac{n+2}{r+1} \bigg\}=\frac{(n+2)r}{(n+1)(r+1)}\leq 1 ,$$
where $r= (n+1)k-(k-1)d\leq  n+1$.
\end{corollary}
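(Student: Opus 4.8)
The plan is to apply Corollary \ref{qdeltabundle} to the pair $(V,\Delta_L)=(\bP^n,\frac{k-1}{k}S_d)$ and then simplify the resulting minimum. First I would record the numerology: by construction $Y$ is the projective cone over $\tV$ associated to $f^*L$, where $f:\tV\to\bP^n$ is the degree-$k$ cyclic cover branched along $S_d$, and $L\sim_\bQ\frac1k H$ with $\Delta_L=\frac{k-1}{k}S_d$. A direct computation gives $-(K_V+\Delta_L)\sim_\bQ\bigl(n+1-\frac{(k-1)d}{k}\bigr)H$, and since $L\sim_\bQ\frac1kH$ this equals $rL$ with $r=(n+1)k-(k-1)d$, which is positive by the standing hypothesis. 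So Corollary \ref{qdeltabundle} applies verbatim and yields
$$
\delta(Y)=\min\bigg\{\frac{(n+2)r}{(n+1)(r+1)}\delta(V,\Delta_L),\ \frac{(n+2)r}{(n+1)(r+1)},\ \frac{n+2}{r+1}\bigg\}.
$$

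Next I would dispose of the factor $\delta(V,\Delta_L)=\delta(\bP^n,\frac{k-1}{k}S_d)$. Writing $a=\frac{k-1}{k}<1\le\frac{n+1}{d}$ (the last inequality because $d\le n+1$, with the boundary case $d=n+1$ forcing $a<1=\frac{n+1}{d}$, and $d=n+2$ excluded here since then $\frac{n+1}{d}<1$ could fail — but note the hypothesis $d\ge n+1$ together with $a<1$ still gives $a<\frac{n+1}{d}$ only when $d\le n+1$; for $d=n+2$ one checks $(n+1)k-(k-1)(n+2)>0$ forces $k<n+2$, hence $a=\frac{k-1}{k}<\frac{n+1}{n+2}=\frac{n+1}{d}$). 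Either way $a<\frac{n+1}{d}$, so Lemma \ref{bigd} gives $\delta(V,\Delta_L)\ge 1$. Consequently $\frac{(n+2)r}{(n+1)(r+1)}\delta(V,\Delta_L)\ge\frac{(n+2)r}{(n+1)(r+1)}$, and the first term never strictly contributes to the minimum, reducing the formula to
$$
\delta(Y)=\min\bigg\{\frac{(n+2)r}{(n+1)(r+1)},\ \frac{n+2}{r+1}\bigg\}.
$$

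It remains to compare these two quantities and to check $r\le n+1$. Both have common denominator $(n+1)(r+1)$, so the comparison is between $(n+2)r$ and $(n+1)(n+2)$, i.e.\ between $r$ and $n+1$. From $r=(n+1)k-(k-1)d$ and $d\ge n+1$ we get $r=(n+1)+(k-1)(n+1)-(k-1)d=(n+1)-(k-1)(d-(n+1))\le n+1$, since $k\ge 2$ and $d\ge n+1$. Hence $r\le n+1$, so $(n+2)r\le (n+1)(n+2)$, giving $\frac{(n+2)r}{(n+1)(r+1)}\le\frac{n+2}{r+1}$; the minimum is the first term, and it is $\le 1$ precisely because $r\le n+1$. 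This yields the displayed formula $\delta(Y)=\frac{(n+2)r}{(n+1)(r+1)}\le 1$, completing the proof.

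The only genuinely delicate point is the bookkeeping around the edge case $d=n+2$: one must verify that the arithmetic constraints $(k,l)=1$, $k\mid dl-1$, $l<k$, and $(n+1)k-(k-1)d>0$ are consistent and that they force $k$ small enough for Lemma \ref{bigd} to apply; everything else is routine algebra with the binomial-free formula from Corollary \ref{qdeltabundle}.
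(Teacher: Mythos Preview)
Your proof is correct and follows precisely the route the paper intends: apply Corollary \ref{qdeltabundle} to $(\bP^n,\tfrac{k-1}{k}S_d)$, invoke Lemma \ref{bigd} to get $\delta(V,\Delta_L)\ge 1$, and then use $d\ge n+1$ to deduce $r\le n+1$, which simultaneously orders the two remaining terms and gives the bound $\le 1$. The paper leaves the verification that $a=\tfrac{k-1}{k}<\tfrac{n+1}{d}$ (especially in the $d=n+2$ case, where the positivity hypothesis $(n+1)k-(k-1)d>0$ forces $k<n+2$) and the inequality $r\le n+1$ implicit, so your write-up is in fact more complete than the paper's own treatment.
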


\begin{example}
Let $k=2, d=n+1$, and $n$ is even. Then  the hypersurface $Y\subset \bP^{n+2}$  determined by $x_{n+1}^2x_{n+2}^{n-1}=g_{n+1}(x_0,x_1,...,x_n)$ is K-semistable, where $g_{n+1}$ determines a general hypersurface of degree $n+1$ in $\bP^n$.
For example, if we take $n=2$, then the 3-dimensional hypersurface $x_3^2x_4=g_3(x_0,x_1,x_2)$ is K-semistable, where $g_3$ cuts out a smooth elliptic curve in $\bP^2$.
\end{example}

\begin{example}
Let $k=3, d=n+1$, and $3| n$ or $3| 2n+1$. Then  the hypersurface $Y\subset \bP^{n+2}$  determined by $x_{n+1}^3x_{n+2}^{n-2}=g_{n+1}(x_0,x_1,...,x_n)$ is K-semistable, where $g_{n+1}$ determines a general hypersurface of degree $n+1$ in $\bP^n$.
For example, take $n=1$, then the surface $x_2^3=g_2(x_0,x_1)x_3$ is K-semistable, where $g_2$ cuts out two different points in $\bP^1$.
\end{example}

\begin{example}
Let $k=2, d=n+2$, and $n$ is odd. Then  the hypersurface $Y\subset \bP^{n+2}$  determined by $x_{n+1}^2x_{n+2}^n=g_{n+2}(x_0,x_1,...,x_n)$ is K-unstable with $\delta(Y)=\frac{(n+2)n}{(n+1)^2}<1$, where $g_{n+2}$ determines a general hypersurface of degree $n+2$ in $\bP^n$. For example, take $n=1$, then the surface  $Y\subset \bP^3$ determined by $x_2^2x_3=g_3(x_0,x_1)$ is K-unstable with $\delta(Y)=\frac{3}{4}$, where $g_3$ cuts out 3 different points in $\bP^1$.
\end{example}

\begin{appendix}

\section{Calabi symmetry and twisted K\"ahler--Einstein edge metrics}\label{analyticpart}
\label{sec:calabi}
In this appendix we prove a weaker version of Theorem \ref{thm:delta-Y-lower-upper-bound} using purely analytic methods. More precisely, we will show
\begin{proposition}
\label{prop:weak-main-thm}
In the setting of Theorem \ref{thm:delta-Y-lower-upper-bound}, we have
\begin{equation*}
\min\bigg\{\frac{\delta(V)r\beta_0}{1+\beta_0(r-1)},\beta_0\bigg\}\leq\delta(\tilde{Y})\leq\beta_0.
\end{equation*}
\end{proposition}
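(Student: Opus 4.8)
The plan is to deduce Proposition \ref{prop:weak-main-thm} from the identity $\beta(\tY)=\min\{1,\delta(\tY)\}$ of \eqref{eq:beta-X=min-1-delta} together with $\beta_0<1$ from \eqref{eq:beta_0<1}. Indeed, once we know $\delta(\tY)\le\beta_0<1$ we have $\delta(\tY)=\beta(\tY)$, so it suffices to prove
\[
\min\Bigl\{\tfrac{\delta(V)r\beta_0}{1+\beta_0(r-1)},\,\beta_0\Bigr\}\ \le\ \beta(\tY)\ \le\ \beta_0 .
\]
Both inequalities will be established through Calabi's momentum construction adapted to the natural $\bC^*$-action on $\tY$ in the fibre direction of $p\colon\tY\to V$, the point being that this torus symmetry reduces the relevant curvature conditions to ODEs.

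For the upper bound: if $\Ric(\omega)\ge\beta\omega$ for some K\"ahler $\omega\in 2\pi c_1(\tY)$, then averaging over the maximal compact subgroup $S^1\subset\bC^*$ we may assume $\omega$ is $\bC^*$-invariant, hence of momentum type. Its fibrewise moment map $\tau$ has image an interval whose endpoints are attained along $V_0$ and $V_\infty$; the horizontal part of $\omega$ at level $\tau$ represents $\kappa(\tau)\cdot 2\pi c_1(L)$ with $\kappa$ linear in $\tau$, interpolating between the values $r-1$ (on $V_0$) and $r+1$ (on $V_\infty$); and $\omega^{n+1}$ is proportional to $\kappa(\tau)^n\,d\tau\wedge(\text{fibre area})\wedge p^*\omega_V^n$. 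Pairing $\Ric(\omega)-\beta\omega$ with the Hamiltonian of the $\bC^*$-action and integrating over $\tY$ then produces the fibrewise Futaki obstruction, which forces $\beta\bigl(\bar\kappa-(r-1)\bigr)\le 1$, where $\bar\kappa:=\frac{n+1}{n+2}\cdot\frac{(r+1)^{n+2}-(r-1)^{n+2}}{(r+1)^{n+1}-(r-1)^{n+1}}$ is the $\kappa^n$-weighted mean of $\kappa(\tau)$; hence $\beta\le\beta_0$. (Equivalently this is the valuative bound $\beta(\tY)\le A_{\tY}(V_0)/S_{\tY}(V_0)=\beta_0$, the $S$-invariant of $V_0$ being computed in Lemma \ref{delta0bundle} with $a=b=0$, $A=r-1$, $B=r+1$.) Thus $\beta(\tY)\le\beta_0<1$ and therefore $\delta(\tY)=\beta(\tY)\le\beta_0$.

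For the lower bound, fix $c<\min\{\tfrac{\delta(V)r\beta_0}{1+\beta_0(r-1)},\beta_0\}$; by $\beta(V)=\min\{1,\delta(V)\}$ we may choose a K\"ahler form $\omega_V\in 2\pi c_1(L)$ with $\Ric(\omega_V)\ge c'\omega_V$ for a constant $c'$ taken close to $r\min\{1,\delta(V)\}$. Feeding $\omega_V$ into the momentum construction (as in \cite{Sze11,Li11,LL19}) one builds a $\bC^*$-invariant $\tilde\omega\in 2\pi c_1(\tY)$ from a momentum profile $\Phi$ on the moment interval with $\Phi>0$ in the interior, vanishing at the endpoints and with the prescribed boundary slopes there (these make $\tilde\omega$ extend smoothly across $V_0$ and $V_\infty$; varying the boundary data at the sections instead yields the twisted conical K\"ahler--Einstein edge metrics referred to in the section title). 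A direct computation shows $\Ric(\tilde\omega)$ is again of momentum type, with horizontal part $\Ric(\omega_V)-\ell(\tau)\,\omega_V$ for an explicit function $\ell$, so that $\Ric(\tilde\omega)\ge c\,\tilde\omega$ splits into: (i) a base condition $\Ric(\omega_V)\ge\bigl(c\,\kappa(\tau)+\ell(\tau)\bigr)\omega_V$ for all $\tau$; and (ii) an ODE inequality for $\Phi$ along the fibre. Solving (ii) explicitly, the endpoint conditions together with positivity of $\Phi$ are compatible precisely when $c<\beta_0$; and letting $c'\uparrow r\min\{1,\delta(V)\}$, condition (i)---whose extremal point is the $V_0$-end of the moment interval---holds precisely when $c<\tfrac{r\delta(V)\beta_0}{1+\beta_0(r-1)}$ (vacuously so when this exceeds $\beta_0$). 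Hence for every $c$ below the stated minimum we obtain $\tilde\omega\in 2\pi c_1(\tY)$ with $\Ric(\tilde\omega)\ge c\,\tilde\omega$, so $\beta(\tY)\ge\min\{\tfrac{\delta(V)r\beta_0}{1+\beta_0(r-1)},\beta_0\}$, and the Proposition follows.

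The main obstacle is the third step: carrying out the momentum computation of $\Ric(\tilde\omega)$ and analysing the resulting ODE, and in particular verifying that the two a priori separate constraints---fibrewise balancing along the $\bP^1$-fibres, and horizontal positivity over the base $V$---are governed by the two thresholds $\beta_0$ and $\tfrac{r\delta(V)\beta_0}{1+\beta_0(r-1)}$, with the base constraint becoming binding exactly when $V$ is far from being K-semistable. One must also check that the chosen profile $\Phi$ genuinely yields a smooth metric on $\tY$, and handle the K-semistable base case $\delta(V)\ge1$ separately (there one simply uses $\beta(V)=1$, and the minimum equals $\beta_0$).
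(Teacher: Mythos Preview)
Your overall strategy matches the paper's: bound $\beta(\tilde Y)$ above via a Futaki-type obstruction and below via the Calabi momentum construction, then convert to $\delta(\tilde Y)$ using \eqref{eq:beta-X=min-1-delta} and \eqref{eq:beta_0<1}. For the upper bound, your parenthetical valuative argument via Lemma~\ref{delta0bundle} is correct and suffices; the averaging sentence preceding it is not literally right (an $S^1$-average does not preserve a Ricci lower bound, and $S^1$-invariance alone does not force a metric to be of Calabi type), but this is harmless since the Futaki obstruction is metric-independent and that is all one needs (cf.\ Lemma~\ref{lem:V_0-compute-beta_0}).

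The lower bound, however, has a genuine gap. You insist on a \emph{smooth} $\tilde\omega$ (boundary slopes $\Phi'=\pm 1$) and then assert that the horizontal constraint~(i) at the $V_0$-end reads $c<\tfrac{r\delta(V)\beta_0}{1+\beta_0(r-1)}$. It does not. Writing $g(\tau)=-(\tfrac{n\varphi}{\tau}+\varphi')$, smoothness at $V_0$ forces $g(r-1)=-1$, and the base inequality $r\mu+g(\tau)\ge c\tau$ evaluated at $\tau=r-1$ gives only $c\le\tfrac{r\mu-1}{r-1}$. Letting $\mu\uparrow\beta(V)$ this yields $c\le\tfrac{r\beta(V)-1}{r-1}$, which is \emph{strictly smaller} than $\tfrac{r\beta(V)\beta_0}{1+\beta_0(r-1)}$ whenever $\beta(V)<\tfrac1r+\beta_0(1-\tfrac1r)$, and can even be negative. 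So the smooth construction only reaches the claimed minimum when that minimum already equals $\beta_0$; this is precisely the phenomenon the paper flags in its ``Limiting behavior'' discussion.

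The paper's remedy is to drop smoothness. It solves the fibre ODE with equality, producing an edge metric with cone angle $\beta_1=\beta/\beta_0<1$ along $V_0$ (and $\beta_2=\beta(2\beta_0-1)/\beta_0$ along $V_\infty$); the $V_0$-endpoint constraint then relaxes to $r\mu-\beta_1\ge\beta(r-1)$, which is exactly $\beta\le\tfrac{r\mu\beta_0}{1+\beta_0(r-1)}$. The resulting $\eta$ solves a \emph{twisted} K\"ahler--Einstein edge equation, and one concludes via $\delta(\tilde Y)\ge\delta_\theta(\tilde Y)\ge\beta_\theta(\tilde Y)\ge\beta$ from \cite[Theorem~C]{BBJ18}, the twist $\theta$ packaging both the conical terms $(1-\beta_i)[V_i]$ and the pullback of $(1-\mu)\alpha$ from the twisted equation $\Ric(\omega)=\mu\omega+(1-\mu)\alpha$ on $V$ (not merely the inequality $\Ric(\omega)\ge\mu\omega$). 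You mention the conical metrics in passing but do not actually use them, and you do not invoke the twisted $\delta$-theory; both ingredients are essential precisely in the regime where the first term of the minimum is active.
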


We will use Calabi ansatz to carry out the proof. It is interesting to note that this approach also yields the bound obtained from the algebraic approach.
Before proceeding, we first give some necessary backgrounds for the reader's convenience.

\subsection{Calabi ansatz}
We review a well-studied and powerful construction, pioneered by Calabi \cite{Ca79,Ca82}, which can effectively produce various explicit examples of canonical metrics in K\"ahler geometry. The idea is to work on complex manifolds with certain symmetries so that one can reduce geometric PDEs to simple ODEs. This approach is often referred to as the Calabi ansatz in the literature, which has been studied and generalized to different extent by many authors; see e.g., \cite{HS98} for some general discussions and historical overviews. 

For our purpose, we will work on the total space of line bundles over K\"ahler manifolds. The goal is to construct canonical metrics on this space.
Our computation will follow the exposition in \cite[Section 4.4]{Sze14}.

Let $(V,\omega)$ be an $n$-dimensional compact K\"ahler manifold, where $\omega$ is a K\"ahler form on $V$.
Let $L\rightarrow V$ be a holomorphic line bundle equipped with a smooth Hermitian metric $h$ such that its curvature form $R_h$ satisfies
\begin{equation}
    \label{eq:Rh=lambda-omega}
    R_h:=\idd\log h^{-1}=\lambda\omega
\end{equation}
for some constant $\lambda>0.$
Let 
$$L^{-1}\xrightarrow{\pi}V$$
be the dual bundle of $L$. whose zero section will be denoted by $V_0$ (so $V_0$ is a copy of $V$ sitting inside the total space $L^{-1}$). In the following we will construct a K\"ahler metric on $L^{-1}\backslash\{V_0\}$.

The idea is to make use of the fiberwise norm on $L^{-1}$ induced by $h^{-1}$. We put
$$s(t):=\log||t||^2=\log h^{-1}(t,t),\ \text{for }t\in L^{-1}\backslash\{V_0\}.$$
So $s$ is a globally defined function on $L^{-1}\backslash\{V_0\}$. 
The goal is to construct a K\"ahler metric $\eta$ on $L^{-1}\backslash\{V_0\}$ of the form
\begin{equation}
    \label{}
    \eta=\idd f(s),
\end{equation}
where $f$ is a function to be determined.

We will carry out the computation locally. Choose $p\in V$ and let $\big(U,z=(z_1,...,z_{n})\big)$ be a local coordinate system around $p$ such that $\omega$ can be expressed by a K\"ahler potential:
\begin{equation}
    \label{eq:omega=iddbar-P}
    \omega=\idd(P(z)),
\end{equation}
where
$$P(z)=|z|^2+O(|z|^4).$$
Moreover we may assume that $L^{-1}$ is trivialized over $U$ by a nowhere vanishing holomorphic section $\sigma\in\Gamma(U,L^{-1})$ such that
$$||\sigma||_{h^{-1}}^2=h^{-1}(\sigma,\sigma)=e^{\lambda P(z)}.$$
Under this trivialization, we have an identification:
\begin{equation}
    \label{eq:def-U}
    \pi^{-1}(U)\cong U\times \mathbb{C}.
\end{equation}
Let $w$ be the holomorphic coordinate function in the fiber direction. So we have
\begin{equation}
    \label{eq:s-coord-general}
    s=\log(|w|^2e^{\lambda P(z)})\ \text{on }U\times\mathbb{C}^*.
\end{equation}
Such a choice of coordinates has the advantage that, on the fiber $\pi^{-1}(p)$ over $p$, one has
\begin{equation}
    \label{eq:dP=0-over-p}
    \partial P(z)=\overline{\partial} P(z)=0.
\end{equation}
So direct computation gives
\begin{equation}
    \label{eq:eta-of-general}
    \eta=\idd f(s)=\lambda f^\prime\pi^*\omega+f^{\prime\prime}\frac{\sqrt{-1} dw\wedge d\overline{w}}{|w|^2}.
\end{equation}
over $p$. Thus we get
\begin{equation}
    \label{eq:vol-form-of-general}
    \eta^{n+1}=\frac{(n+1) \lambda^n (f^\prime)^nf^{\prime\prime}}{|w|^2}(\pi^*\omega)^n\wedge\sqrt{-1} dw\wedge d\overline{w}.
\end{equation}
Now observe that this expression of volume form is true not just over $p$. Indeed, if we choose a different trivialization $w^\prime=q(z)w$, the expression \eqref{eq:vol-form-of-general} remains the same. So \eqref{eq:vol-form-of-general} holds everywhere on $U\times\mathbb{C}^*$.

Expression \eqref{eq:eta-of-general} indicates that, to make $\eta$ positively definite, $f$ should be a strictly convex function with $f^\prime>0$. So let us introduce
\begin{equation}
    \label{eq:def-tau-phi-general}
    \tau=f^\prime(s),\ \varphi(\tau)=f^{\prime\prime}(s).
\end{equation}
Then, over $p$, the Ricci form is given by
\begin{equation}
    \label{eq:Ric-of-general}
    \begin{aligned}
    \Ric(\eta)&=-\idd\log\operatorname{det}(\eta)\\
    &=\pi^*\Ric(\omega)-\bigg(n\lambda\frac{\varphi}{\tau}+\lambda\varphi^\prime\bigg)\pi^*\omega\\
    &\ \ \ \ -\varphi\bigg(n\frac{\varphi}{\tau}+\varphi^\prime\bigg)^\prime\frac{\sqrt{-1} dw\wedge d\overline{w}}{|w|^2}.
    \end{aligned}
    \end{equation}
Taking trace, we also get the scalar curvature:
\begin{equation}
    \label{eq:scalar}
    S(\eta)=\frac{r\pi^*S(\omega)}{\tau}-\frac{(\tau^n\varphi)^{\prime\prime}}{\tau^n}.
\end{equation}

Note that there is a natural holomorphic vector field on $L^{-1}$, which can be formulated as
\begin{equation}
    \label{eq:def-v.f.}
    v:=\nabla^{1,0}_\eta\tau.
\end{equation}
In our chosen local coordinates, simply $v=w\frac{\partial}{\partial w}$, which is in fact the vector field generated by the obvious $\bC^*$
-action on the fiber.   
    
\begin{remark}
    It is worth mentioning that, Calabi ansatz also applies to projective bundles of higher ranks (see \cite{HS98} for more general discussions).
\end{remark}

\subsection{Apply to our setting}
Assume further now we are in the setting of Theorem \ref{thm:delta-Y-lower-upper-bound}. with $\omega\in 2\pi c_1(V)$. Then we have
$$
r=\frac{1}{\lambda}.
$$
In what follows, we always assume that
\begin{equation}
    \label{eq:varp=0}
    \varphi(r-1)=\varphi(r+1)=0.
\end{equation}
For simple cohomological reason, this guarantees that $\eta$ extends to $\tilde{Y}$ and lies in $2\pi c_1(\tilde{Y})$ (but possibly singular along $V_0$ or $V_\infty$). If moreover
\begin{equation}
    \label{eq:var-smooth}
    \varphi^\prime(r-1)=1\text{ and }\varphi^\prime(r+1)=-1,
\end{equation}
then $\eta$ will indeed be a smooth K\"ahler form in $2\pi c_1(\tilde{Y})$.

\begin{lemma}
Assume that $\varphi$ satisfies \eqref{eq:varp=0} and \eqref{eq:var-smooth}, then the Futaki invariant $Fut(v)$ satisfies
$$
Fut(v)=\int_{\tilde{Y}}\tau(S(\eta)-n-1)\eta^{n+1}=(2\pi)^{n+1}(\beta_0^{-1}-1)(-K_{\tilde{Y}})^{n+1}.
$$
\end{lemma}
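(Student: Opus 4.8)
The plan is to compute the Futaki invariant $\Fut(v)$ via the Calabi ansatz and evaluate the resulting one-dimensional integrals in closed form, matching them against the defining expression \eqref{eq:def-beta_0} for $\beta_0$. First I would recall the standard fact that for the holomorphic vector field $v=\nabla^{1,0}_\eta\tau$ on a Fano manifold, normalizing so that $\omega\in 2\pi c_1(V)$ and $\eta\in 2\pi c_1(\tilde Y)$, the Futaki invariant can be written as $\Fut(v)=\int_{\tilde Y}\tau\,(S(\eta)-\underline{S})\,\eta^{n+1}$ where $\underline{S}$ is the average scalar curvature; since $\eta\in 2\pi c_1(\tilde Y)$ and $\dim\tilde Y=n+1$, one has $\underline{S}=n+1$. (Strictly speaking, under \eqref{eq:varp=0} alone $\eta$ may be singular along $V_0$, $V_\infty$; I would either impose \eqref{eq:var-smooth} as in the statement so that $\eta$ is a genuine smooth K\"ahler form, or note that the Futaki invariant is a cohomological/intersection-theoretic quantity independent of the representative and so the formula persists.)

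Next I would push the integral down to the fiber. Using the volume form \eqref{eq:vol-form-of-general}, the change of variables $\tau=f'(s)$, $\varphi(\tau)=f''(s)$ (so that $d\tau=\varphi\,ds$ and the fiber integral $\int \tfrac{\sqrt{-1}dw\wedge d\bar w}{|w|^2}$ contributes a constant multiple of $\int \varphi\,ds=\int d\tau$), together with the scalar curvature formula \eqref{eq:scalar}, the integral over $\tilde Y$ collapses to an integral over $\tau\in[r-1,r+1]$. The key cancellation is that $\int_{\tilde Y}\tau\cdot\frac{r\,\pi^*S(\omega)}{\tau}\,\eta^{n+1}$ reduces, via $\int_V S(\omega)\omega^n = \underline{S}(\omega)\int_V\omega^n$ and $\omega\in 2\pi c_1(V)$, to a term proportional to $\int_V(\pi^*\omega)^n$, while the term $-\int\tau\,\frac{(\tau^n\varphi)''}{\tau^n}\tau^n\,d\tau = -\int\tau(\tau^n\varphi)''\,d\tau$ is integrated by parts twice; the boundary terms vanish because of \eqref{eq:varp=0} (and \eqref{eq:var-smooth} controls the first derivatives). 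What survives is, up to the universal constant $(2\pi)^{n+1}$ and a factor $(n+1)$ from $\eta^{n+1}$, an expression of the form $c\big(\int_{r-1}^{r+1}\tau\cdot n\tau^{n-1}\,d\tau - (\text{something})\big)$, i.e. moments of $\tau^n$ over $[r-1,r+1]$.

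Then it is a matter of bookkeeping: one computes $\int_{r-1}^{r+1}\tau^{n}\,d\tau=\frac{(r+1)^{n+1}-(r-1)^{n+1}}{n+1}$ and $\int_{r-1}^{r+1}\tau^{n+1}\,d\tau=\frac{(r+1)^{n+2}-(r-1)^{n+2}}{n+2}$, and identifies the normalizing constant by noting that $(-K_{\tilde Y})^{n+1}=(2\pi)^{-(n+1)}\int_{\tilde Y}\eta^{n+1}$ is itself a multiple of $\int_{r-1}^{r+1}\tau^n\,d\tau$ times $\int_V(\pi^*\omega)^n$. Dividing through, the ratio $\Fut(v)/\big((2\pi)^{n+1}(-K_{\tilde Y})^{n+1}\big)$ becomes exactly $\frac{n+1}{n+2}\cdot\frac{(r+1)^{n+2}-(r-1)^{n+2}}{(r+1)^{n+1}-(r-1)^{n+1}}-(r-1)-1 = \beta_0^{-1}-1$, using \eqref{eq:def-beta_0}. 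I expect the main obstacle to be the careful tracking of constants and the two integrations by parts — making sure the boundary contributions genuinely vanish under \eqref{eq:varp=0}/\eqref{eq:var-smooth} and that the factor relating $\int_V S(\omega)\omega^n$ to $\int_V\omega^n$ (which uses $\underline{S}(\omega)=n$ since $\omega\in 2\pi c_1(V)$, $\dim V=n$) is inserted correctly; everything else is a routine polynomial computation. As a sanity check I would verify the formula in the case $V=\bP^1$, $n=1$, $r=2$, where $\tilde Y=\Bl_1\bP^2$ and $\beta_0=6/7$, recovering the known Futaki invariant.
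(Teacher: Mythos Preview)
Your plan is correct and follows essentially the same route as the paper: reduce the Futaki integral to a one-variable integral in $\tau$ via the Calabi ansatz volume form, use $\int_V S(\omega)\,\omega^n=n\int_V\omega^n$, integrate the $(\tau^n\varphi)''$ term by parts, and compare the resulting moments of $\tau^n$ with the definition of $\beta_0$. One small caution: the boundary terms in the integration by parts do \emph{not} all vanish---after one integration, $[\tau(\tau^n\varphi)']_{r-1}^{r+1}$ survives and, using $\varphi(r\pm1)=0$ and $\varphi'(r\pm1)=\pm1$, contributes $-(r+1)^{n+1}-(r-1)^{n+1}$; this nonzero contribution is exactly what is needed to combine with the other moment integrals and yield $\beta_0^{-1}-1$.
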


\begin{proof}
The first equality is exactly the definition of Futaki's invariant. While the second equality follows from a direct computation. Indeed, using the local coordinates in the previous subsection, we write $w=re^{i\theta}$. Then it is easy to deduce that
$$
\frac{\sqrt{-1}dw\wedge d\bar{w}}{|w|^2}=2\frac{dr}{r}\wedge d\theta=\frac{d\tau}{\varphi}\wedge d\theta.
$$
Thus by \eqref{eq:scalar} and \eqref{eq:vol-form-of-general},
\begin{equation*}
    \int_{\tilde{Y}}\tau(S(\eta)-n-1)\eta^{n+1}=2\pi(n+1)\int_V\int_{r-1}^{r+1}\tau\bigg(\frac{rS(\omega)}{\tau}-\frac{(\tau^n\varphi)^{\prime\prime}}{\tau^n}-n-1\bigg)\frac{\tau^n}{r^n}d\tau\wedge\omega^n.\\
\end{equation*}
Then the result follows from a tedious computation using $\int_VS(\omega)\omega^n=(2\pi)^nn(-K_V)^n$, \eqref{eq:varp=0} and \eqref{eq:var-smooth}.
\end{proof}

As a consequence, we have the following

\begin{lemma}
	\label{lem:V_0-compute-beta_0}
One has
$$
\beta(\tilde{Y})\leq\beta_0.
$$
\end{lemma}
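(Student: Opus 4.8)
The plan is to exploit the general principle that the greatest Ricci lower bound is obstructed by the Futaki invariant of any holomorphic vector field, applied to the fiberwise $\bC^*$-action $v=w\frac{\partial}{\partial w}$ on $\tilde Y$. First I would recall the standard fact (due to Tian, in the form used e.g. in \cite{Tia92,Rub08}) that if $\omega_\beta\in 2\pi c_1(\tilde Y)$ satisfies $\Ric(\omega_\beta)\geq\beta\omega_\beta$, then the modified Futaki invariant associated to $v$ can be estimated; more precisely, one has $(1-\beta)$ times a positive multiple of $(-K_{\tilde Y})^{n+1}$ bounded below by (a constant multiple of) $\Fut(v)$ when $\Fut(v)\geq 0$, giving $\beta\leq 1-\Fut(v)\big/\big((2\pi)^{n+1}(-K_{\tilde Y})^{n+1}\big)$. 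Combined with the previous lemma, which computes exactly $\Fut(v)=(2\pi)^{n+1}(\beta_0^{-1}-1)(-K_{\tilde Y})^{n+1}$, this would give $\beta(\tilde Y)\leq 1-(\beta_0^{-1}-1)$; but since $\beta_0\in(1/2,1)$ by \eqref{eq:beta_0<1} we have $\beta_0^{-1}-1\in(0,1)$, and the clean inequality $\beta(\tilde Y)\leq\beta_0$ should instead come from the sharper form of Tian's obstruction for Ricci lower bounds, which compares $\beta$ directly to $1\big/\big(1+\Fut(v)/((2\pi)^{n+1}(-K_{\tilde Y})^{n+1})\big)=\beta_0$.

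Concretely, the key steps in order would be: (i) observe that $v$ generates a $\bC^*$-action preserving the class $2\pi c_1(\tilde Y)$, so $\tau=f'(s)$ serves as a Hamiltonian/holomorphy potential for $v$ with respect to any Calabi-symmetric form $\eta$ in the class; (ii) invoke the fact that for a continuity-method solution $\Ric(\omega_t)=t\omega_t+(1-t)\theta$ (or simply a form with $\Ric\geq\beta\omega$), one integrates against the holomorphy potential and uses $\int_{\tilde Y}\tau\,\omega^{n+1}$-type identities to get a constraint relating $\beta$ and $\Fut(v)$; (iii) plug in the computed value of $\Fut(v)$ from the preceding lemma and solve for the bound on $\beta$, arriving at $\beta_0$. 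One can phrase (ii) cleanly by noting that the existence of $\omega\in 2\pi c_1(\tilde Y)$ with $\Ric(\omega)\geq\beta\omega$ forces the twisted Futaki invariant $\Fut_\beta(v):=\Fut(v)-(1-\beta)\int_{\tilde Y}\tau(\eta^{n+1}-\text{average})$ to vanish in a way that pins down $\beta\leq\beta_0$; alternatively, and perhaps most transparently, one uses that $\delta(\tilde Y)\leq\beta(\tilde Y)$ cannot exceed $A_{\tilde Y}(V_0)/S_{\tilde Y}(V_0)$, and this last quotient equals $\beta_0$ by the computation already carried out in Lemma \ref{delta0bundle} with $a=b=0$ (there $\beta_{0,0}=\beta_0$).

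The main obstacle I anticipate is making the analytic obstruction step (ii) rigorous with the correct constant: one must be careful that the naive Futaki obstruction gives only $\beta\leq 1$, and extracting the sharp bound $\beta_0<1$ requires the refined estimate involving the holomorphy potential normalization — essentially the observation that along the fiber $\bC^*$-direction the Ricci curvature of any Calabi-symmetric metric is genuinely bounded above, forcing $\varphi$ and hence $\tau$ into a range incompatible with $\beta>\beta_0$. In fact the cleanest route, which I would adopt to avoid delicate analytic estimates, is simply to note that $\beta(\tilde Y)=\min\{1,\delta(\tilde Y)\}\leq\delta_{(\tilde Y)}(\ord_{V_0})=\beta_{0,0}=\beta_0$ by \eqref{eq:beta-X=min-1-delta} and Lemma \ref{delta0bundle}; the Futaki computation in the previous lemma then serves as an independent analytic confirmation, matching the algebraic value and explaining geometrically \emph{why} $V_0$ is the obstructing valuation.
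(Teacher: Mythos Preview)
Your final route---invoking $\beta(\tilde Y)=\min\{1,\delta(\tilde Y)\}$ from \eqref{eq:beta-X=min-1-delta} and then bounding $\delta(\tilde Y)\leq A_{\tilde Y}(V_0)/S_{\tilde Y}(V_0)=\beta_{0,0}=\beta_0$ via Lemma~\ref{delta0bundle}---is correct and gives the inequality immediately. However, this is not what the paper does. The paper stays on the analytic side: it uses the Futaki computation of the preceding lemma and then invokes \cite[Proposition~7]{Sze11}, which bounds $\beta(\tilde Y)$ by looking at the slope of the \emph{twisted Mabuchi energy} along the geodesic ray generated by the $\bC^*$-vector field $v$. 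That proposition packages exactly the ``sharper form of Tian's obstruction'' you were groping for in your step (ii): the slope condition yields directly $\beta(\tilde Y)\leq \big(1+\Fut(v)/((2\pi)^{n+1}(-K_{\tilde Y})^{n+1})\big)^{-1}=\beta_0$, without the incorrect intermediate bound $1-(\beta_0^{-1}-1)$ that you rightly flagged.

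The trade-off: your algebraic route is shorter and self-contained once one grants \eqref{eq:beta-X=min-1-delta} and Lemma~\ref{delta0bundle}, but it imports the deep $\beta=\min\{1,\delta\}$ theorem and the Section~\ref{section:bundle} computation into an appendix whose stated purpose is a \emph{purely analytic} argument. The paper's route keeps the appendix internally analytic and makes transparent that the obstruction is the holomorphic vector field $v$ (equivalently, the product test configuration induced by the fiberwise $\bC^*$-action), with the $\delta$-computation of $V_0$ being the algebraic shadow of the same phenomenon rather than the proof mechanism itself.
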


\begin{proof}
By considering the slope of the twisted Mabuchi energy along the geodesic ray generated by $v$, 
the result follows from \cite[Proposition 7]{Sze11}.
\end{proof}

Now we derive a lower bound for the greatest Ricci lower bound $\beta(\tilde{Y})$. We will follow the approach in \cite[Section 3.1]{Sze11} to construct a family of K\"ahler metrics $\eta\in 2\pi c_1(\tilde{Y})$ with Ricci curvature as positive as possible.
Similar treatment also appears in \cite[Section 3.2]{LL19}.

Recall that $V$ is also a Fano manifold. We fix 
$$\mu\in(0,\beta(V))$$
and choose K\"ahler forms $\omega,\alpha\in2\pi c_1(V)$ such that
\begin{equation}
\label{eq:ric=mu-o+1-mu-alpha}
\Ric(\omega)=\mu\omega+(1-\mu)\alpha.
\end{equation}
Take an ample line bundle $L$ with
$$
L=-\frac{1}{r} K_V,\ \text{for some }r>1.
$$
Then using Calabi ansatz, we can consider K\"ahler metrics $\eta$ on $\tilde{Y}$ of the form (in special local coordinates)
$$
\eta=\frac{\tau}{r}\pi^*\omega+\varphi\frac{\sqrt{-1} dw\wedge d\overline{w}}{|w|^2},
$$
whose Ricci forms are given by 
\begin{equation}
\begin{aligned}
\Ric(\eta)
&=\bigg(\mu-\frac{n\varphi}{r\tau}-\frac{\varphi^\prime}{r}\bigg)\pi^*\omega+(1-\mu)\pi^*\alpha\\
&\ \ \ \ -\varphi\bigg(\frac{n\varphi}{\tau}+\varphi^\prime\bigg)^\prime\frac{\sqrt{-1} dw\wedge d\overline{w}}{|w|^2}.
\end{aligned}
\end{equation}
 Here $\varphi=\varphi(\tau)$ with $\tau\in(r-1,r+1)$ is a one-variable positive function to be determined and $w$ denotes the fiberwise coordinate. To cook up $\eta\in 2\pi c_1(\tilde{Y})$ with $\Ric(\eta)\geq\beta\eta$ (possibly in the current sense), we will impose the following conditions for $\varphi$:
\begin{equation}
\label{eq:bd-cond-for-vp-proj-bundle}
\begin{cases}
\varphi(r-1)=\varphi(r+1)=0,\\
\varphi^\prime(r-1)\in(0,1],\\
\varphi^\prime(r+1)\in[-1,0),\\
\end{cases}
\end{equation}
and
\begin{equation}
\label{eq:ode-ineq-for-vp}
-\bigg(n\frac{\varphi}{\tau}+\varphi^\prime\bigg)^\prime=\beta\text{ for }\tau\in(r-1,r+1),
\end{equation}
where $\beta$ is any constant that satisfies
\begin{equation}
\label{eq:cond-for-beta}
0<\beta\leq\min\bigg\{\frac{\mu\beta_0}{1/r+\beta_0(1-1/r)},\beta_0\bigg\}
\end{equation}
(recall the definition of $\beta_0$ in \eqref{eq:def-beta_0}).

Let us explain the exact meanings of these conditions. The boundary condition \eqref{eq:bd-cond-for-vp-proj-bundle} makes sure that $\eta\in 2\pi c_1(\tilde{Y})$ and $\eta$ possibly possesses certain amount of edge singularities along $V_0$ and $V_\infty$.
Solving the ODE \eqref{eq:ode-ineq-for-vp}, we obtain that
\begin{equation}
\tau^n\varphi=-\frac{\beta}{n+2}\tau^{n+2}+A\tau^{n+1}+B
\end{equation}
where
\begin{equation*}
\begin{cases}
A=\frac{\beta}{n+2}\cdot\frac{(r+1)^{n+2}-(r-1)^{n+2}}{(r+1)^{n+1}-(r-1)^{n+1}},\\
B=\frac{-2\beta}{n+2}\cdot\frac{(r^2-1)^{n+1}}{(r+1)^{n+1}-(r-1)^{n+1}}.\\
\end{cases}
\end{equation*}
From this, we easily derive that
\begin{equation}
\begin{cases}
\beta_1:=\varphi^\prime(r-1)=\frac{\beta}{\beta_0},\\
\beta_2:=-\varphi^\prime(r+1)=\frac{\beta(2\beta_0-1)}{\beta_0}.\\
\end{cases}
\end{equation}
Then \eqref{eq:beta_0<1} and \eqref{eq:cond-for-beta} simply imply that
$$
0<\beta_2<\beta_1\leq1.
$$
So $\eta$ has edge singularities with angles $\beta_1$ and $\beta_2$ along $V_0$ and $V_\infty$ respectively.
Moreover \eqref{eq:cond-for-beta} also guarantees that
\begin{equation}
\begin{split}
\mu-\frac{n\varphi}{r\tau}-\frac{\varphi^\prime}{r}&=\mu-\beta_1/r-\beta(1-1/r-\tau)\\
&=(\mu-\beta/(r\beta_0)-\beta(1-1/r))+\tau\beta\\
&\geq\tau\beta.\\
\end{split}
\end{equation}
Therefore $\eta$ satisfies $\Ric(\eta)\geq\beta\eta$ in the current sense. More precisely, $\eta$ solves the following twisted K\"ahler--Einstein edge equation:
\begin{equation}
\begin{split}
\Ric(\eta)&=\beta\eta+\big(\mu-\beta/(r\beta_0)-\beta(1-1/r)\big)\pi^*\omega+(1-\mu)\pi^*\alpha\\
&\ \ \ \ \ \ \ \ +2\pi(1-\beta/\beta_0)[V_0]+2\pi\big(1-\beta(2\beta_0-1)/\beta_0\big)[V_\infty].
\end{split}
\end{equation}
Then by Proposition \ref{prop:weak-main-thm}, \eqref{eq:beta_0<1}, \eqref{eq:beta-X=min-1-delta} and \cite[Theorem C]{BBJ18},
\begin{equation}
\beta(\tilde{Y})=\delta(\tilde{Y})\geq\delta_\theta(\tilde{Y})\geq\beta_\theta(\tilde{Y})\geq\beta,
\end{equation}
where
$$
\theta=\frac{(\mu-\beta/(r\beta_0)-\beta(1-1/r))}{2\pi}\pi^*\omega+\frac{1-\mu}{2\pi}\pi^*\alpha+(1-\beta_1)[V_0]+(1-\beta_2)[V_\infty]
$$
is a semi-positive current in $(1-\beta)c_1(\tilde{Y})$. Using \eqref{eq:cond-for-beta} and letting 
$\mu\rightarrow\beta(\tilde{Y})$, we obtain
$$
\beta(\tilde{Y})\geq\min\bigg\{\frac{\beta(V)\beta_0}{1/r+\beta_0(1-1/r)},\beta_0\bigg\}.
$$
Finally, using \eqref{eq:beta-X=min-1-delta} again, we get the following

\begin{proposition}
\label{prop:lower-bd}
One has
$$
\delta(\tilde{Y})\geq\min\bigg\{\frac{\delta(V)\beta_0}{1/r+\beta_0(1-1/r)},\beta_0\bigg\}.
$$
\end{proposition}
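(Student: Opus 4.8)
The plan is to read the bound off directly from the family of twisted K\"ahler--Einstein edge metrics built via the Calabi ansatz in the preceding subsection, feeding it into the twisted analogue of the identity \eqref{eq:beta-X=min-1-delta}. No new geometric input is required: the construction of those metrics has already done the heavy lifting, and what remains is a limiting argument together with an elementary manipulation of $\beta_0$.

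Concretely, I would argue as follows. Recall that for every $\mu\in(0,\beta(V))$ and every constant $\beta$ lying in the range \eqref{eq:cond-for-beta}, the Calabi ansatz furnishes a K\"ahler metric $\eta\in2\pi c_1(\tY)$, smooth away from $V_0\cup V_\infty$ and carrying cone angles $2\pi\beta_1$ and $2\pi\beta_2$ along $V_0$ and $V_\infty$ with $0<\beta_2<\beta_1\le1$, solving a twisted K\"ahler--Einstein edge equation $\Ric(\eta)=\beta\eta+\theta$ where $\theta$ is a closed semi-positive $(1,1)$-current in $(1-\beta)c_1(\tY)$; this is exactly the content of \eqref{eq:bd-cond-for-vp-proj-bundle}--\eqref{eq:cond-for-beta} and the ensuing formulas for $\beta_1,\beta_2$. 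The existence of such an $\eta$ gives $\beta_\theta(\tY)\ge\beta$, and by the twisted version of \eqref{eq:beta-X=min-1-delta} (\cite[Theorem C]{BBJ18}) one has the chain $\delta(\tY)\ge\delta_\theta(\tY)\ge\beta_\theta(\tY)\ge\beta$, where the first inequality uses the semi-positivity of $\theta$. Hence $\delta(\tY)\ge\beta$ for every admissible $\beta$, and letting $\beta$ tend to the right-hand side of \eqref{eq:cond-for-beta} and then $\mu\to\beta(V)$ yields
$$\delta(\tY)\ge\min\bigg\{\frac{\beta(V)\beta_0}{1/r+\beta_0(1-1/r)},\beta_0\bigg\}.$$
Finally, to replace $\beta(V)$ by $\delta(V)$: since $\beta_0\in(1/2,1)$ and $1/r+\beta_0(1-1/r)<1$ for $r>1$, the first entry above exceeds $\beta_0$ whenever $\delta(V)\ge1$, so both sides of the asserted inequality then equal $\beta_0$; and if $\delta(V)<1$ then $\beta(V)=\min\{1,\delta(V)\}=\delta(V)$ by \eqref{eq:beta-X=min-1-delta}. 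Either way the claimed bound follows.

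The genuine difficulty is not in this final bookkeeping but in the construction it relies on: one must choose the profile $\varphi$ solving the ODE \eqref{eq:ode-ineq-for-vp} so that, simultaneously, $\eta$ is positive definite, the two edge angles $\beta_1,\beta_2$ stay in $(0,1]$, and the curvature defect $\mu-n\varphi/(r\tau)-\varphi'/r-\tau\beta$ remains nonnegative. These three requirements are precisely what cut out the admissible interval \eqref{eq:cond-for-beta} for $\beta$, and verifying them proceeds through the explicit solution $\tau^n\varphi=-\tfrac{\beta}{n+2}\tau^{n+2}+A\tau^{n+1}+B$ and the resulting identities $\beta_1=\beta/\beta_0$ and $\beta_2=\beta(2\beta_0-1)/\beta_0$. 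Once that interval is pinned down, Proposition \ref{prop:lower-bd} is immediate.
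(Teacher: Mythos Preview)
Your proposal is correct and follows essentially the same route as the paper: feed the twisted K\"ahler--Einstein edge metric produced by the Calabi ansatz into the chain $\delta(\tY)\ge\delta_\theta(\tY)\ge\beta_\theta(\tY)\ge\beta$ via \cite[Theorem C]{BBJ18}, let $\mu\to\beta(V)$, and then convert $\beta(V)$ to $\delta(V)$ using \eqref{eq:beta-X=min-1-delta}. The paper's argument is identical up to presentation (it records the equality $\beta(\tY)=\delta(\tY)$ at the front of the chain, using the already-established upper bound $\delta(\tY)\le\beta_0<1$, whereas you start directly from $\delta(\tY)$; and it is terser in the final $\beta(V)\to\delta(V)$ step, which you spell out by cases). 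One tiny imprecision: when $\delta(V)\ge1$ you write that ``both sides of the asserted inequality then equal $\beta_0$'', but at this stage you only know the left side is $\ge\beta_0$ while the right side equals $\beta_0$; that is of course all you need.
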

So Proposition \ref{prop:weak-main-thm} is proved.

% \begin{remark}
%     In next section we will give a direct and purely algebraic proof of this, using the fact that $\delta_T(\tilde{Y})=\delta(\tilde{Y})$ (cf. \cite{BJ20,Golota19}). Here $T=\bC^*$ acts naturally on the fibers. So it suffices to investigate $T$-invariant divisor over $\tilde{Y}$. And we will show that the inequality in Proposition \ref{prop:lower-bd} is actually an equality, hence finishing the proof of Theorem \ref{thm:delta-Y-lower-upper-bound}.
% \end{remark}

\subsection{Limiting behavior}
Now we briefly study the degeneration of metrics on $\tilde{Y}$ with positive Ricci curvature as they approach the threshold.

	Suppose that $V$ admits a KE metric $\omega_{KE}\in 2\pi c_1(V)$. (In this case $\beta(\tilde{Y})=\beta_0$ by Theorem \ref{thm:delta-Y-lower-upper-bound}). Then as in \cite[Section 3.1]{Sze11}, for any $\beta\in(0,\beta_0)$ we can construct a \emph{smooth} K\"ahler form $\omega_\beta\in 2\pi c_1(\tilde{Y})$ with
	$\Ric(\omega_\beta)>\beta\omega_\beta$ such that, as $\beta\rightarrow\beta_0$, one has
	$
	(\tilde{Y},\omega_\beta)\xrightarrow{G.H.}(\tilde{Y},\eta),
	$
	with $\eta$ solving
	$$
	\Ric(\eta)=\beta_0\eta+(1-1/r-\beta_0(1-1/r))\pi^*\omega_{KE}+2\pi(2-2\beta_0)[V_\infty].
	$$
	In particular the limit space is still $\tilde{Y}$.
	This generalizes \cite{Sze11}, where an $\eta$ satisfying
	$$
	\Ric(\eta)=\frac{6}{7}\eta+\frac{1}{7}\pi^*\omega_{FS}+2\pi(1-\frac{5}{7})[V_\infty]
	$$
	was constructed on $Bl_1\bP^2$.
	
	Suppose in general that $V$ does not necessarily admit KE, but $\beta(V)>1/r+\beta_0(1-1/r)$. (In this case again $\beta(\tilde{Y})=\beta_0$ by Theorem \ref{thm:delta-Y-lower-upper-bound}). We choose $\mu\in(1/r+\beta_0(1-1/r),\beta(V))$ and hence there are K\"ahler forms $\omega, \alpha\in 2\pi c_1(V)$ satisfying \eqref{eq:ric=mu-o+1-mu-alpha}. Then the same construction as in \cite[Section 3.1]{Sze11} shows that, for any $\beta\in(0,\beta_0)$ there is a \emph{smooth} K\"ahler form $\omega_\beta\in 2\pi c_1(\tilde{Y})$ with
	$\Ric(\omega_\beta)>\beta\omega_\beta$ such that, as $\beta\rightarrow\beta_0$, one has
	$
	(\tilde{Y},\omega_\beta)\xrightarrow{G.H.}(\tilde{Y},\eta),
	$
	with $\eta$ solving
	$$
	\Ric(\eta)=\beta_0\eta+(\mu-1/r-\beta_0(1-1/r))\pi^*\omega+(1-\mu)\pi^*\alpha+2\pi(2-2\beta_0)[V_\infty].
	$$
	So as in the previous case, the limit space is still $\tilde{Y}$ itself. Note that the limit metric $\eta$ is not unique (as $\mu$, $\omega$ and $\alpha$ are allowed to vary).
	
	As we have noted in the above two cases, the limit space is still $\tilde{Y}$ itself. So in the view of \cite{BLZ19}, the optimal destabilization of $\tilde{Y}$ should be a product (but non-trivial) test configuration. In the toric case, similar phenomena also appeared in \cite{Li11}. We actually expect that the optimal destabilization of a toric Fano variety is always itself.

Finally, suppose that $\beta(V)\leq1/r+\beta_0(1-1/r)$. Then by Theorem \ref{thm:delta-Y-lower-upper-bound}, $\beta(\tilde{Y})=\frac{\delta(V)\beta_0)}{1/r+\beta_0(1-1/r)}$. This case turns out to be more subtle. Firstly, it seems that the Calabi ansatz does not easily provide \emph{smooth} K\"ahler forms $\omega_\beta$ such that $\Ric(\omega)\geq\beta\omega_\beta$ as $\beta\rightarrow\beta(\tilde{Y})$. Secondly, as $\mu\rightarrow\beta(X)$, the K\"ahler form $\omega$ we chose from the base $V$ (recall \eqref{eq:ric=mu-o+1-mu-alpha}) is supposed to develop certain singularities, which suggests that $V$ itself would degenerate in the Gromov--Hausdorff topology to some other $\bQ$-Fano variety. So at this stage it is unclear how $\tilde{Y}$ would degenerate. We leave this case to future studies.
\end{appendix}

\bibliography{reference.bib}
\end{document}